\documentclass[11pt,reqno]{amsart}
%\usepackage{transport}
%\ProvidesPackage{transport}
\usepackage{amsmath,amssymb,amscd,graphicx, amsthm, %fullpage,
  enumerate, hyperref} 
\usepackage{mathrsfs}
\usepackage{bbm}
\usepackage{mathtools}
\usepackage[all]{xy}
\usepackage{url}
\usepackage[margin=2cm]{geometry}
\usepackage{color}
\definecolor{darkred}{rgb}{1,0,0} % change the intensity in [0,1]
\definecolor{darkgreen}{rgb}{0,0.8,0}
\definecolor{darkblue}{rgb}{0,0,1}

\hypersetup{colorlinks,
linkcolor=darkblue,
filecolor=darkgreen,
urlcolor=darkred,
citecolor=darkgreen}

% \newcommand{\aha}[2][0.8]{\vspace{1 mm}\par \noindent
%  \framebox{\begin{minipage}[c]{#1 \textwidth}  #2
% \end{minipage}}\vspace{1 mm}\par}

%%%%%%%%%%%%%%%%%%%%%%%%%%%%%%%%%%%%%%%
%%%%%%%%%%%%%%%%%%%%%%%%%%%%%%
% %   TEXT FORMATTING
% \setlength{\smallskipamount}{6pt}
% \setlength{\medskipamount}{10pt}
% \setlength{\bigskipamount}{16pt}

% %%%%%%%%%%%%%%%%%%%%%%%%%%%%%%

\numberwithin{equation}{section}

\theoremstyle{definition}
\newtheorem{theorem}{Theorem}
\numberwithin{theorem}{section}

\newtheorem{lemma}[theorem]{Lemma}

\newtheorem{corollary}[theorem]{Corollary}

\newtheorem{definition}[theorem]{Definition}
\newtheorem{remark}[theorem]{Remark}

\newtheorem{notation}[theorem]{Notation}

\theoremstyle{remark}

\newcommand{\inv}{^{-1}} 

\newcommand{\toto}{\rightrightarrows}

\newcommand{\id}{\mathrm{id}}
\newcommand{\bone}{\mathbf{1}}

\newcommand{\ba}{\mathbf{a}}
\newcommand{\ha}{\hat{\ba}}
\newcommand{\GL}{\mathrm{GL}}

\newcommand{\Aut}{\mathrm{Aut}}

\newcommand{\Hom}{\mathsf{Hom}}
%\newcommand{\rank}{\mathsf{rank}}
%%%%%%%%%%%%%%%%%%%%%%%%%%%%%%%%%%%%%%%%%%%%%%%%%%%%%%%%%%%%%%%%%%%%%%%%%%%%%%%%
% categories
%%%%%%%%%%%%%%%%%%%%%%%%%%%%%%%%%%%%%%%%%%%%%%%%%%%%%%%%%%%%%%%%%%%%%%%%%%%%%%%%
\newcommand{\sfC}{\mathsf{C}}
\newcommand{\sfA}{\mathsf{A}}
\newcommand{\sfB}{\mathsf{B}}
\newcommand{\sfD}{\mathsf{D}}
\newcommand{\Man}{\mathsf{{Man}}}
\newcommand{\Lie}{\mathsf{{Lie}}}

\newcommand{\LietAlg}{\mathsf{Lie2Alg}}
\newcommand{\LieGp}{\mathsf{LieGp}}
\newcommand{\LieGpd}{\mathsf{LieGpd}}

\newcommand{\Vect}{\mathsf{Vect}}
%\newcommand{\Aut}{\mathsf{Aut}}

%%%%%%%%%%%%%%%%%%%%%%%%%%%%%%%%%%%%%%
%  caligraphic letters
%%%%%%%%%%%%%%%%%%%%%%%%%%%%%%%%%%%%%%

\newcommand{\cL}{{\mathcal L}}
\newcommand{\calX}{{\mathcal X}}

%%%%%%%%%%%%%%%%%%%%%%%%%%%%%%%%%%%%%%

%%%%%%%%%%%%%%%%%%%%%%%%%%%%%%%%%%%%%
% Ralph Smith’s Formal Script Font (rsfs): Use the “mathrsfs” package.
% \usepackage{mathrsfs}
%%%%%%%%%%%%%%%%%%%%%%%%%%%%%%%%%

\newcommand{\scrL}{{\mathscr L}}
% fraktur
%%%%%%%%%%%%%%%%%%%%%%%%%%%%%%%%%%%%%%

\newcommand{\fg}{\mathfrak g}

\newcommand{\fh}{\mathfrak h}

%%%%%%%%%%%%%%%%%%%%%%%%%%%%%%%%%%%%%%
% blackboard bold
%%%%%%%%%%%%%%%%%%%%%%%%%%%%%%%%%%%%%%
\newcommand{\X}{\mathbb{X}}
\newcommand{\R}{\mathbb{R}}

%%%%%%%%%%%%%%%%%%%%%%%%%%%%%%%%%%%%%%
% \mathbbm
%%%%%%%%%%%%%%%%%%%%%%%%%%%%%%%%%%%%%%

\newcommand{\bbm}[1]{\mathbbm{#1}}

%\newcommand{\bb1}{\bbm{1}}
%\newcommand{\bbs}{\bbm{s}}

%%%%%%%%%%%%%%%%%%%%%%%%%%%%%%
\usepackage{longtable}

%%%%%%%%%%%%%%%%%%%%%%%%%%%%%%

\title{Left-invariant vector fields on a Lie 2-group}

\author{Eugene Lerman }

\address{Department of Mathematics, University of Illinois, Urbana,
  IL 61801}

%\thanks{typeset \today}
\begin{document}

\begin{abstract}
  A Lie 2-group $G$ is a category internal to the category of Lie
  groups.  Consequently it is a monoidal category and a Lie groupoid.
  The Lie groupoid structure on $G$ gives rise to the Lie 2-algebra
  $\X(G)$ of multiplicative vector fields, see
  \cite{B-E_L}.  The monoidal structure on $G$ gives rise to a left
  action of the 2-group $G$ on the Lie groupoid $G$, hence to an
  action of $G$ on the Lie 2-algebra $\X(G)$.  As a result we get the
  Lie 2-algebra $\X(G)^G$ of left-invariant multiplicative vector
  fields.   

  On the other hand there is a well-known construction that associates
  a Lie 2-algebra $\fg$ to a Lie 2-group $G$: apply the functor $\Lie:
  \mathsf{LieGp} \to \mathsf{LieAlg}$ to the structure maps
  of the category $G$.  
  We show that the Lie 2-algebra $\fg$ is isomorphic to the Lie
  2-algebra $\X(G)^G$ of left invariant multiplicative vector fields.
\end{abstract}

\maketitle
\setcounter{tocdepth}{1}
\tableofcontents

%%%%%%%%%%%%%%%%%%%%%%%%%%%%%
\section{Introduction}  \label{sec:1}
%%%%%%%%%%%%%%%%%%%%%%%%%%%%%

% Recall that given a category $\sfC$ with finite limits we can talk
% about  a {\em category $C$ internal to the category $\sfC$}.  Such $C$
% would consist of two objects $C_0$, $C_1$ of $\sfC$ (object and
% morphisms, respectively) and 

Recall that a strict Lie 2-group $G$ is a category internal to the
category $\LieGp$ of Lie groups (the notions of internal categories,
functors and natural transformations are recalled in
Definition~\ref{def:internal}).  Thus $G$ is a category whose collection of
objects is a Lie group $G_0$, the collection of morphisms is a Lie
group $G_1$ and all the structure maps: source $s$, target $t$, unit
$1:G_0 \to G_1$ and composition $*:G_1\times_{s,G_0,t} G_1 \to G_1$
are maps of Lie groups.

There is a well-known functor $\Lie:\mathsf{LieGp} \to \mathsf{LieAlg}$ from the
category of Lie groups to the category of Lie algebras.  The functor
$\Lie$ assigns to a Lie group $H$ its tangent space at the identity
$\mathfrak{h}=T_eH$. The Lie bracket on $\fh$ is defined by the
identification of $T_eH$ with the Lie algebra of left-invariant vector
fields on the Lie group $H$.  To a map $f:H\to L$ of Lie groups the
functor $\Lie$ assigns the differential $T_ef: T_eH \to T_e L$, which
happens to be a Lie algebra map.  Consequently given a Lie 2-group
$G=\{G_1 \toto G_0\}$ we can apply the functor $\Lie$ to all the
structure maps of $G$ and obtain a (strict) Lie 2-algebra $\fg=
\{\fg_1\toto \fg_0\}$.

On the other hand, any Lie 2-group happens to be a Lie groupoid.  In fact, it is
an action groupoid \cite[Proposition~32]{BL} (see also
Corollary~\ref{cor:2.7} below). Hepworth in
\cite{Hepworth} pointed out that any Lie groupoid $K$ possesses a
{\em category} $\X(K)$ of vector fields (and not just a vector space).  The objects of this category are
well-known multiplicative vector fields of Mackenzie and Xu
\cite{MackXu}. Multiplicative vector fields on a Lie groupoid
naturally form a Lie algebra.  It was shown in \cite{B-E_L} that the
space of morphisms of $\X(K)$ is a Lie algebra as well, and moreover
$\X(K)$ is a strict Lie 2-algebra (that is, a category internal to Lie
algebras).  One may expect that for a Lie 2-group $G$ one can define
the Lie 2-algebra $\X(G)^G$ of left-invariant vector fields on $G$ and
that this Lie 2-algebra is isomorphic to the Lie 2-algebra $\fg$.  But
what does it mean for a Lie 2-group to act on its Lie 2-algebra?  And what
does it mean to be left-invariant for such an action?  We proceed by
analogy
with ordinary Lie groups.\\

A Lie group $H$ acts on itself by left multiplication: for any $x\in
H$ we have a diffeomorphism $L_x:H\to H$, $L_x (a) := xa$.  These
diffeomorphisms, in turn, give rise to a representation
\[
\lambda:H\to \mathrm{GL}(\calX (H))
\]
of the group $H$ on the vector space $\calX (H)$ of vector fields on
the Lie group $H$.  Namely, for each $x\in H$, the linear map
$\lambda(x): \calX (H) \to \calX(H)$ is defined by
\[
\lambda (x)v:= TL_x \circ v \circ L_{x\inv}
\]
for all vector fields $v\in \calX(H)$.  
Next recall that given a representation $\rho: H\to \mathrm{GL}(V)$ of
a Lie group $H$ on a vector space $V$ the space $V^H$ of $H$-fixed
vectors is usually defined by
\[
V^H =\{v\in V\mid \rho(x) v = v \textrm{ for all } x\in H\}.
\]
The space $V^H$ has the following universal property: for any linear
map $f:W\to V$ (where $W$ is some vector space) so that
\[
\rho(x) \circ f = f
\]
for all $x\in H$, there is a unique linear map $\bar{f}:W\to V^H$ so
that the diagram
\[
\xy
(0,12)*+{W}="1";
(-12, -5)*+{V^H}="2";
( 12, -5)*+{V}="3";
{\ar@{->}_{\bar{f}} "1"; "2"};
{\ar@{->}^{f} "1"; "3"};
{\ar@{->}^{\imath} "2"; "3"};
\endxy
\]
commutes.  Here $\imath:V^H \hookrightarrow V$ is the inclusion map.
If we view the group $H$ as a category $BH$ with one object $*$ and
$\Hom_{BH}(*,*) = H$, then the representation $\rho:H\to
\mathrm{GL}(V)$ can be viewed as the functor $\rho:BH\to \Vect$ (where
$\Vect$ is the category of vector spaces and linear maps) with
$\rho(*) =V$.  From this point of view the vector space $V^H$ of
$H$-fixed vectors ``is'' the limit of the functor $\rho$:
\[
V^H = \lim(\rho:BH\to \Vect).
\]
Consequently the vector space $\calX(H)^H$ of left-invariant vector
fields on a Lie group $H$ is the limit of the functor $\lambda: BH\to
\Vect$ with $\lambda(*) = \calX(H)$ and $\lambda (x) v = TL_x \circ v
\circ L_{x\inv}$ for all $x\in H, v\in \calX(H)$:
\[
\calX(H)^H = \lim (\lambda: BH\to \Vect).
\]
\mbox{}\\

Now consider a Lie 2-group $G$. Each object $x$ of $G$
gives rise to a functor $L_x:G\to G$ which is given on an arrow
$b\xleftarrow{\sigma}a$ of $G$ by
\[
L_x (b\xleftarrow{\sigma}a) = x\cdot b\xleftarrow{1_x\cdot \sigma} x\cdot a.  
\]
Here $\cdot $ denotes both multiplications:
in the group $G_0$ and in
the group $G_1$.  The symbol $1_x$ stands for the identity arrow at
the object $x$.  For any arrow $y\xleftarrow{\gamma} x$ of $G$ there is
a natural transformation
\[
L_\gamma:L_x \Rightarrow L_y.
\]
The component of $L_\gamma$ at an object $a$ of $G$ is defined by 
\[
L_\gamma (a) = y\cdot a \xleftarrow{\gamma \cdot 1_a} x \cdot a.
\]
  The proof that $L_\gamma$ is in fact a natural
transformation is not completely trivial; see Lemma~\ref{lem:2.2.0}.

Next recall that there is a tangent (2-)functor $T:\LieGpd\to \LieGpd$
from the category of Lie groupoids to itself.  This functor is an
extension of the tangent functor $T:\Man\to \Man$ on the category of
manifolds. On objects $T$ assigns to a Lie groupoid $K$ its tangent
groupoid $TK$. On morphisms $T$ assigns to a functor $f:K\to K'$ the
derivative $Tf:TK\to TK'$.  To a natural transformation
$\alpha:f\Rightarrow f'$ between two functors $f,f':K\to K'$ the
functor $T$ assigns the derivative $T\alpha$ (note that a natural transformation $\alpha$ is, in particular,  a
smooth map $\alpha:K_0 \to K'_1$, so $T\alpha: TK_0\to TK'_1$ makes
sense).  Note also that the projection functors $\pi_K:TK\to K$
assemble into a (2-)natural transformation $\pi:T\Rightarrow
\id_\LieGpd$.

Given an object $x$ of a Lie 2-group $G$ there is a functor $\lambda(x):\X(G)\to
\X(G)$ from the category of vector fields on the Lie groupoid $G$  to itself (see Lemma~\ref{lem:2.2.1} and the discussion right after it).  It is defined as follows: given a multiplicative vector field $v:G\to TG$,  the value of $\lambda(x)$ on $v$ is given by 
\[
\lambda(x) \, (v) :=  TL_x \circ v \circ L_{x\inv}.
\]
The value of $\lambda(x)$  on a morphism $\alpha:v\Rightarrow
w$ (i.e., on a natural transformation between the two functors) of $\X(G)$ is     the composite
\[
\xy
(-20,0)*+{TG}="1";
(-5,0)*+{TG}="2";
( 10,0)*+{G}="3";
(25,0)*+{G}="4";
{\ar@{->}_{TL_x} "2";"1"};
{\ar@{->}_{L_{x\inv}} "4";"3"};
{\ar@/^1.pc/^{w} "3";"2"};
{\ar@/_1.pc/_{v} "3";"2"};
{\ar@{=>}^<<<{\scriptstyle \alpha} (3,3)*{};(3,-3)*{}} 
\endxy.
\]
That is, 
\[
\lambda (x)\, (\alpha) :=TL_x\, \alpha\,  L_{x\inv},
\] 
the whiskering of the natural transformation $\alpha$ by the functors
$TL_x$ and $L_{x\inv}$.
Note that $\lambda(x)\circ \lambda (x\inv) = \id_{\X(G)}=
\lambda(x\inv)\circ \lambda(x)$.  And, more generally,
$\lambda(x)\circ \lambda(y) = \lambda(x\cdot y)$ for all objects $x,y$
of the Lie 2-group $G$.
For any arrow $y\xleftarrow{\gamma}x$ in the category $G$ we have a
natural transformation $\lambda(\gamma):\lambda(x)\Rightarrow
\lambda(y)$: its component 
\[
 \lambda(\gamma)v:\lambda(x)v\Rightarrow \lambda(y)v
\]
at a multiplicative vector field $v$  is given by the composite
\[
\xy
(-25,0)*+{TG}="1";
(-5,0)*+{\,\,TG}="2";
( 10,0)*+{G}="3";
(30,0)*+{G}="4";
{\ar@{->}_{v} "3";"2"};
{\ar@/^1.pc/^{TL_x} "2";"1"};
{\ar@/_1.pc/_{TL_y} "2";"1"};
{\ar@/^1.pc/^{L_{x\inv}} "4";"3"};
{\ar@/_1.pc/_{L_{y\inv}} "4";"3"};
{\ar@{=>}^<<<<{\scriptstyle TL_\gamma} (-15,3)*{};(-15,-3)*{}}; 
{\ar@{=>}^<<<<{\scriptstyle L_{\gamma\inv}} (19,3)*{};(19,-3)*{}}; 
\endxy.
\]
We can always think of a Lie 2-algebra $\X(G)$ as a 2-vector space
(i.e., a category internal to the category of vector spaces) by
forgetting the Lie brackets.  A 2-vector space has a strict 2-group of
automorphisms.  By definition the objects of this 2-group are strictly
invertible functors internal to the category of vector spaces and the
morphisms are natural isomorphisms (also internal to the category of
vector spaces). We denote the 2-group of automorphisms of $\X(G)$ by
$\GL(\X(G))$.  The functors $\lambda(x)$ and the natural
transformations $\lambda(\gamma)$ described above assemble into a
single homomorphism of 2-groups $\lambda: G\to \GL(\X(G))$ (i.e., a
functor internal to the category of groups), which we can think of as
the ``left regular representation" of the Lie 2-group $G$ on its
category of vector fields $\X(G)$.  The main result of the paper may
now be stated as follows.

\begin{theorem} \label{thm:main} Let $G$ be a (strict) Lie 2-group,
  $\fg$ its Lie 2-algebra obtained by applying the $\Lie$ functor to
  its structure maps, $\X(G)$ the Lie 2-algebra of multiplicative
  vector fields, and $\lambda: G\to \GL(\X(G))$ the representation of
  $G$ on the 2-vector space $\X(G)$ of multiplicative vector fields
  which arises from the left multiplication as described above.  There
  is a natural 1-morphism $p:\fg \to \X(G)$ of Lie 2-algebras which is
  fully faithful and injective on objects.  Hence the image $p(\fg)$
  of the functor $p$ is a full Lie 2-subalgebra of $\X(G)$.

  Moreover the inclusion $p(\fg)\lhook\joinrel\xrightarrow{\,\,\,
    i\,\,\,}\X(G)$ is the strict conical 2-limit of the functor
  $\lambda: G\to \mathrm{GL}(\X(G))$.  Hence the Lie 2-algebra $\fg$
  is isomorphic to the Lie 2-algebra $\X(G)^G:= \lim(\lambda :G\to
  \mathrm{GL}(\X(G)))$ of left-invariant vector fields on the Lie
  2-group $G$.
\end{theorem}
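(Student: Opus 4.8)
The plan is to imitate the $1$-group case from the introduction: first build the functor $p\colon\fg\to\X(G)$ by left-translating tangent vectors at the identity, then recognize its image, together with the inclusion $i\colon p(\fg)\hookrightarrow\X(G)$, as the limiting $2$-cone of $\lambda$. All three structures on $G$ (the two group structures on $G_0,G_1$, the groupoid structure, and the monoidal/action structure) will be used, but the only non-formal input is the universal property.

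\textbf{Step 1: construction of $p$.} On objects I would send $\xi\in\fg_0=T_eG_0$ to the pair $p_0(\xi):=(\xi^L,(T_e\bone\,\xi)^L)$ consisting of the left-invariant vector field $\xi^L$ on the group $G_0$ and the left-invariant vector field on the group $G_1$ generated by $T_e\bone\,\xi\in\fg_1$, where $\bone\colon G_0\to G_1$ is the unit. On morphisms I would send $\eta\in\fg_1$ to the natural transformation $p_1(\eta)$ whose component at $a\in G_0$ is $\eta^L(\bone(a))\in T_{\bone(a)}G_1$, i.e. $p_1(\eta)=\eta^L\circ\bone$. First one checks that these data really are an object and a morphism of $\X(G)$ (as recalled from \cite{B-E_L}): this is the compatibility of $p_0(\xi)$ with the source, target, unit and composition of the groupoid $G$ and the analogous compatibilities for $p_1(\eta)$, all of which follow from the single observation that a Lie group homomorphism $f\colon H\to L$ intertwines $\zeta^L$ and $(T_ef\,\zeta)^L$, applied to $f=s,t,\bone,*$ and to the group structure on $G_1\times_{G_0}G_1$. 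Next one checks that $p=(p_1,p_0)$ is a functor internal to Lie algebras: functoriality is the compatibility of $p_0,p_1$ with source/target/unit/composition of $\fg$ and $\X(G)$, and the bracket is preserved because each $\zeta\mapsto\zeta^L$ is a Lie-algebra homomorphism $\fg_i\to\calX(G_i)$ and $T_e\bone$ is a Lie-algebra homomorphism, while the Lie $2$-algebra structure of $\X(G)$ is assembled from these brackets. Finally $p$ is injective on objects and fully faithful: injectivity and faithfulness are immediate by evaluating a left-invariant field (resp.\ a left-invariant natural transformation) at the identity, and fullness amounts to identifying, via the explicit description of the morphisms of $\X(G)$, each natural transformation $p_0(\xi)\Rightarrow p_0(\xi')$ with the unique $\eta\in\fg_1$ lying over $(\xi,\xi')$; this is the one place in Step 1 needing genuine bookkeeping. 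The construction is manifestly natural in $G$.

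\textbf{Step 2: the $2$-cone.} Next I would show that $\lambda(x)\circ i=i$ on the nose for every object $x$ of $G$, and that $\lambda(\gamma)$ acts as the identity $2$-cell on $p(\fg)$ for every arrow $\gamma$. The first is immediate from Step 1: the object-component $\xi^L$ of $p_0(\xi)$ is left-invariant on $G_0$, so $TL_x\circ\xi^L\circ L_{x\inv}=\xi^L$, and its arrow-component $(T_e\bone\,\xi)^L$ is in particular invariant under left translation by $\bone(x)\in G_1$, which is exactly the action of $\lambda(x)$ on it; the same holds for $p_1(\eta)$. For the second, one evaluates the whiskered composite defining $\lambda(\gamma)_{v}$ (displayed just before Theorem~\ref{thm:main}) at an object and uses the left-invariance of both components of $v=p_0(\xi)$; this collapses the composite to an identity, the only delicate point being that one must keep track of the two distinct inverse operations on $G_1$ (the group inverse and the groupoid inverse) and of $L_\gamma$ versus $TL_\gamma$. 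Consequently $i\colon p(\fg)\hookrightarrow\X(G)$ together with the identity $2$-cells $\lambda(x)\circ i\Rightarrow i$ is a strict $2$-cone over $\lambda\colon G\to\GL(\X(G))$.

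\textbf{Step 3: universality, and conclusion.} It remains to prove this $2$-cone is terminal. A $2$-cone with vertex a category $C$ is a functor $f\colon C\to\X(G)$ together with a coherent family of natural isomorphisms $\nu^x\colon\lambda(x)\circ f\Rightarrow f$ ($x\in G_0$) compatible with composition of objects of $G$, with the unit, and with the $2$-cells $\lambda(\gamma)$. Given such a datum I would prove that for each object $c\in C$ the multiplicative vector field $f(c)$ is forced by the equivariance $\{\nu^x_c\}$ and its compatibility with the $\lambda(\gamma)$ to be exactly $p_0(\xi_c)$ for $\xi_c:=(f(c))_0(e)\in\fg_0$, and that each $\nu^x_c$ is forced to be the identity; likewise, $f(u)$ for a morphism $u$ of $C$ is forced to be $p_1(\eta_u)$ for a unique $\eta_u\in\fg_1$. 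This yields the unique factorization $\bar f\colon C\to p(\fg)$, $c\mapsto\xi_c$, $u\mapsto\eta_u$, with $i\circ\bar f=f$ and $\bar f$ compatible with the $2$-cone structures; functoriality and uniqueness of $\bar f$ follow from those of $f$ and the faithfulness and injectivity on objects of $p$. Hence $i$ is the strict conical $2$-limit, $\X(G)^G:=\lim(\lambda\colon G\to\GL(\X(G)))$ is (the domain of the limiting projection, namely) $p(\fg)$, and $p\colon\fg\xrightarrow{\ \sim\ }p(\fg)$ is an isomorphism of Lie $2$-algebras by Step 1; this proves the last assertion of the theorem.

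\textbf{Main obstacle.} The crux is the rigidity claim inside Step 3: that a ``homotopy-fixed-point'' structure on a multiplicative vector field — invariance up to coherent isomorphism under all $\lambda(x)$, compatibly with the arrow-level data $\lambda(\gamma)$ — necessarily arises from an honest left-invariant field in the image of $p$ and has identity structure isomorphisms. This is precisely where the $2$-dimensionality of the problem is essential: one must exploit naturality in the arrow variable $\gamma\in G_1$, not just in the object variable $x\in G_0$, together with the explicit description of the morphisms of $\X(G)$ and of the $2$-cells $\lambda(\gamma)$ from Step 2. A secondary, purely computational obstacle, already present in Steps 1--2, is the bookkeeping forced by the coexistence of the group multiplications on $G_0$ and $G_1$, the groupoid composition $*$, and the two inverse operations on $G_1$.
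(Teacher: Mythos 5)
Your Step 1 reproduces the paper's construction of $p$ (the paper factors it as $q\circ\ell$ through the Lie 2-algebra $\scrL(G)$ of left-invariant fields, with $q(u)_1(\gamma)=T\cL_\gamma T1(u(e_0))$ and $q(\alpha)=\alpha\circ 1$), and your Step 2 is part (1) of Theorem~\ref{thm:5.2}. The first gap is in your claim that $p$ preserves brackets on morphisms ``because $\zeta\mapsto\zeta^L$ is a Lie algebra homomorphism and the Lie 2-algebra structure of $\X(G)$ is assembled from these brackets.'' The elements of $\X(G)_1$ are not vector fields on $G_1$, and the bracket there is \emph{defined} via the injection $J:\X(G)_1\to\calX(G_1)$ of \eqref{eq:J}, which uses the \emph{right}-invariant extension $j$ of algebroid sections. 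To conclude that $p_1$ is a Lie algebra map you must show $J(p_1(\eta))=\eta^L$; this is a genuine computation (the paper's lemma $J(q(\alpha))=\alpha$), resting on the 2-group identity $\sigma\ast\gamma=\gamma\cdot 1_{b\inv}\cdot\sigma$ of Lemma~\ref{lem:1}, which converts groupoid right translations into group left translations. Your appeal to left-invariant extension being a Lie algebra homomorphism does not touch this point.

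The more serious gap is Step 3, which you yourself flag as the crux. The paper's ``strict conical 2-limit'' is taken with respect to \emph{strictly} commuting cones: a cone with vertex $\fh$ is a map $\psi:\fh\to\X(G)$ with $\lambda(x)\circ\psi=\psi$ and $\lambda(\gamma)\psi=1_\psi$ on the nose (diagram \eqref{diag:4.2}); there is no coherence data $\nu^x$ in the definition. You instead take cones equipped with coherent isomorphisms $\nu^x:\lambda(x)\circ f\Rightarrow f$ and rest everything on the assertion that such data is ``forced'' to have $f$ landing in $p(\fg)$ and every $\nu^x_c$ equal to the identity. That rigidity claim is never argued, and it is false: already for the 2-group $G_1=G_0\times\R\toto G_0$ (trivial crossed module) one can take $\psi=0$ and $\nu^x$ a coboundary $\beta-\lambda(x)\beta$ built from a non-invariant morphism $\beta\in\X(G)_1$, obtaining a coherent pseudo-cone whose 2-cells are not identities; so your route proves a different (bilimit-type) statement at best and fails as written. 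What the theorem actually needs, and what the paper supplies, is the explicit characterization of the strictly invariant elements: $\lambda(x)u=u$ for all $x$ and $\lambda(\gamma)u=1_u$ for all $\gamma$ if and only if $u=p(u_0(e_0))$, and similarly $\lambda(x)\alpha=\alpha$ for all $x$ if and only if $\alpha=p(\alpha(e_0))$ — proved via the pointwise formula $\left(\lambda(\gamma)(u)\right)(z)=T\cL_\gamma\bigl(u_1(\cL_{\gamma\inv}1_z)\bigr)$ of Lemma~\ref{lem:4.1}. Once these are established, any strictly equivariant $\psi$ automatically lands in $p(\fg)$ and the factorization is unique; no rigidity of pseudo-invariance is invoked or needed. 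You gesture at the invariance computation, but only inside the pseudo framework and without carrying it out, so the decisive step of the proof is missing.
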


% \begin{remark}
% As we discussed above it is highly plausible that the one  should be able to realize the Lie 2-algebra of a Lie 2-group as a Lie 2-algebra of left-invariant vector field, where ``left-invariant" should mean a limit of the ``left regular representation" functor $\lambda: G\to \GL(\X(G))$.  The questions is --- what sort of a limit?   One of the surprising conclusion of Theorem~\ref{thm:main} is that the strict conical limit will do and no lax or weighted limits are required.

% %and in what 2-category?  Lie 2-algebras are objects in an ``obvious" 2-category $\mathsf{Lie2Alg}_{strict}$.   The 1-morphisms of $\mathsf{Lie2Alg}_{strict}$ are functors internal to the category of Lie algebras and 2-morphisms are the internal transformations.   There is another equally (if not more) natural category $\mathsf{Lie2Alg}$ obtained from $\mathsf{Lie2Alg}_{strict}$ by localizing it at essential equivalences.  Here ``essential equivalences" are by definition the internal functors that are fully faithful and essentially surjective as ordinary functors.     Unfortunately such essential equivalences need not even have weak inverses in  $\mathsf{Lie2Alg}_{strict}$.  The reason for this boils down to the fact that short exact sequences of Lie algebras need not split. 
% \end{remark}

\subsection*{Related work}   Higher Lie theory is a well-developed
subject.  The ideas go back to the work of Quillen \cite{Quillen} and
Sullivan \cite{Sullivan} on rational homotopy theory.   The problem of
associating a Lie 2-algebra to a strict Lie 2-group is, of course,
solved by applying a Lie functor to the Lie 2-group.   In fact a much
harder problem has been solved by \v Severa who introduced a {\sf
  Lie}-like functors  that go  from Lie $n$-groups  to $L_\infty$-algebras and
from  Lie $n$-groupoids to dg-manifolds \cite{S1,S2}.  In particular
one can use \v Severa's method to differentiate weak Lie 2-groups \cite{JSW}.

An even
harder problem is that of integration.  We note the work of Crainic
and Fernandes \cite{CF}, Getzler
\cite{Getzler}, Henriques \cite{Henriq} and  \v Severa and  \v
Sira\v n \cite{SS}.

\subsection*{Outline of the paper}\mbox{}\\
In Section~\ref{sec:2} we fix our notation, which unfortunately is
considerable.  We recall the definitions internal categories, of
2-groups, Lie 2-groups, Lie 2-algebras and 2-vector spaces.  We then
recall the interaction of composition and multiplications in a Lie
2-group and the fact that any Lie 2-group is a Lie groupoid.  We
discuss the category of vector fields $\X(K)$ on a Lie groupoid $K$
and the fact that this category is naturally a Lie 2-algebra.  In
particular we discuss the origin of the Lie bracket on the space of
morphisms of $\X(K)$.

In Section~\ref{sec:action} we discuss the 2-group of automorphisms of
a category.  We define an action of a 2-group on a category and
express the action in terms of a 1-morphism of 2-groups.  We show that
the multiplication of a Lie 2-group $G$ leads to an action $L:G\to
\Aut(G)$ of the group on itself by smooth (internal) functors and
natural isomorphisms.  We show that an action of a Lie 2-group $G$ on
a Lie groupoid $K$ by smooth (internal) functors and natural
isomorphisms leads to a representation of $G$ on the on the 2-vector
space $\X(K)$ of vector fields on $K$.  In particular left
multiplication $L:G\to \Aut (G)$ leads to a representation $\lambda:
G\to \mathrm{GL} (\X(G))$ of a Lie 2-group $G$ on its 2-vector space
of vector fields.  Various results of this section may well be known
to experts. I don't know of suitable references.

% $G \to {\mathrm GL}(\X(G)$  on the
% 2-vector space $\X(K)$ of vector fields on $K$.

In Section~\ref{sec:p} for a Lie 2-group $G$ we construct a 1-morphism
of Lie 2-algebras $p:\fg\to \X(G)$ which is fully faithful and
injective on objects.  Consequently the image $p(\fg)$ is a full Lie
2-subalgebra of the Lie 2-algebra of vector fields $\X(G)$.

Finally in Section~\ref{sec:u} we show that the inclusion $i:p(\fg)
\hookrightarrow \X(G)$ is a strict conical 2-limit of the left regular
representation $\lambda: G\to \mathrm{GL} (\X(G))$.

\subsection*{Acknowledgments} The paper is part of a joint project
with Dan Berwick-Evans.  I am grateful to Dan for many fruitful
discussions.

I thank the referee for the careful reading of the paper and for many
interesting and helpful comments.

\section{Background and notation}\label{sec:2}

\begin{notation}
  Given a natural transformation $\alpha:f\Rightarrow g$ between a
  pair of functors $f,g:\sfA\to \sfB$ we denote the component of
  $\alpha$ at an object $a$ of $\sfA$ either as $\alpha_a$ or as
  $\alpha(a)$, depending on readability.
\end{notation}

\begin{notation}
  Given a category $\sfC$ we denote its collection of objects by
  $\sfC_0$ and its collection of morphisms by $\sfC_1$. The
  source and target maps of the category $\sfC$ are denoted by $s,t:\sfC_1\to
  \sfC_0$, respectively.  The unit map from objects to morphisms is
  denoted by $1:\sfC_0\to \sfC_1$. We write
\[
\ast : \sfC_1 \times_{s,\sfC_0, t} \sfC_1 \to \sfC_1, \qquad (\sigma,
\gamma) \mapsto \sigma \ast \gamma
\]
to denote composition in the category $\sfC$.  Here and elsewhere
\[
\sfC_2:= \sfC_1 \times_{s,\sfC_0, t} \sfC_1 = \{(\gamma_2, \gamma_1)\in G_1
\times G_1 \mid s(\gamma_2) = t(\gamma_1)\}
\]
denotes the fiber product of the maps $s:\sfC_1\to \sfC_0$ and
$t:\sfC_1 \to \sfC_0$. 
\end{notation}

% The paper uses a fair amount of notation.  
In this paper there  are many Lie
2-algebras, compositions and multiplications.  For the reader's
convenience we summarize our notation below.  Some of the notation has
already been introduced above.  The explanation of the rest follows
the summary.
\subsection*{Summary of notation}
\begin{center}
\setlength{\tabcolsep}{14pt}
\renewcommand{\arraystretch}{1.5}
  \begin{longtable}{ l p{10cm}}

 $s,t:\sfC_1\to \sfC_0$ & The source and target maps of a category $\sfC$.\\
$\ast: \sfC_1 \times_{s,\sfC_0, t} \sfC_1 \to \sfC_1$ & The
composition map of a category $\sfC$.\\

$1:\sfC_0 \to \sfC_1$  & The unit map of a category $\sfC$.\\

$1_x  \in \sfC_1$ & the value of the unit map $1:\sfC_0\to \sfC_1$ on 
an object $x$ of $\sfC$.\\

$g\alpha f$ & the whiskering of a natural transformation
$\alpha:k\Rightarrow h$ by functors $g$ and $f$:
\[
\xy (-200,0)*+{g\alpha f=
\xy
(-20,0)*+{\sfD}="1";
(-5,0)*+{\sfC}="2";
( 10,0)*+{\sfB}="3";
(25,0)*+{\sfA}="4";
{\ar@{->}_{g} "2";"1"};
{\ar@{->}_{f} "4";"3"};
{\ar@/^1.pc/^{h} "3";"2"};
{\ar@/_1.pc/_{k} "3";"2"};
{\ar@{=>}^<<<{\scriptstyle \alpha} (3,3)*{};(3,-3)*{}} 
\endxy.}
\endxy
\]
\\

$G=\{G_1\toto G_0\}$ & a Lie 2-group with the Lie group $G_0$ of objects 
and $G_1$ of morphisms.     \\
$e_0\in G_0$, $e_1\in G_1$ & the multiplicative identities in the Lie 
groups $G_0$ and $G_1$ respectively.\\

$\fg=\{\fg_1\toto \fg_0\}$ & 
the Lie 2-algebra of a Lie 2-group $G$ obtained by applying the $\Lie$
functor to the objects, morphisms and the structure maps of $G$:
$\fg_0 = T_{e_0}G_0$, $\fg_1 = T_{e_1} G_1$.\\

$\scrL(G)$ & the Lie 2-algebra of a Lie 2-group $G$ whose objects are
the left-invariant vector fields on the Lie group $G_0$ and morphisms
are the left-invariant vector fields on the Lie group $G_1$.  It is
isomorphic to $\fg$.\\

$\calX (M)$ &
the Lie algebra of vector fields on a manifold $M$.\\

$\cdot$ or $m$ & the multiplication of the Lie 2-group $G$.  We may
view $m$ as a functor.  It has components $m_1:G_1\times G_1\to G_1$
and $m_0:G_0\times G_0\to G_0$.   We may abbreviate $m_1$ and $m_0$ as $m$.\\

$\bullet$ or 
$Tm:TG\times TG \to TG$ & The derivative of the multiplication functor
$m:G\times G\to G$.\\

$\star: TK_1\times_{TK_0} TK_1 \to TK$ & the composition in the tangent
groupoid $TK$ of a Lie groupoid $K$; $\star$ is the derivative of the
                                         composition\\
    & $\ast:K_1\times_{K_0}K_1\to K_1$.\\

$\X(K)$ & the Lie 2-algebra of vector fields on a Lie groupoid $K$ or the
2-vector space  underlying the Lie 2-algebra.\\

$\cL_z:Z\to Z$ & left multiplication diffeomorphism of a Lie group $Z$
defined by an element $z\in Z$: $\cL_z (z') = zz'$ for all $z'\in Z$.\\
 
$L_x: G\to G$ & the {\em functor} from a Lie 2-group $G$ to itself
defined by the left multiplication by an object $x$ of $G$.\\

$L_\gamma: L_x \Rightarrow L_y$ & the natural transformation between
two left multiplication functors defined by an arrow
$x\xrightarrow{\gamma}y$ in a Lie 2-group $G$. \\

$\lambda(x):\X(G)\to \X(G)$ & the 1-morphism of the 2-vector space
$\X(G)$ induced by an object $x$ of $G$. It is induced by the
left-multiplications functors $TL_x:TG\to TG$ and $L_{x\inv}:G\to
G$.\\

$\lambda(\gamma):\lambda(x)\Rightarrow \lambda(y)$& the 2-morphism of
the Lie 2-algebra $\X(G)$ induced by an arrow $x\xrightarrow{\gamma}y$
in the Lie 2-group $G$.\\
 \end{longtable}
\end{center}\mbox{}\\

\begin{definition}\label{def:internal}
Recall that given a category $\sfC$ with finite limits one can talk
about {\sf categories internal to $\sfC$} \cite{MacLane}.  Namely a
category $C$ internal to the category $\sfC$ consists of two objects
$C_1$, $C_0$ of $\sfC$ together with a five morphisms of $\sfC$)
$s,t: C_1 \to C_0$ (source, target), $1:C_0\to C_1$ (unit) and
composition/multiplication $\ast : C_1 \times _{C_0} C_1\to C_1$
satisfying the usual equations.  Similarly, given two categories
internal to $\sfC$ there exist {\sf internal functors} between them.
Internal functors consists of pairs of morphisms of $\sfC$ satisfying
the appropriate equations.
And given two internal functors one can talk about {\sf internal
  natural transformations} between the functors.  The categories
$\sfC$ of interest to us include groups, vector spaces, Lie groups and
Lie algebras.  The resulting internal categories are called {\sf
  2-groups} (also known as cat-groups, categorical groups,
gr-categories and categories with a group structure), Baez-Crans
{\sf 2-vector spaces}, {\sf Lie 2-groups} and {\sf Lie 2-algebras},
respectively.
\end{definition}

% \noindent
% Since we are only interested in  strict Lie 2-groups we can define
% them as follows \cite{BL}  
% Thus we use the
%following definition.
% \begin{definition}
% A {\sf Lie 2-group}  is a category internal to
% the category $\LieGp$ of Lie groups. 
% \end{definition}
We note that in particular  a Lie 2-group $G$ has a Lie group
$G_0$ of objects, a Lie group $G_1$ of morphisms and all the structure
maps: source $s:G_1\to G_0$, target $t:G_1\to G_0$, unit $1:G_0\to
G_1$ and composition $\ast: G_1\times _{s,G_0,t}G_1\to G_1$ are maps
of Lie groups (the Lie group structure on $G_2:= G_1\times
_{s,G_0,t}G_1\to G_1$ is discussed below).  We denote the
multiplicative identity of the group $G_0$ by $e_0$.  Since $1:G_0\to
G_1$ is a map of Lie groups, the multiplicative identity $e_1$ of
$G_1$ satisfies
\[
e_1 = 1_{e_0}.
\]
We denote the Lie group multiplications on $G_1$ and $G_0$ by $m_1$
and $m_0$ respectively.  Since the category of Lie groups has
transverse fiber products, the fiber product $G_2= G_1\times
_{s,G_0,t}G_1$ is a Lie group.  We denote the multiplication on
this group by $m_2$.  If we identify $G_2$ with the Lie subgroup of
$G_1\times G_1$:
\[
G_2 = \{(\sigma, \gamma)\in G_1\times G_1 \mid s(\sigma )  = t (\gamma)\},
\]
then the multiplication $m_2$ is given by the formula
\[
m_2 ((\sigma_2, \gamma_2), (\sigma_1, \gamma_1)) = (m_1(\sigma_2,
\sigma_1), m_1 (\gamma_2, \gamma_1)).
\]
Alternatively, using the infix notation $\cdot$ for the multiplications the
formula above amounts to
\[
(\sigma_2, \gamma_2)\cdot (\sigma_1, \gamma_1) = (\sigma_2\cdot \sigma_1,
\gamma_2 \cdot \gamma_1).
\]
The following lemma is well-known to experts and is easy to prove.  None the less it is crucial for many computations in the paper.

\begin{lemma}\label{lem:2.3}
  Let $G=\{G_1\toto G_0\}$ be a Lie 2-group with the composition $\ast
  :G_2 =G_1 \times _{G_0} G_1 \to G_1$ and multiplication
  $m_1:G_1\times G_1 \to G_1$, $(\gamma, \sigma) \mapsto
  \gamma\cdot \sigma$.  Then
\begin{equation}\label{eq:1}
  (\sigma_2\ast \sigma_1)\cdot (\gamma_2\ast \gamma_1) 
= (\sigma_2\cdot \gamma_2)\ast (\sigma_1\cdot \gamma_1),
\end{equation}
for all $(\sigma_2, \sigma_1),(\gamma_2, \gamma_1) \in G_2 = G_1\times
_{s,G_0,t}G_1$.
\end{lemma}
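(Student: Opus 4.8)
The plan is to prove the interchange law \eqref{eq:1} by recognizing that it is precisely the statement that the composition map $\ast: G_2 \to G_1$ is a homomorphism of Lie groups — or equivalently that the multiplication map $m_1: G_1\times G_1 \to G_1$ is a functor with respect to the categorical structure on $G$. Either formulation gives the identity; I will spell out the second one since it is the most conceptual. First I would recall that $G = \{G_1 \toto G_0\}$ is a category internal to $\LieGp$, so in particular $G_1\times G_1$ and $G_0 \times G_0$ carry the product Lie group structures, and $\{G_1\times G_1 \toto G_0\times G_0\}$ is again a category internal to $\LieGp$ (a product in the 2-category of internal categories). The multiplication $m = (m_1, m_0)$ is by hypothesis a morphism in $\LieGp$ on objects and on morphisms, and it is compatible with all the structure maps $s, t, 1, \ast$ because these are themselves group homomorphisms; hence $m$ is an internal functor $G\times G \to G$.

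The key step is then to apply the defining property of a functor — namely that it preserves composition — to a specific pair of composable morphisms in $G \times G$. An object of $G\times G$ is a pair of objects of $G$, and a morphism is a pair $(\sigma, \gamma)$ with $\sigma, \gamma \in G_1$; two such morphisms $(\sigma_2, \gamma_2)$ and $(\sigma_1, \gamma_1)$ are composable in $G\times G$ exactly when $\sigma_2$ and $\sigma_1$ are composable in $G$ and $\gamma_2$ and $\gamma_1$ are composable in $G$, i.e. exactly when $(\sigma_2,\sigma_1)\in G_2$ and $(\gamma_2,\gamma_1)\in G_2$; in that case their composite in $G\times G$ is $(\sigma_2\ast\sigma_1, \gamma_2\ast\gamma_1)$. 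Functoriality of $m_1$ says $m_1\big((\sigma_2,\gamma_2) \text{ composed with } (\sigma_1,\gamma_1)\big) = m_1(\sigma_2,\gamma_2)\ast m_1(\sigma_1,\gamma_1)$, which in the infix notation is exactly
\[
(\sigma_2\ast\sigma_1)\cdot(\gamma_2\ast\gamma_1) = (\sigma_2\cdot\gamma_2)\ast(\sigma_1\cdot\gamma_1),
\]
as desired. One should check along the way that the source/target of $\sigma_i\cdot\gamma_i$ match up so that the right-hand composite is defined: this follows because $s$ and $t$ are group homomorphisms, so $s(\sigma_2\cdot\gamma_2) = s(\sigma_2)\cdot s(\gamma_2) = t(\sigma_1)\cdot t(\gamma_1) = t(\sigma_1\cdot\gamma_1)$.

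The only real obstacle is a bookkeeping one: one must be careful that the product category $G\times G$ is set up so that ``composable in $G\times G$'' unpacks correctly to the two conditions $(\sigma_2,\sigma_1),(\gamma_2,\gamma_1)\in G_2$, and that the composition in $G\times G$ is componentwise; both are immediate from the universal property of products of internal categories but worth stating. An alternative, entirely computational proof is also available and perhaps preferable for a self-contained write-up: take $(\sigma_2,\sigma_1),(\gamma_2,\gamma_1)\in G_2$, observe that $((\sigma_2,\gamma_2),(\sigma_1,\gamma_1))$ lies in the fiber product $G_2\times_{\,\cdot\,}G_2$ defining the domain of $m_2$ — or rather that $\ast: G_2\to G_1$ is a Lie group homomorphism for the group structure $m_2$ on $G_2$ described just above — and then $\ast\big(m_2((\sigma_2,\gamma_2),(\sigma_1,\gamma_1))\big) = \ast(\sigma_2\cdot\gamma_2, \sigma_1\cdot\gamma_1) = (\sigma_2\cdot\gamma_2)\ast(\sigma_1\cdot\gamma_1)$ while $m_1\big(\ast(\sigma_2,\sigma_1), \ast(\gamma_2,\gamma_1)\big) = (\sigma_2\ast\sigma_1)\cdot(\gamma_2\ast\gamma_1)$; equating the two (which is exactly the homomorphism property of $\ast$) gives \eqref{eq:1}. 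I expect to present the functoriality argument as the main proof and mention the homomorphism reformulation as a remark.
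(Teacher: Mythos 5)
Your ``alternative, entirely computational'' argument is exactly the paper's proof: one observes that $\ast:G_2\to G_1$ is a homomorphism of Lie groups for the componentwise group structure $m_2$ on $G_2$, evaluates this on the pair $((\sigma_2,\gamma_2),(\sigma_1,\gamma_1))$, and translates prefix into infix notation to land on \eqref{eq:1}. That part is complete and correct, including your check that $s(\sigma_2\cdot\gamma_2)=t(\sigma_1\cdot\gamma_1)$ so that the right-hand composite is defined; it should be your main proof, not a remark.

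The argument you intend to lead with, however, is circular as framed. Saying that $m=(m_0,m_1)$ is an internal functor $G\times G\to G$ means precisely: compatibility with $s$, $t$, $1$ (which does follow from those maps being group homomorphisms) \emph{and} compatibility with composition, i.e. $m_1\bigl((\sigma_2,\gamma_2)\ast_{G\times G}(\sigma_1,\gamma_1)\bigr)=m_1(\sigma_2,\gamma_2)\ast m_1(\sigma_1,\gamma_1)$ --- and that last equation is literally \eqref{eq:1}, the statement to be proved. So ``$m$ is a functor, now apply functoriality to $(\sigma_2,\gamma_2)$ and $(\sigma_1,\gamma_1)$'' assumes the conclusion; indeed, in the paper the fact that the $m_i$ assemble into a functor is recorded as a \emph{corollary} of this lemma, not an input to it. Your parenthetical justification ``compatible with $\ast$ because $\ast$ is a group homomorphism'' is the right idea, but unpacking it is exactly the computation in your alternative proof, so the logical order must be: homomorphism property of $\ast$ $\Rightarrow$ \eqref{eq:1} $\Rightarrow$ $m$ is a functor. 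Two smaller slips: $m_0$ and $m_1$ are not ``by hypothesis morphisms in $\LieGp$'' --- group multiplication is not a group homomorphism unless the group is abelian; they are smooth maps, and $m$ is a functor internal to $\Man$ (a functor of Lie groupoids), which is all that is claimed or needed.
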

\begin{proof}
Since the composition $\ast :G_2 \to G_1$ is a Lie
group homomorphism,
\begin{equation}\label{eq:2}
 \ast((\sigma_2, \sigma_1)\cdot (\gamma_2,\gamma_1)) = 
(\ast(\sigma_2, \sigma_1))\cdot(\ast(\gamma_2,\gamma_1)).
\end{equation}
On the other hand 
\begin{equation}\label{eq:3}
(\sigma_2, \sigma_1)\cdot(\gamma_2,\gamma_1) 
= (\sigma_2\cdot\gamma_2, \sigma_1\cdot\gamma_1)
\end{equation}
while 
\begin{equation}\label{eq:4}
  (\ast(\sigma_2, \sigma_1))\cdot(\ast(\gamma_2,\gamma_1)) 
\equiv  (\sigma_2\ast \sigma_1)\cdot(\gamma_2\ast \gamma_1) 
\end{equation}
when we switch from the prefix to infix notation.  Similarly,
\begin{equation}\label{eq:5}
 \ast(\sigma_2\cdot\gamma_2, \sigma_1\cdot\gamma_1)
\equiv (\sigma_2\cdot\gamma_2)\ast( \sigma_1\cdot\gamma_1).
\end{equation}
Therefore
\[
(\sigma_2\cdot\gamma_2)\ast( \sigma_1\cdot\gamma_1) 
= (\sigma_2\ast \sigma_1)\cdot(\gamma_2\ast \gamma_1). 
\] 
\end{proof}

\begin{corollary}
  The multiplications $m_i:G_i\times G_i \to G_i$, $i=0,1$ on a Lie
  2-group $G$ assemble into a functor $m:G\times G \to G$.
\end{corollary}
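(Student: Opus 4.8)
The plan is to verify directly that the pair of maps $(m_0, m_1)$ constitutes a morphism of Lie groupoids, equivalently a smooth functor, from the product groupoid $G\times G$ to $G$. First I would record the structure of $G\times G$: its Lie group of objects is $G_0\times G_0$, its Lie group of morphisms is $G_1\times G_1$, its source and target are $s\times s$ and $t\times t$, its unit is $1\times 1$, and the composition of a composable pair is
\[
\bigl((\sigma_2,\gamma_2),(\sigma_1,\gamma_1)\bigr) \longmapsto (\sigma_2\ast\sigma_1,\ \gamma_2\ast\gamma_1).
\]
Smoothness of $m_0$ and $m_1$ is automatic, since these are the multiplication maps of the Lie groups $G_0$ and $G_1$.

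Next I would check compatibility of $(m_0,m_1)$ with the source, target and unit maps. Since $s,t:G_1\to G_0$ and $1:G_0\to G_1$ are homomorphisms of Lie groups, we have $s(\gamma\cdot\sigma)=s(\gamma)\cdot s(\sigma)$, $t(\gamma\cdot\sigma)=t(\gamma)\cdot t(\sigma)$ and $1_{x\cdot y}=1_x\cdot 1_y$; rewritten, these say exactly that $s\circ m_1 = m_0\circ (s\times s)$, $t\circ m_1 = m_0\circ (t\times t)$ and $m_1\circ(1\times 1) = 1\circ m_0$. The first two identities also guarantee that whenever $(\sigma_2,\gamma_2)$ and $(\sigma_1,\gamma_1)$ are composable in $G\times G$, the morphisms $m_1(\sigma_2,\gamma_2)=\sigma_2\cdot\gamma_2$ and $m_1(\sigma_1,\gamma_1)=\sigma_1\cdot\gamma_1$ are composable in $G$, so the remaining functoriality condition is well posed.

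Finally, functoriality with respect to composition is the identity
\[
(\sigma_2\ast\sigma_1)\cdot(\gamma_2\ast\gamma_1) = (\sigma_2\cdot\gamma_2)\ast(\sigma_1\cdot\gamma_1)
\]
for all composable pairs $(\sigma_2,\sigma_1),(\gamma_2,\gamma_1)\in G_2$, which is precisely equation~\eqref{eq:1} of Lemma~\ref{lem:2.3}. Combining the four verifications, the pair $(m_0,m_1)$ defines the required functor $m:G\times G\to G$.

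There is essentially no obstacle in this argument: the only non-formal input is the interchange law relating the composition $\ast$ and the multiplications, which the preceding lemma already supplies, and everything else is bookkeeping about the product groupoid and the homomorphism properties of the structure maps of $G$. For clarity I would also remark that $m$ is merely a smooth functor and not, in general, a functor internal to Lie groups, since $m_0:G_0\times G_0\to G_0$ is a group homomorphism precisely when $G_0$ is abelian.
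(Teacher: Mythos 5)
Your proof is correct: the paper omits the argument, but the intended one is exactly what you wrote — compatibility with source, target and unit follows from $s$, $t$, $1$ being Lie group homomorphisms, and functoriality on composition is precisely the interchange law \eqref{eq:1} of Lemma~\ref{lem:2.3}, which the paper states immediately beforehand for this purpose. Your closing remark that $m$ is only a smooth (internal to manifolds) functor, not one internal to Lie groups unless $G_0$ is abelian, is also accurate and consistent with how the paper later uses $m$.
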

\begin{proof}
Omitted.
\end{proof}
Equation \eqref{eq:1} also implies that the multiplication functor
$m:G\times G \to G$ and the composition homomorphism $\ast :G_1\times
_{G_0} G_1 \to G_1$ in a Lie 2-group $G$ are closely related.  In fact
they determine each other \cite{MacLane}.  For the convenience of the
reader we recall a proof that the multiplication functor $m$
determines the composition homomorphism $\ast$:

\begin{lemma}\label{lem:1}
  For any two composable arrows $\sigma, \gamma$ of a Lie 2-group
  $G$ with $s(\sigma)= b = t(\gamma)$
\[
\sigma \ast\gamma = \gamma\cdot 1_{b\inv} \cdot \sigma.
\]
Here as before $s,t:G_1\to G_0$ are the source and target maps, $1_{b\inv}$
denotes the unit arrow at the object $b\inv$ of $G$, $\cdot$ stands for the
multiplication $m_1$ on the space of arrows $G_1$ of the Lie 2-group
$G$ and $\ast: G_1\times_{G_0} G_1\to G_1$ is the composition homomorphism.
\end{lemma}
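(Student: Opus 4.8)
The plan is to deduce the formula $\sigma\ast\gamma = \gamma\cdot 1_{b\inv}\cdot\sigma$ directly from the interchange law, Equation~\eqref{eq:1}, by feeding it a cleverly chosen pair of composable pairs. The idea is that composition with a unit arrow does nothing, so I want to write $\sigma = \sigma\ast 1_b$ and $\gamma = 1_b\ast\gamma$ (using $s(\sigma)=b=t(\gamma)$, so that $1_b$ sits in the right place to be composed on either side), and then combine these two facts through the product $\cdot$ on $G_1$.

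First I would record the basic identities: for the composable pair $(\sigma,1_b)\in G_2$ we have $\sigma\ast 1_b = \sigma$, and for the composable pair $(1_b,\gamma)\in G_2$ we have $1_b\ast\gamma = \gamma$, since $1$ is the unit for $\ast$ and $1_b$ has source and target $b$. Next I would set up the interchange law~\eqref{eq:1} with the substitutions $\sigma_2 = \sigma$, $\sigma_1 = 1_b$, $\gamma_2 = 1_{b\inv}$, $\gamma_1 = \gamma$. I must first check these are legitimate elements of $G_2$: we need $s(\sigma) = t(1_b)$, i.e.\ $b = b$, and $s(1_{b\inv}) = t(\gamma)$, i.e.\ $b\inv = \dots$ — here I would instead choose the second pair more carefully so that the source/target matching in $G_2$ holds and the products $\sigma_2\cdot\gamma_2$ and $\sigma_1\cdot\gamma_1$ are again composable. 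The correct choice is $(\sigma_2,\sigma_1) = (\sigma, 1_b)$ and $(\gamma_2,\gamma_1) = (1_{b\inv}, \gamma)$: then $(\sigma,1_b)\in G_2$ as noted, and $(1_{b\inv},\gamma)\in G_2$ requires $s(1_{b\inv}) = b\inv$ to equal $t(\gamma) = b$, which fails in general. So the genuinely correct substitution is the one that makes the \emph{products} land on the nose: take $(\sigma_2,\sigma_1) = (\sigma, 1_b)$ and $(\gamma_2,\gamma_1) = (1_{b\inv}\cdot\,?,\,?)$ — rather than guess, I would observe that what is forced is: we want $\sigma_2\cdot\gamma_2$ and $\sigma_1\cdot\gamma_1$ to be \emph{composable}, i.e.\ $s(\sigma_2\cdot\gamma_2) = t(\sigma_1\cdot\gamma_1)$; since $s,t$ are group homomorphisms this reads $s(\sigma_2)s(\gamma_2) = t(\sigma_1)t(\gamma_1)$.

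So the clean execution is this: apply~\eqref{eq:1} with $\sigma_2 = \gamma$, $\sigma_1 = 1_{t(\gamma)} = 1_{t(\sigma)}$ ... and on reflection the economical route is to run the interchange law on the pairs $(\gamma, 1_{b})$ — no: the pair $(\gamma,1_b)$ needs $s(\gamma) = b$, which is false unless $\gamma$ is a loop. The honest statement is that the two composable pairs to plug in are $(\sigma, 1_b)$ and $(1_{b\inv}\cdot\gamma\cdot 1_{??})$. I will therefore organize the proof around the single equation obtained by setting $(\sigma_2,\sigma_1) = (\sigma, 1_b)$ and $(\gamma_2,\gamma_1) = (1_{b\inv}, 1_{b}\cdot\gamma)$ — and at this point I would simply verify the two fiber-product conditions and the identity $\sigma\ast 1_b = \sigma$, $1_{b\inv}\ast(1_b\cdot\gamma)$, expand the right-hand side $(\sigma\cdot 1_{b\inv})\ast(1_b\cdot 1_b\cdot\gamma)$, and use functoriality/unitality to collapse it. The left-hand side of~\eqref{eq:1} becomes $(\sigma\ast 1_b)\cdot(1_{b\inv}\ast(1_b\cdot\gamma)) = \sigma\cdot(1_{b\inv}\cdot\gamma)$ after noting $1_{b\inv}\ast(1_b\cdot\gamma)$ needs one more unit-absorption step; comparing the two sides and rearranging the group products in $G_1$ yields $\sigma\ast\gamma = \gamma\cdot 1_{b\inv}\cdot\sigma$ (using also that $1_{b\inv}\cdot 1_b = 1_{e_0} = e_1$).

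The only real obstacle is bookkeeping: one must choose the inputs to the interchange law so that (a) both pairs genuinely lie in $G_2$, and (b) after multiplying componentwise the results are again composable, so that $\ast$ can be applied; and then (c) every stray unit arrow must be absorbed using $1_b\ast\tau = \tau$, $\tau\ast 1_{b} = \tau$ and the fact that $1\colon G_0\to G_1$ is a group homomorphism, so $1_{b}\cdot 1_{b\inv} = 1_{e_0} = e_1$. None of these steps is deep, but getting the source/target matching exactly right is where care is needed; I expect this to be the heart of the (short) argument, and everything else is formal manipulation inside the groups $G_1$ and $G_0$.
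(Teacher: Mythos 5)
Your guiding idea --- obtain the formula from the interchange law \eqref{eq:1} by inserting unit arrows --- is exactly the idea behind the paper's proof, but as written your argument never reaches a valid instance of \eqref{eq:1}, and the one substitution you finally commit to does not satisfy the hypotheses. With $(\sigma_2,\sigma_1)=(\sigma,1_b)$ and $(\gamma_2,\gamma_1)=(1_{b\inv},\,1_b\cdot\gamma)$ the second pair is not composable: since $s$ and $t$ are group homomorphisms, $s(1_{b\inv})=b\inv$ while $t(1_b\cdot\gamma)=b^2$, so $(1_{b\inv},1_b\cdot\gamma)\in G_2$ only if $b^3=e_0$. Consequently the composite $1_{b\inv}\ast(1_b\cdot\gamma)$ and the right-hand side $(\sigma\cdot 1_{b\inv})\ast(1_b\cdot 1_b\cdot\gamma)$ that you propose to ``collapse'' are undefined. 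Moreover, even after repairing the bookkeeping (e.g.\ taking $(\gamma_2,\gamma_1)=(1_{e_0},\,1_{b\inv}\cdot\gamma)$, which \emph{is} composable), the identity that falls out is $\sigma\ast\gamma=\sigma\cdot 1_{b\inv}\cdot\gamma$, with the factors in the opposite order from the lemma; your closing step ``rearranging the group products in $G_1$'' is not available, because $G_1$ is noncommutative, and the fact that the two orderings agree is itself a statement of the same depth as the lemma (another consequence of \eqref{eq:1}), so it cannot be invoked for free.

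The missing ingredient is a choice of composable pairs that produces the order $\gamma\cdot 1_{b\inv}\cdot\sigma$ directly. Take $(\sigma_2,\sigma_1)=(1_b,\gamma)$ and $(\gamma_2,\gamma_1)=\bigl(1_{b\inv}\cdot\sigma,\;1_{b\inv}\cdot 1_b\bigr)$. Both pairs lie in $G_2$: $s(1_b)=b=t(\gamma)$, and $s(1_{b\inv}\cdot\sigma)=b\inv b=e_0=t(1_{e_0})$, where $1_{b\inv}\cdot 1_b=1_{e_0}=e_1$ because $1:G_0\to G_1$ is a homomorphism. Then \eqref{eq:1} gives
\[
\sigma\ast\gamma=\bigl(1_b\cdot(1_{b\inv}\cdot\sigma)\bigr)\ast\bigl(\gamma\cdot(1_{b\inv}\cdot 1_b)\bigr)
=(1_b\ast\gamma)\cdot\bigl((1_{b\inv}\cdot\sigma)\ast 1_{e_0}\bigr)
=\gamma\cdot 1_{b\inv}\cdot\sigma,
\]
using $1_b\ast\gamma=\gamma$ and $(1_{b\inv}\cdot\sigma)\ast 1_{e_0}=1_{b\inv}\cdot\sigma$. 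This is, in compressed form, the computation in the paper (which applies \eqref{eq:1} twice with the same initial insertion of units). So your proposal records the correct strategy, but the decisive step --- exhibiting pairs that are genuinely composable and yield the stated order without appealing to commutativity --- is absent, and that is precisely where the content of the lemma lies.
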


\begin{proof}
  We follow the proof in \cite[p.\ 186]{MacLane}. 
Note that since $1:G_0\to
G_1$ is a homomorphism,  the inverse $1_b \inv$ of $1_b$ with  respect to the multiplication $m_1$ is $1_{b\inv}$.  We compute
\begin{eqnarray*}
  \sigma \ast \gamma &=&
  ((1_b \cdot (1_b\inv\cdot \sigma)) \ast (\gamma \cdot (1_b \inv \cdot 1_b))\\
  &=& (1_b \ast \gamma) \cdot ((1_b \inv \cdot \sigma)\ast (1_b \inv \cdot 1_b))
  \quad \textrm{ by \eqref{eq:1} }\\
  &=& \gamma \cdot (1_b \inv \ast 1_b \inv )\cdot (\sigma \ast 1_b )
\qquad \textrm{ by \eqref{eq:1} again}\\
  &=& \gamma \cdot 1_{b \inv} \cdot \sigma\quad \textrm{since } 
  1_x \ast 1_x = 1_x 
  \textrm{ for all }x\in G_0\textrm{ and } (1_b)\inv = 1_{b\inv} .
\end{eqnarray*}
\end{proof}
Lemma~\ref{lem:1} has a well-known corollary:
any Lie 2-group is a Lie groupoid.
In fact we can be more precise:
\begin{corollary} \label{cor:2.7}
  A Lie 2-group $G$ is isomorphic, as a category internal to the
  category of manifolds, to the action groupoid $\{K\times G_0\toto
  G_0\}$ where $K$ is the kernel of the source map $s:G_1\to G_0$ and
  the action of $K$ on $G_0$ is given by
\[
k\diamond x := t(k)\cdot x
\]
for all $(k,x) \in K\times G_0$. As before $t:G_1\to G_0$ is the
target map.
\end{corollary}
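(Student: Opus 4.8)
The plan is to write down an explicit isomorphism of categories internal to $\Man$ between $G$ and $\{K\times G_0\toto G_0\}$, namely the pair $F=(F_1,F_0)$ with $F_0=\id_{G_0}$ and $F_1=\Phi:G_1\to K\times G_0$ given by $\Phi(\gamma):=(\gamma\cdot 1_{s(\gamma)^{-1}},\,s(\gamma))$. I would begin with the bookkeeping: since $s:G_1\to G_0$ is a homomorphism of Lie groups, $K=\ker s$ is a closed Lie subgroup of $G_1$, and since $t:G_1\to G_0$ is a homomorphism, $t|_K:K\to G_0$ is a homomorphism, so $k\diamond x:=t(k)\cdot x$ is a smooth left $K$-action on $G_0$ (note $e_1=1_{e_0}\in K$ with $t(e_1)=e_0$, and $t(k'k)=t(k')t(k)$). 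Hence the action groupoid $\{K\times G_0\toto G_0\}$ makes sense, with source $(k,x)\mapsto x$, target $(k,x)\mapsto t(k)\cdot x$, unit $x\mapsto(e_1,x)$, and composition $(k_2,x_2)\ast(k_1,x_1)=(k_2 k_1,x_1)$ defined when $x_2=t(k_1)\cdot x_1$. Also $\Phi$ really lands in $K\times G_0$, since $s(\gamma\cdot 1_{s(\gamma)^{-1}})=s(\gamma)\cdot s(\gamma)^{-1}=e_0$.

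Next I would check that $\Phi$ is a diffeomorphism: it is smooth, and $(k,x)\mapsto k\cdot 1_x$ is a smooth two-sided inverse, because $\big(k\cdot 1_x\big)\cdot 1_{x^{-1}}=k\cdot 1_{e_0}=k$ together with $s(k\cdot 1_x)=x$, and conversely $\big(\gamma\cdot 1_{s(\gamma)^{-1}}\big)\cdot 1_{s(\gamma)}=\gamma\cdot 1_{e_0}=\gamma$. It then remains to verify that $(\Phi,\id_{G_0})$ respects the structure maps. Compatibility with the source is built into the definition; compatibility with the target follows from $t(\gamma\cdot 1_{s(\gamma)^{-1}})\cdot s(\gamma)=\big(t(\gamma)\,s(\gamma)^{-1}\big)s(\gamma)=t(\gamma)$; and compatibility with units follows from $\Phi(1_x)=(1_x\cdot 1_{x^{-1}},x)=(1_{e_0},x)=(e_1,x)$.

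The one nontrivial step is compatibility with composition. Given composable arrows $\gamma:x\to y$ and $\sigma:y\to z$ in $G$, set $a:=\gamma\cdot 1_{x^{-1}}\in K$ and $b:=\sigma\cdot 1_{y^{-1}}\in K$, so that $\gamma=a\cdot 1_x$, $\sigma=b\cdot 1_y$, $\Phi(\gamma)=(a,x)$ and $\Phi(\sigma)=(b,y)$. Since $t(a)\cdot x=(y\,x^{-1})x=y=s(\sigma)$, the pair $(\Phi(\sigma),\Phi(\gamma))$ is composable in the action groupoid with composite $(b\,a,\,x)$, so everything reduces to the identity $\sigma\ast\gamma=(b\cdot a)\cdot 1_x$, after which $\Phi(\sigma\ast\gamma)=\big((b a)\cdot 1_x\cdot 1_{x^{-1}},\,x\big)=(b a,x)=\Phi(\sigma)\ast\Phi(\gamma)$. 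To prove $\sigma\ast\gamma=(b\cdot a)\cdot 1_x$ I would apply the interchange law \eqref{eq:1} of Lemma~\ref{lem:2.3} to the composable pairs $(\sigma_2,\sigma_1)=(b,e_1)$ and $(\gamma_2,\gamma_1)=(1_y,\gamma)$ — these lie in $G_2$ because $s(b)=e_0=t(e_1)$ and $s(1_y)=y=t(\gamma)$ — using that $\sigma_2\cdot\gamma_2=b\cdot 1_y=\sigma$ and $\sigma_1\cdot\gamma_1=e_1\cdot\gamma=\gamma$; then \eqref{eq:1} gives $\sigma\ast\gamma=(b\ast e_1)\cdot(1_y\ast\gamma)=b\cdot\gamma=b\cdot a\cdot 1_x$, where I used the unit laws $b\ast 1_{e_0}=b$ and $1_y\ast\gamma=\gamma$. (One could instead start from Lemma~\ref{lem:1}, but this decomposition keeps the algebra cleanest.) Since $\Phi$ and $\id_{G_0}$ are diffeomorphisms and all structure maps are respected, $F=(\Phi,\id_{G_0})$ is an isomorphism of categories internal to $\Man$, which is the claim. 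The main obstacle is exactly this last computation: one has to choose the decomposition of $\sigma$ and $\gamma$ for which the interchange law outputs $b\cdot a$ rather than a conjugated-looking expression, and to keep the composition convention of the action groupoid straight; the rest is routine.
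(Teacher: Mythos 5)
Your proposal is correct and follows essentially the same route as the paper: the paper's (sketched) proof uses exactly the same isomorphism, namely the identity on objects and $\gamma\mapsto(\gamma\cdot 1_{s(\gamma)\inv},\,s(\gamma))$ on arrows, and you have simply supplied the omitted verifications. The only cosmetic difference is that you establish compatibility with composition by applying the interchange law \eqref{eq:1} directly (obtaining $\sigma\ast\gamma=\sigma\cdot 1_{y\inv}\cdot\gamma$), whereas the paper points to Lemma~\ref{lem:1} (which gives the equivalent identity $\sigma\ast\gamma=\gamma\cdot 1_{y\inv}\cdot\sigma$) and is itself proved from the same interchange law.
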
 
\begin{proof}[Sketch of proof]
  The isomorphism of categories $\varphi:G\to \{K\times G_0\to G_0\}$ 
  is defined to be identity on objects.  On arrows $\varphi$  is given by
\[
\varphi_1(y\xleftarrow{\gamma} x) = (\gamma \cdot 1_{x\inv}, x).
\]
\end{proof}

\begin{remark}
The same argument shows that any 2-group (i.e., a category internal to
the category of groups) is an action groupoid.
 \end{remark} 
We next recall the definitions  of the 2-categories of Lie 2-algebras
and  of 2-vector spaces.
\begin{definition}
  Lie 2-algebras naturally form a strict 2-category $\LietAlg$.  The
  objects of this 2-category are Lie 2-algebras, the 1-morphisms are
  functors internal to the category $\mathsf{LieAlg}$ of Lie algebras and 2-morphisms
    are internal natural transformations. % Alternatively one can
%   localize the 2-category $\LietAlg_{{strict}}$ at essential equivalences and
%   obtain a bicategory $\LietAlg$.  See \cite{B-E_L} for more details.
 \end{definition}
\begin{definition}
  2-vector spaces naturally form a strict 2-category $2\Vect$.  The
  objects of this 2-category are 2-vector spaces.  The 1-morphisms of
  $2\Vect$ are internal functors and 2-morphisms are internal natural
  transformations.
\end{definition}

\begin{remark}
  There is an evident forgetful functor $U:\LietAlg\to
  2\Vect$.  We will suppress this functor in our notation and will use
  the same symbol for a Lie 2-algebra and its image under the functor
  $U$, that is, its underlying 2-vector space.
  \end{remark}\mbox{}

\subsection{The Lie 2-algebra $\X(K)$ of multiplicative vector fields
  on a Lie groupoid $K$} \mbox{}\\[10pt]
In this subsection we recall some of the results of \cite{B-E_L}.  We
start by recalling the definition of the {\em category} of
multiplicative vector fields $\X(K)$ on a Lie groupoid $K$, which is
due to Hepworth \cite{Hepworth}. 

\begin{definition}
 A {\sf multiplicative vector field}
on a Lie groupoid $K=\{K_1\toto K_0\}$ is a functor $v:K\to TK$ so
that $\pi_K\circ v = \id_K$.  A {\sf morphism} (or an {\sf arrow})
from a multiplicative vector field $v$ to a multiplicative vector
field $w$ is a natural transformation $\alpha:v\Rightarrow w$ so that
$\pi_K (\alpha (x)) = 1_x$ for any object $x$ of the groupoid $K$.
\end{definition}

Multiplicative vector fields and morphisms between them are easily
seen to form a category: the composite of two morphisms
$\alpha:v\Rightarrow w$ and $\beta:w\Rightarrow u$ is the natural
transformation $\beta \circ_v \alpha$, where $\circ_v$ denotes the
vertical composition of natural transformations.  That is, for any
object $x\in K_0$
\[
(\beta \circ_v \alpha) (x) = \beta(x) \star \alpha (x)
\]
where as before $\star: TK_1 \times _{TK_0} TK_1 \to TK_1$ is the
derivative of the composition $\ast: K_1 \times _{K_0} K_1 \to K_1$.
Since $\pi_K:TK \to K$ is functor,
\[
\pi_K (\beta(x) \star \alpha (x)) = 
\pi_K(\beta(x))\ast \pi_K (\alpha(x)) = 1_x \ast 1_x = 1_x 
\]
for all $x\in K_0$.  Hence $\beta \circ_v \alpha$ is a morphism from
$v$ to $u$. 

It is not hard to see that the collection $\X(K)_0$ multiplicative
vector fields form a vector space \cite{MackXu}.  It is a little harder
to see that $\X(K)_0$ is a Lie algebra ({\em op.\ cit.}).  However,
the Lie bracket on $\X(K)_0$ is easy to describe.  A multiplicative
vector field $u:K\to TK$ is, in particular,  a pair of ordinary vector fields:
\[
u= (u_0:K_0\to TK_0, u_1:K_1 \to TK_1).
\]
The bracket on $\X(K)_0$ is defined by 
\[
[(u_0, u_1), (v_0, v_1)]:= ([u_0,v_0], [u_1, v_1]).
\]
To see that the definition makes sense one checks that $([u_0,v_0],
[u_1, v_1])$ is a functor from $K$ to $TK$; see \cite{MackXu}.

The space of arrows $\X(K)_1$ is a Lie algebra as well and the
structure maps of the category $\X(K)$ are Lie algebra maps
\cite{B-E_L}.  In other words the category $\X(K)$ underlies a Lie
2-algebra.  

The bracket on the elements of $\X(K)_1$ ultimately comes
from the Lie bracket on the vector fields on the manifold $K_1$
\cite{B-E_L}.    But the relationship is not direct since the elements
of $\X(K)_1$ are not vector fields.  In more detail, write an arrow $\alpha\in \X(K)_1$ as 
\[
\alpha = (\alpha -\mathbbm{1}_{\mathbbm{s}(\alpha)})+  \mathbbm{1}_{\mathbbm{s}(\alpha)},
\]
where  $\mathbbm{1}:\X(K)_0 \to \X(K)_1$ is the unit map and
$\mathbbm{s}:\X(K)_1\to \X(K)_0$ is the source map of the category
$\X(K)$.  Recall that for a multiplicative vector field
$X$, the morphism $\bbm{1}_X:X\Rightarrow X$ is defined by
\[
\bbm{1}_X (x) = T1(X_0(x))
\]
for all $x\in K_0$.  The multiplicative vector field $\bbm{s}(\alpha)$
satisfies
\[
(\bbm{s}(\alpha))_0(x) = Ts (\alpha(x))
\]
for all $x\in K_0$, where on the right hand side $s:K_1\to K_0$ is, as
before, the source map for the Lie groupoid $K$.  Then
\[
Ts (\alpha -\mathbbm{1}_{\mathbbm{s}(\alpha)}) =0,
\]
hence $\alpha -\bbm{1} _{\bbm{s}(\alpha)}$ is a section of the Lie
algebroid $A_K\to K_0$ of the Lie groupoid $K$. 

Recall that the Lie bracket on the space of sections $\Gamma(A_K)$ of
the Lie algebroid $A_K$ is constructed by embedding %embedding
$\Gamma(A_K)$ into the space of vector fields on $K_1$ as
right-invariant vector fields.  That is, one constructs a map
\[
j:\Gamma(A_K) \to \calX(K_1)
\]
by setting 
\begin{equation} \label{eq:j}
j(\sigma) \, (\gamma) := TR_\gamma \left(\sigma (t(\gamma))\right)
\end{equation}
for all $\gamma \in K_1$.  The map $R_\gamma: s\inv (t(\gamma)) \to
K_1$ is defined by composition with $\gamma$ on the right:
\[
R_\gamma (\mu) := \mu \ast \gamma
\]
for all $\mu \in K_1$ with $s(\mu) = t(\gamma)$.

We now recall the construction of a Lie algebra structure on the space
$\X(K)_1$ (see \cite{B-E_L} where the details of the construction are
phrased somewhat differently).   Define
\[
J: \X(K)_1 \to \calX(K_1)
\]
by setting
 \begin{equation}\label{eq:J}
J(\alpha) := j(\alpha  -\bbm{1}_{\bbm{s}(\alpha)}) + \bbm{s}(\alpha)_1.
\end{equation}
The map $J$ is injective and its image happens to be closed under the
Lie bracket.  So for $\alpha, \beta\in \X(K)_1$  
we can (and do) define the Lie bracket
$[\alpha, \beta]$ to be the unique element of $\X(K)_1$ with
\[
J([\alpha, \beta]) = [J(\alpha), J(\beta)].
\]
One checks that the category $\X(K)$ of multiplicative vector fields
with the Lie algebra structures on the spaces of objects and morphisms
does form a Lie 2-algebra; see \cite{B-E_L}.

\section{Actions  and 
  representations of Lie 2-groups}\label{sec:action}

The goal of this section is to construct a representation
$\lambda:G\to \mathrm{GL} (\X(G))$ of a Lie group $G$ on its 2-vector
space $\X(G)$ of vector fields induced by the action of $G$ on itself
by left multiplication. This is the representation briefly described
in the introduction. We start by recalling some well-known material
about actions of Lie 2-groups. 
Recall that a {\sf 2-group} is a category internal to the category of
groups and a {\sf homomorphism}  of 2-groups is a functor internal to the category of groups (cf.\ Definition~\ref{def:internal}).

\begin{definition}[the 2-group $\Aut(K)$] Let $K$ be a Lie groupoid.
  The 2-group $\Aut(K)$ of automorphisms of $K$ is defined as follows.  

The group of objects $\Aut(K)_0$ consists of strictly
invertible smooth (i.e., internal) functors $f:K\to K$.  The group operation on 
 $\Aut(K)_0$ is the composition of functors.
The group of morphisms $\Aut(K)_1$
is the group of (smooth) natural isomorphisms under {\em vertical} composition.
The composition homomorphism $*: \Aut(K)_1\times_{\Aut(K)_0}
\Aut(K)_1\to \Aut(K)_1$ is the {\em horizontal} composition of
natural isomorphisms.  There are also evident source, target and unit maps:
\[
s(f\stackrel{\alpha}{\Rightarrow}g) = f, \quad
t(f\stackrel{\alpha}{\Rightarrow}g) =g, \quad 1 (f) =
(f\stackrel{\id_f}{\Rightarrow}f).
\]
Note that the component of $\id_f$ at an object $x\in K_0$ is
\[
 \id_f (x) = 1_{f(x)},
\]
the unit arrow on the object $f(x)$ of $K$.
\end{definition}

\begin{definition}
  A (strict left) {\sf action} of a Lie 2-group $G$ on a Lie groupoid $K$ is a
  functor $\ba:G\times K\to K$ so that the two diagrams
\begin{equation} \label{eq:3.2.1}
\xy
(-15,10)*+{G\times G\times K}="1";
(15,10)*+{G\times K}="2";
( -15,-10)*+{G\times K}="3";
(15,-10)*+{K}="4";
{\ar@{->}^{\id_G \times \ba} "1";"2"};
{\ar@{->}_{m\times \id_K} "1";"3"};
{\ar@{->}^{\ba} "2";"4"};
{\ar@{->}_{\ba} "3";"4"};
\endxy
\qquad \textrm{ \quad and \quad}\quad 
\xy
(-15,10)*+{G\times K}="1";
(15,10)*+{K}="2";
( -15,-10)*+{K}="3";
%(15,-10)*+{K}="4";
{\ar@{->}^{\ba} "1";"2"};
{\ar@{->}^{e\times \id_K} "3";"1"};
{\ar@{=} "3";"2"};
\endxy
\end{equation}
commute.  Here as before $m:G\times G\to G$ is the multiplication
functor. The functor $e\times \id_K$ is defined by $(e\times
\id_K)\,(\sigma) = (e_1, \sigma)$ for all arrows $\sigma $ of $K$, where
as before $e_1\in G_1$ is the multiplicative identity.
\end{definition}

\begin{notation}
  Given an action $\ba:G\times K\to K$ a Lie 2-group $G$ on a Lie
  groupoid $K$ it will be convenient at times to abbreviate $\ba(x,b)$
  as $x\cdot b$ for any two objects $x$ of $G$ and $b$ of $K$.
  Similarly we abbreviate $\ba(\gamma, \sigma)$ as $\gamma\cdot \sigma$
  for arrows $\gamma$ of $G$ and $\sigma$ of $K$. 
\end{notation}

\begin{remark}
  In the notation above the fact that $\ba:G\times K\to K$ preserves the
  composition of arrows
  translates into
\begin{equation}
(\gamma_2\ast\gamma_1)\cdot (\sigma_2 \ast \sigma_1) =
(\gamma_2\cdot \sigma_2)\ast (\gamma_1 \cdot \sigma_1)
\end{equation}
for any two pairs of composable arrows $(\gamma_2,\gamma_1)\in G_1
\times_{G_0} G_1$ and $(\sigma_2, \sigma_1)\in K_1\times_{K_0}K_1$.
\end{remark}

\begin{lemma} \label{lem:3.2.1}
An action $\ba:G\times K \to K$ of a Lie 2-group $G$ on a
  Lie groupoid $K$ gives rise to  a homomorphism of 2-groups
\begin{equation}\label{eq:3.1.1}
\ha:G\to \Aut(K).
\end{equation}
In particular for each object $x\in G_0$ there is a functor
$\ha(x):K\to K$ satisfying
\[
\ha(x)\, (b\xleftarrow{\sigma}a):= x\cdot b\xleftarrow{1_x\cdot \sigma}
x\cdot a
\]
for all arrows $b\xleftarrow{\sigma}a$ of the groupoid $K$.  And for
each arrow $y\xleftarrow{\gamma}y$ of $G$ there is a natural
transformation $\ha(\gamma):\ha(x)\Rightarrow \ha(y)$ satisfying
\[
\ha(\gamma) (b) = \gamma \cdot 1_b
\]
for all objects $b$ of $K$.
\end{lemma}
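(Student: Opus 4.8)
The plan is to obtain $\ha$ by restricting (``currying'') the action functor $\ba:G\times K\to K$ in its first variable, and then to check, one structure map at a time, that the resulting data is a homomorphism of 2-groups. Every verification should reduce to the same four inputs: $\ba$ is a functor, the two action squares in \eqref{eq:3.2.1} commute, $1:G_0\to G_1$ is a group homomorphism (so $1_{xy}=1_x\cdot 1_y$ and $1_{e_0}=e_1$), and the interchange relation \eqref{eq:1} of Lemma~\ref{lem:2.3}.

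First I set up the object part. For an object $x$ of $G$ let $\iota_x:K\to G\times K$ be the functor sending an arrow $\sigma$ of $K$ to $(1_x,\sigma)$ --- this is a functor because $1_x\ast 1_x=1_x$ and $1_x$ is an identity arrow --- and put $\ha(x):=\ba\circ\iota_x$. On objects this is $a\mapsto x\cdot a$ and on arrows $\sigma\mapsto 1_x\cdot\sigma$, so $\ha(x)$ is the functor named in the statement. Applying the associativity square to $\iota$'s (and $1_x\cdot 1_y=1_{xy}$ on arrows) gives $\ha(x)\circ\ha(y)=\ha(xy)$, and the unit square (with $e_1=1_{e_0}$) gives $\ha(e_0)=\id_K$; hence $\ha(x)$ is strictly invertible with inverse $\ha(x\inv)$, so $\ha_0:G_0\to\Aut(K)_0$ is well defined and is a group homomorphism. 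For an arrow $\gamma:x\to y$ of $G$ I take $\ha(\gamma)$ to have component $\ha(\gamma)(b):=\gamma\cdot 1_b=\ba(\gamma,1_b)$ at an object $b$ of $K$; since $(\gamma,1_b):(x,b)\to(y,b)$ in $G\times K$ this is an arrow $\ha(x)(b)\to\ha(y)(b)$ of $K$, automatically invertible because $K$ is a groupoid. Naturality of $\ha(\gamma)$ is the identity $\ba(1_y,\sigma)\ast\ba(\gamma,1_a)=\ba(\gamma,1_b)\ast\ba(1_x,\sigma)$ for $\sigma:a\to b$ in $K$, and both sides equal $\ba(\gamma,\sigma)$ because $(1_y,\sigma)\ast(\gamma,1_a)=(\gamma,\sigma)=(\gamma,1_b)\ast(1_x,\sigma)$ in $G\times K$ and $\ba$ preserves composition. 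It is then immediate that $s(\ha(\gamma))=\ha(s\gamma)$, $t(\ha(\gamma))=\ha(t\gamma)$, that $\ha(1_x)=\id_{\ha(x)}$ (since $\ba(1_x,1_b)=\ba(1_{(x,b)})=1_{x\cdot b}$), and that $\ha$ preserves categorical composition, since componentwise $\ba(\sigma,1_b)\ast\ba(\gamma,1_b)=\ba(\sigma\ast\gamma,1_b)$ for composable $\sigma,\gamma$ in $G$.

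The one step needing a genuine computation --- and the place where I expect the bookkeeping to be heaviest --- is that $\ha_1:G_1\to\Aut(K)_1$ is a group homomorphism, i.e.\ that $\ha$ sends the multiplication $m_1$ on arrows of $G$ to the group product on natural isomorphisms in $\Aut(K)$ (the one taking $\alpha:f\Rightarrow g$ and $\beta:f'\Rightarrow g'$ to a natural isomorphism $f\circ f'\Rightarrow g\circ g'$). For arrows $\gamma:x\to y$ and $\gamma':x'\to y'$ of $G$, expanding this product of $\ha(\gamma)$ and $\ha(\gamma')$ at an object $b$ of $K$ --- using that $\ba$ is a functor (so $\ba(1_{x'},1_b)=1_{x'\cdot b}$, etc.)\ and the associativity square --- yields the component $\ba\bigl((1_y\cdot\gamma')\ast(\gamma\cdot 1_{x'}),\,1_b\bigr)$; Lemma~\ref{lem:2.3} rewrites $(1_y\cdot\gamma')\ast(\gamma\cdot 1_{x'})$ as $(1_y\ast\gamma)\cdot(\gamma'\ast 1_{x'})=\gamma\cdot\gamma'$, so the component is $\ba(\gamma\cdot\gamma',1_b)=\ha(\gamma\cdot\gamma')(b)$, as required; moreover $\ha(e_1)=\ha(1_{e_0})$ is the identity natural isomorphism on $\id_K$, the unit of $\Aut(K)_1$. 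Collecting these facts --- $\ha_0$ and $\ha_1$ group homomorphisms, compatibility with $s$, $t$, $1$, and with categorical composition --- shows that $\ha:G\to\Aut(K)$ is a homomorphism of 2-groups of the claimed form.
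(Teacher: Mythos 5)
Your proposal is correct and follows essentially the same route as the paper: define $\ha(x)$ and $\ha(\gamma)$ by the stated formulas, verify functoriality and naturality from functoriality of $\ba$ together with the action squares, and reduce the homomorphism property on arrows to a component computation that ultimately rests on the interchange law (Lemma~\ref{lem:2.3}). Your bookkeeping in the key step --- pushing everything inside $\ba$ via the associativity square before applying interchange in $G$ --- is equivalent to the paper's direct use of the functoriality of $\ba$, and you are merely somewhat more explicit about the functor axioms for $\ha$ and the object-level homomorphism property, which the paper delegates to the remark following the lemma.
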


\begin{remark} The functor
\eqref{eq:3.1.1} is a homomorphism of 2-groups if and only if
\begin{enumerate}
\item $\ha(e_0\xleftarrow{e_1}e_0) =\,\,
  (\id_K\stackrel{1_{\id_K}}{\Leftarrow} \id_K)$ and
\item $\ha(\gamma_2 \cdot \gamma_1) = \ha(\gamma_2)\circ
  _{hor} \ha(\gamma_1)$ for any pairs of arrows $\gamma_2,
  \gamma_1$ of $G$. (Here as before $\cdot$ denotes the multiplication in the
  Lie group $G_1$.)
\end{enumerate} 
\end{remark}

\begin{remark}\label{rmrk:3.7.1}
Recall that given four functors and two natural
  transformations as below
\[
\xy
(-45,0)*+{\mathsf{C}}="1";
(-15, -0)*+{\mathsf{B}}="2";
( 15, 0)*+{\mathsf{A}}="3";
{\ar@/^1.pc/^{k} "2";"1"};
{\ar@/_1.pc/_{n} "2";"1"};
{\ar@/^1.pc/^{g} "3";"2"};
{\ar@/_1.pc/_{h} "3";"2"};
{\ar@{=>}^<<<{\scriptstyle \alpha} (0, 2.5)*{};(0, -2.5)*{}};
{\ar@{=>}^<<<{\scriptstyle \beta} (-30, 2.5)*{};(-30, -2.5)*{}};
\endxy
\]
the component $(\beta\circ_{hor}\alpha)(a)$ of the horizontal
composition  of $\beta$ and $\alpha$ at an object $a\in \sfA_0$ is given by
\[
 (\beta\circ_{hor}\alpha)(a) = 
\beta_{g(a)} \ast n (\alpha_a)
\]
where  $\ast: \sfC_1 \times _{\sfC_0}\sfC_1\to \sfC_1$ is the
composition in the category $\sfC$.
\end{remark}

\begin{proof}[Proof of Lemma~\ref{lem:3.2.1}]
  Since $\ba:G\times K\to K$ is a functor, for any two composable arrows
  $\sigma_2, \sigma_1$ in $K$ and for any object $x$ of $G$
\[
\ba(1_x, \sigma_2\ast \sigma_1) = \ba ((1_x, \sigma_2)\ast (1_x, \sigma_1)) 
= \ba(1_x, \sigma_2)\ast \ba(1_x, \sigma_1).
\]
We also have $\ba(1_x, \sigma_2\ast \sigma_1) = \ba ((1_x, \sigma_2)\ast
(1_x, \sigma_1))$ and $\ba(1_x, \sigma_2)\ast \ba(1_x, \sigma_1) =
\ha(x)(\sigma_2)\ast \ha(x)(\sigma_1)$.  Hence
\[
\ha(x)(\sigma_2\ast \sigma_1) = 
\ha(x)(\sigma_2)\ast\ha(x)(\sigma_1).
\]
We conclude that $\ha(x)$ is a functor for all objects $x$ of the
2-group $G$.

To check that for an arrow $x\xrightarrow{\gamma} y$ in $G$,
$\ha(\gamma)$ is a natural transformation from the functor
$\ha(x)$ to the functor $\ha(y)$ we need to check that for any
arrow $b\xleftarrow{\sigma} a$ in $K$
\begin{equation} \label{eq:3.3.3}
\ha(\gamma) (b)\ast \ha(x)(\sigma) = \ha(y)
(\sigma)\ast\ha(\gamma) (b).
\end{equation}
Now 
\begin{eqnarray*}
  \ha(\gamma) (b)\ast \ha(x)(\sigma) &=& (\gamma\cdot 1_b )\ast (1_x \cdot \sigma)\\
  &=& (\gamma \ast 1_x) \cdot (1_b\ast \sigma)\qquad 
\textrm{ (since } a \textrm{ is a functor) }\\ 
  &=& \gamma \cdot \sigma.
\end{eqnarray*}
Similarly
\[
 \ha(y)
(\sigma)\ast\ha(\gamma) (b)= \gamma \cdot \sigma
\]
as well.  Hence \eqref{eq:3.3.3} holds and  $\ha(\gamma)$ {\em is} a natural transformation.
Since $K$ is a groupoid $\ha(\gamma)$ is a natural isomorphism.

It is easy to see that $\ha(e_0)$ is the identity functor $\id_K$
and that $\ha(e_1)$ is the identity natural isomorphism
$1_{\id_K}$.

To prove that $\ha$ is a homomorphism of 2-groups it remains to check that 
\begin{equation}
\ha (\gamma_2\cdot \gamma_1) = \ha (\gamma_2)\circ_{hor}\ha (\gamma_1)
\end{equation}
for all arrows $\gamma_2,\gamma_1$ of $G$.
This is a computation.  Fix an object $a$ of $K$.  Then
\begin{eqnarray*}
\ha (\gamma_2)\circ_{hor}\ha (\gamma_1)&=& \ha(\gamma_2)(\ha(\gamma_1) a)
\ast (\ha(x_2)\left( \ha(\gamma_1)a\right)\qquad 
\textrm{ (by Remark~\ref{rmrk:3.7.1}) }  \\
&=&(\gamma_2 \cdot 1_{y_2\cdot a})\ast \left(1_{x_2}\cdot (\gamma_1\cdot 1_a)\right)
\qquad \textrm{(by definition of }\ha)\\
&=& \gamma_2\cdot (\gamma_1 \cdot 1_a) = (\gamma_2\cdot ]\gamma_1)\cdot 1_a
\qquad \textrm{ since the left diagram in \eqref{eq:3.2.1} commutes})\\
&=& \ha(\gamma_2\cdot\gamma_1)(a).
\end{eqnarray*}
\end{proof}

\begin{corollary} \label{lem:2.2.0}
For any Lie 2-group $G$  there is a homomorphism of 2-groups
\begin{equation}
L:G\to \Aut(G),\qquad 
(x\xrightarrow{\gamma}y)\mapsto (L_x\stackrel{L_\gamma}{\Rightarrow}L_y)
\end{equation}
where the smooth functors $L_x:G\to G$ are defined by
\[
L_x (\sigma) = 1_x \cdot \sigma 
\]
and the natural isomorphisms $L_\gamma:L_x\Rightarrow L_y$ are defined by
\[
L_\gamma(a) = \gamma \cdot 1_a
\]
for all objects $a$ of $G$.
Here $\cdot$ denotes the multiplication in the group $G_0$ and in the
group $G_1$.
\end{corollary}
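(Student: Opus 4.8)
The plan is to read off the corollary as the special case of Lemma~\ref{lem:3.2.1} in which the Lie groupoid $K$ is $G$ itself (viewed as a Lie groupoid via Lemma~\ref{lem:1}, cf.\ Corollary~\ref{cor:2.7}) and the action $\ba:G\times K\to K$ is the multiplication functor $m:G\times G\to G$ of the corollary following Lemma~\ref{lem:2.3}. So the first step is to verify that $m$ really is a (strict left) action of the Lie 2-group $G$ on the Lie groupoid $G$, i.e.\ that the associativity square and the unit triangle in \eqref{eq:3.2.1} commute with $K=G$, $\ba=m$. Since a diagram of functors commutes exactly when it commutes on objects and on morphisms, and since the object and morphism components of $m$ are the group multiplications $m_0:G_0\times G_0\to G_0$ and $m_1:G_1\times G_1\to G_1$, both diagrams reduce componentwise to the associativity and the unit axioms of the Lie groups $G_0$ and $G_1$; the unit triangle commutes strictly because $e_1=1_{e_0}$ is the genuine identity of $G_1$.

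The second step is then purely a matter of unwinding notation. Applying Lemma~\ref{lem:3.2.1} to $\ba=m$ produces a homomorphism of 2-groups $\widehat{m}:G\to\Aut(G)$; set $L:=\widehat{m}$. By the conclusion of that lemma, for an object $x$ the functor $\widehat{m}(x)$ sends an arrow $b\xleftarrow{\sigma}a$ of $G$ to $x\cdot b\xleftarrow{1_x\cdot\sigma}x\cdot a$, and since $t(1_x\cdot\sigma)=x\cdot b$ and $s(1_x\cdot\sigma)=x\cdot a$ automatically, this is precisely the formula $L_x(\sigma)=1_x\cdot\sigma$. Likewise, for an arrow $x\xrightarrow{\gamma}y$ the natural isomorphism $\widehat{m}(\gamma):\widehat{m}(x)\Rightarrow\widehat{m}(y)$ has component $\widehat{m}(\gamma)(a)=\gamma\cdot 1_a$ at each object $a$, which is the stated formula for $L_\gamma$. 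Strict invertibility of $L_x$ (with inverse $L_{x\inv}$) and smoothness of $L_x$ and $L_\gamma$ are inherited from the group structures on $G_0$, $G_1$ and the smoothness of $m_0$, $m_1$; that $L$ is a homomorphism of 2-groups is exactly the corresponding assertion of Lemma~\ref{lem:3.2.1}.

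There is essentially no obstacle. The only point deserving a moment's care is the bookkeeping in the first step: checking that the two coherence diagrams defining an action really do reduce, object by object and morphism by morphism, to the group axioms, and in particular that the left unit triangle holds on the nose rather than merely up to a natural isomorphism — which it does because the unit of $G_1$ is strictly $1_{e_0}$. Everything else is transport of Lemma~\ref{lem:3.2.1} across the identification $\ba=m$.
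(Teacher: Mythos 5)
Your proposal is correct and follows exactly the paper's route: the paper's proof is the one-line observation that the multiplication functor $m:G\times G\to G$ is an action of $G$ on the Lie groupoid $G$, followed by an application of Lemma~\ref{lem:3.2.1}; your additional verification that the diagrams \eqref{eq:3.2.1} reduce componentwise to the group axioms of $G_0$ and $G_1$ (with strictness of the unit triangle coming from $e_1=1_{e_0}$) just fills in the detail the paper leaves implicit.
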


\begin{proof}
  The multiplication functor $m:G\times G\to G$ is an action of the
  Lie 2-group $G$ on the Lie groupoid $G$.  Now apply Lemma~\ref{lem:3.2.1}.
\end{proof}

\begin{lemma}
  Let $G$ be a 2-group and $K$ a Lie groupoid.  A homomorphism
  $\rho:G\to \Aut(K)$ induces a homomorphism
\[
T\rho:G\to \Aut(TK), \qquad T\rho (x\xrightarrow{\gamma}y) = 
T\rho(x)\stackrel{T\rho(\gamma)}{\Rightarrow}T\rho(y).
\]
\end{lemma}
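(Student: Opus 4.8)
The plan is to obtain $T\rho$ as the composite $T_{*}\circ\rho$, where $T_{*}\colon\Aut(K)\to\Aut(TK)$ is the homomorphism of $2$-groups induced by the tangent $2$-functor $T\colon\LieGpd\to\LieGpd$ recalled above. Granting that $T_{*}$ is a homomorphism of $2$-groups, the composite $T_{*}\circ\rho$ is automatically one, being a composite of functors internal to the category of groups; and unravelling the definitions yields precisely the formula in the statement, since on objects $T_{*}(\rho(x))=T(\rho(x))=T\rho(x)$ and on arrows $T_{*}(\rho(\gamma))=T(\rho(\gamma))=T\rho(\gamma)$. Thus the entire content of the lemma is the assertion that $T$ restricts to a homomorphism $T_{*}\colon\Aut(K)\to\Aut(TK)$, and that is what I would verify.

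First I would treat objects: if $f\colon K\to K$ is a strictly invertible smooth functor with inverse $f\inv$, then functoriality of $T$ on $1$-morphisms gives $Tf\circ T(f\inv)=T(f\circ f\inv)=T\id_K=\id_{TK}$ and likewise $T(f\inv)\circ Tf=\id_{TK}$, so $Tf$ is a strictly invertible smooth endofunctor of $TK$, i.e.\ an object of $\Aut(TK)_0$; and $T(f\circ g)=Tf\circ Tg$ shows $f\mapsto Tf$ is a group homomorphism $\Aut(K)_0\to\Aut(TK)_0$. Next, morphisms: a smooth natural isomorphism $\alpha\colon f\Rightarrow g$ between objects $f,g$ of $\Aut(K)_0$ is in particular a smooth map $\alpha\colon K_0\to K_1$, and $T$ assigns to it the natural transformation $T\alpha\colon Tf\Rightarrow Tg$; since $TK$ is a groupoid, $T\alpha$ is automatically a natural isomorphism, hence an element of $\Aut(TK)_1$. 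It then remains to check that $T_{*}$ is compatible with the structure maps of the two $2$-groups: source, target and unit are intertwined because $s(T\alpha)=Tf=T(s(\alpha))$, $t(T\alpha)=Tg=T(t(\alpha))$, and $\id_f$ has component map $1\circ f_0\colon K_0\to K_1$ (with $1$ the unit of $K$) while $\id_{Tf}$ has component map $T1\circ Tf_0=T(1\circ f_0)$; the group law on morphisms is vertical composition, and it is preserved because $T$ takes the composition $\ast\colon K_1\times_{K_0}K_1\to K_1$ to $\star\colon TK_1\times_{TK_0}TK_1\to TK_1$ and is functorial on component maps, so $T(\beta\circ_{v}\alpha)=T\beta\circ_{v}T\alpha$; finally the composition homomorphism of the $2$-groups is horizontal composition of natural isomorphisms, preserved once $T$ is known to preserve whiskering and vertical composition.

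The one step I would spell out in full is the claim that $T$ preserves whiskering of natural transformations, since the excerpt has so far recorded the action of $T$ on objects, $1$-morphisms and $2$-morphisms only separately. For $\alpha\colon k\Rightarrow n$ and a functor $h$ this reduces to the two identities $T(h\,\alpha)=Th\,T\alpha$ and $T(\alpha\,h)=T\alpha\,Th$; as the component map of $h\,\alpha$ is $h_1\circ\alpha\colon K_0\to K_1$ and that of $\alpha\,h$ is $\alpha\circ h_0\colon K_0\to K_1$, both identities are instances of the functoriality of $T$ on manifolds and smooth maps (i.e.\ the chain rule). Combining this with the preservation of vertical composition and the standard expression of horizontal composition through whiskering and vertical composition (as in Remark~\ref{rmrk:3.7.1}) gives $T(\beta\circ_{hor}\alpha)=T\beta\circ_{hor}T\alpha$, completing the verification that $T_{*}$ is a homomorphism of $2$-groups. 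The whole argument is routine bookkeeping; the only thing one must keep straight is that the group operation on $\Aut(K)_1$ is vertical composition whereas the categorical composition in the $2$-group $\Aut(K)$ is horizontal composition, and that $T$ respects both.
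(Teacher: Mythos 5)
Your proposal is correct and follows exactly the paper's argument: the paper's proof is the one-line observation that $T\rho$ is the composite of $\rho$ with the tangent 2-functor $T:\LieGpd\to\LieGpd$, which is precisely your $T_{*}\circ\rho$. The additional verifications you carry out (invertibility of $Tf$, compatibility with source, target, unit, vertical and horizontal composition) are just the spelled-out content of $T$ being a 2-functor, which the paper takes as known.
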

\begin{proof}
  The homomorphism $T\rho$ is obtained by composing the functor $\rho$
  with the tangent 2-functor $T:\LieGpd\to \LieGpd$.
\end{proof}
\begin{notation}
We denote the 2-vector space underlying the Lie 2-algebra of vector
fields on a Lie groupoid $K$ by the same symbol $\X(K)$.  
\end{notation}
\begin{definition}[the 2-group $\mathrm{GL}(V)$]
  Let $V$ be a 2-vector space.  We define the 2-group $\mathrm{GL}(V)$ of
  automorphisms of a 2-vector space $V$ as follows.
The group of objects $\mathrm{GL}(V)_0$ consists of strictly
invertible linear  (i.e., internal) functors $f:V\to V$.  The group operation on 
 $\mathrm{GL}(V)_0$ is the composition of functors.
The group of morphisms $\mathrm{GL}(V)_1$
is the group of internal natural isomorphisms under  vertical composition.
The composition homomorphism $*: \mathrm{GL}_1\times_{ \mathrm{GL}_0}
 \mathrm{GL}_1\to  \mathrm{GL}_1$ is the  horizontal  composition of
natural isomorphisms.  There are also evident source, target and unit maps:
\[
s(f\stackrel{\alpha}{\Rightarrow}g) = f, \quad
t(f\stackrel{\alpha}{\Rightarrow}g) =g, \quad 1 (f) =
(f\stackrel{\id_f}{\Rightarrow}f).
\]
\end{definition}

\begin{lemma} \label{lem:2.2.1} Let $G$ be a Lie 2-group and $K$ a Lie
  groupoid.  A homomorphism $\varphi:G\to \Aut(K)$ of 2-groups (i.e.,
  a functor internal to the category of groups) gives
  rise to a homomorphism of 2-groups
\[
\Phi:G\to \mathrm{GL}(\X(K)),
\]
a representation of the 2-group $G$ on the 2-vector space of vector
fields on the Lie groupoid $K$.
\end{lemma}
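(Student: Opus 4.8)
The plan is to build $\Phi$ by post-composing $\varphi$ with a homomorphism of 2-groups $\X:\Aut(K)\to \mathrm{GL}(\X(K))$ that records the induced action of (internal) automorphisms of $K$ on the category $\X(K)$ of multiplicative vector fields. Concretely, to an object $f\in \Aut(K)_0$ (a strictly invertible smooth functor $f:K\to K$) I would assign the functor $\X(f):\X(K)\to \X(K)$ acting on a multiplicative vector field $v:K\to TK$ by $\X(f)(v) := Tf\circ v\circ f\inv$ and on an arrow $\alpha:v\Rightarrow w$ of $\X(K)$ by the whiskering $\X(f)(\alpha) := Tf\,\alpha\, f\inv$ (using the notation for whiskering from the summary). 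The first thing to check is that these data are well-defined: that $Tf\circ v\circ f\inv$ is again a multiplicative vector field, i.e. a functor $K\to TK$ splitting $\pi_K$, and that $Tf\,\alpha\,f\inv$ is again an arrow of $\X(K)$, i.e. a natural transformation whose components are units under $\pi_K$. Both follow from the facts recalled in the excerpt: $Tf$ is an internal functor, $\pi:T\Rightarrow\id_{\LieGpd}$ is 2-natural so $\pi_K\circ Tf = f\circ\pi_K$, and $f$ strictly invertible gives $\pi_K\circ(Tf\circ v\circ f\inv) = f\circ\pi_K\circ v\circ f\inv = f\circ f\inv = \id_K$; the statement for $\alpha$ is the same computation applied componentwise, using $\pi_K(\alpha(x))=1_x$.

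Next I would check functoriality and the 2-group structure. That $\X(f)$ is a functor $\X(K)\to\X(K)$ (compatible with vertical composition $\circ_v$ of arrows and with units $\bbm{1}$) is a direct interchange-law computation: whiskering preserves vertical composition, and $\X(f)(\bbm{1}_v)$ unwinds to $\bbm{1}_{\X(f)(v)}$ using $\pi_K\circ Tf = f\circ\pi_K$ again. That $\X$ respects the group operations is exactly the functoriality of $T$ together with functoriality of whiskering: $\X(g\circ f) = \X(g)\circ\X(f)$ because $T(g\circ f)=Tg\circ Tf$ and $(g\circ f)\inv = f\inv\circ g\inv$, and horizontal composition of natural isomorphisms in $\Aut(K)$ is carried to horizontal composition in $\mathrm{GL}(\X(K))$ (here one invokes Remark~\ref{rmrk:3.7.1} on both sides and matches the whiskered composites). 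The images are genuinely strictly invertible: $\X(f)\circ\X(f\inv)=\X(\id_K)=\id_{\X(K)}$, and the natural isomorphism assigned to an arrow $f\Rightarrow g$ of $\Aut(K)$ is invertible because $K$ (hence $TK$) is a groupoid. Source, target, and unit maps are respected by inspection of the definitions of $\Aut(K)$ and $\mathrm{GL}(\X(K))$. Finally, set $\Phi := \X\circ\varphi$; it is a homomorphism of 2-groups as a composite of two such, and $\Phi(x) = \lambda$-type functor, $\Phi(\gamma) =$ the associated natural isomorphism, exhibiting $G$ as represented on $\X(K)$.

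I expect the only real point requiring care — rather than an obstacle — is the verification that $\X$ sends \emph{horizontal} composition in $\Aut(K)_1$ to horizontal composition in $\mathrm{GL}(\X(K))_1$, since this mixes whiskering by $Tf$ on the left with whiskering by $f\inv$ on the right and one must track the interchange law for the resulting 2-cells of $\X(K)$; but this is the same bookkeeping as in the proof of Lemma~\ref{lem:3.2.1}, carried out one dimension up, and it goes through by the 2-functoriality of $T$ and the compatibility of $\pi_K$ with all the structure. Everything else is routine once the well-definedness in the first paragraph is in place. Note that when $\varphi = \hat{\ba}$ arises from an action $\ba:G\times K\to K$ as in Lemma~\ref{lem:3.2.1}, the resulting $\Phi$ has $\Phi(x)(v) = T\hat{\ba}(x)\circ v\circ \hat{\ba}(x)\inv = T\hat{\ba}(x)\circ v\circ \hat{\ba}(x\inv)$, matching the formula for $\lambda(x)$ in the introduction when $K=G$ and $\ba=m$; this is the representation $\lambda:G\to\mathrm{GL}(\X(G))$ used in Theorem~\ref{thm:main}.
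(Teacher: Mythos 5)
Your proposal is correct, and the substance is the same as the paper's: the formulas $\Phi(x)(v)=T\varphi(x)\circ v\circ\varphi(x\inv)$, the whiskered $\Phi(x)(\alpha)$, and the horizontally composed 2-cells are exactly what the paper writes, and the verifications you list (well-definedness via $\pi_K\circ T\varphi(x)=\varphi(x)\circ\pi_K$, compatibility with vertical composition, and the homomorphism identity on arrows via Remark~\ref{rmrk:3.7.1}) are the ones the paper carries out. The one genuine difference is packaging: you factor $\Phi=\X\circ\varphi$ through a universal homomorphism $\X:\Aut(K)\to\GL(\X(K))$, whereas the paper constructs $\Phi$ directly from $\varphi$. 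Since every step of the paper's proof uses only that $\varphi(x)\in\Aut(K)_0$, $\varphi(\gamma)\in\Aut(K)_1$ and that $\varphi$ is a homomorphism, your extra generality costs nothing and buys modularity; it is also legitimate even though $\Aut(K)$ is merely an abstract 2-group, because only homomorphisms of 2-groups (nothing smooth) are being claimed. Two points you leave implicit and should write out to match the paper's level of completeness: (a) the value of $\X$ on an arrow $\beta:f\Rightarrow g$ of $\Aut(K)$ must be specified — the horizontal composite $T\beta\, v\, \beta\inv$ (with $\beta\inv$ the inverse for the group structure on $\Aut(K)_1$), corresponding to the paper's $\Phi(\gamma)v=T\varphi(\gamma)\,v\,\varphi(\gamma\inv)$ — together with the interchange-law check that this is natural in $v$, i.e.\ that the square formed by $\Phi(x)\alpha$, $\Phi(y)\alpha$, $\Phi(\gamma)v$, $\Phi(\gamma)w$ commutes; and (b) membership in $\GL(\X(K))$ requires \emph{linearity}: the paper checks that $\Phi(x)$ is linear on objects and on arrows using the fiberwise linearity of $T\varphi(x)_0$ and $T\varphi(x)_1$, and that $\Phi(\gamma):\X(K)_0\to\X(K)_1$ is linear. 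Both are routine, so these are omissions of detail rather than gaps in the argument.
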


\begin{proof}
  As a first step given an object $x$ of $G$ we would like to define a
  functor $\Phi(x):\X(K)\to \X(K)$ by setting
\[
\Phi (x) (v\stackrel{\alpha}{\Rightarrow} w) = \xy
(-20,0)*+{TK}="1";
(-5,0)*+{TK}="2";
( 10,0)*+{K}="3";
(25,0)*+{G}="4";
{\ar@{->}_{T\varphi(x)} "2";"1"};
{\ar@{->}_{\varphi({x\inv})} "4";"3"};
{\ar@/^1.pc/^{w} "3";"2"};
{\ar@/_1.pc/_{v} "3";"2"};
{\ar@{=>}^<<<{\scriptstyle \alpha} (3,3)*{};(3,-3)*{}} 
\endxy.
\]
for all arrows $v\stackrel{\alpha}{\Rightarrow}w$ in the 2-vector space $\X(K)$.
An object of $\X(K)$ is a functor $v:K\to TK$ with $\pi\circ v =
\id_K$.  Since $\varphi(x\inv )$ and $T\varphi(x)$ are functors,
\[
\Phi(x)v:= T\varphi(x)\circ v \circ \varphi(x\inv)
\]
is a functor.  Moreover 
\begin{eqnarray*}
\pi \circ (\Phi(x)v)&=& \pi \circ  T\varphi(x)\circ v \circ \varphi(x\inv)\\
&=& \varphi(x) \circ \pi \circ v \circ \varphi(x\inv)\qquad 
\textrm{ since }\pi \circ T\varphi = \varphi \circ \pi \\
&=&\varphi(x)\circ \id_K \circ \varphi(x\inv) = \id_K.
\end{eqnarray*}
Hence $\Phi(x)v$ is an object of $\X(K)$ for all $x\in G_0$ and all
$v\in \X(K)_0$.

An arrow in $\X(K)$ from an object $v$ to an object $w$ is a natural
transformation $\alpha:v\Rightarrow w$ with $\pi \alpha =1_{\id_K}$.
Now since $\Phi(x)\alpha$ is obtained from a natural transformation
$\alpha$ by whiskering with functors (namely $\Phi(x)\alpha
=T\varphi(x)\alpha \varphi(x\inv)$), $\Phi(x)\alpha$ is a natural
transformation from $\Phi(x)v$ to $\Phi(x)w$.  Additionally
\begin{eqnarray*}
\pi(\Phi(x)\alpha)&=& \pi T\varphi(x) \alpha \varphi(x\inv)\\
&=& \varphi(x) \pi  \alpha \varphi(x\inv)\\
&=& \varphi(x)1_{\id_K}\varphi(x\inv) = 1_{\id_K}.
\end{eqnarray*}
Hence $\Phi(x)\alpha$ is an arrow in the 2-vector space $\X(K)$.
Finally the purported functor $\Phi(x)$ preserves composition of
arrows because whiskering by functors commutes with the vertical
composition of natural transformations.  
We conclude that $\Phi(x):\X(K)\to \X(K)$ is a well-defined functor.   

Since the components $T\varphi(x)_0:TK_0 \to TK_0$ and
$T\varphi_1:TK_1\to TK_1$ are fiberwise linear, for any scalars
$c,d\in \R$ and any two multiplicative vector fields $v,w:K\to TK$
\[
\Phi(x)( c v + dw) = c \Phi(x) v + d \Phi(x) w.
\]
Similarly for any two arrows $\alpha_1:v_1\Rightarrow w_1$,
$\alpha_2:v_2\Rightarrow w_2$, any two scalars $c_1, c_2\in \R$ and any
object $a$ of $K$
\begin{eqnarray*}
\Phi(x)(c_1 \alpha_1+ c_2 \alpha_2)\, (a)
&=& T\varphi(x) (c_1 \alpha_1 (\varphi(x\inv )(a) 
+c_2 \alpha_2 (\varphi(x\inv )(a))\\ 
&=& c_1 (T\varphi(x) \alpha_1 \varphi(x\inv ))(a) 
+c_2 ( T\varphi(x)\alpha_2 \varphi(x\inv ))(a)\\
&=& (c_1 \Phi(x)\alpha_1 + c_2 \Phi(x)\alpha_2)(a).
\end{eqnarray*}
We conclude that $\Phi(x)$ is a 1-morphism of 2-vector spaces.\\

Given an arrow $x\xleftarrow{\gamma} y$ in $G$ we would like to define
a natural transformation $\Phi(\gamma):\Phi(x)\Rightarrow \Phi(y)$ by
setting
\[
\Phi(\gamma)v:= 
\xy
(-30,0)*+{TK}="1";
(-5,0)*+{\,\,TK}="2";
( 10,0)*+{K}="3";
(35,0)*+{K}="4";
{\ar@{->}_{v} "3";"2"};
{\ar@/^1.pc/^{T\varphi(x)} "2";"1"};
 {\ar@/_1.pc/_{T\varphi(y)} "2";"1"};
 {\ar@/^1.pc/^{\varphi(x\inv)} "4";"3"};
{\ar@/_1.pc/_{\varphi(y\inv)} "4";"3"};
{\ar@{=>}^<<<<{\scriptstyle T\varphi(\gamma)} (-20,3)*{};(-20,-3)*{}}; 
{\ar@{=>}^<<<<{\scriptstyle \varphi(\gamma\inv)} (19,3)*{};(19,-3)*{}}; 
\endxy
\]
for all multiplicative vector fields $v:K\to TK$.  By construction
$\Phi(\gamma)v$ is a natural transformation from $T\varphi(x)\circ v
\circ \varphi(x\inv) = \Phi(x)v$ to $T\varphi(y)\circ v \circ
\varphi(y\inv) = \Phi(y)v$.  Moreover
\begin{eqnarray*}
\pi (\Phi(\gamma)v)&=& \pi T\varphi(\gamma)v\varphi(\gamma\inv)\\
&=& \varphi(\gamma) (\pi \circ v) \varphi(\gamma\inv)\\
&=& \varphi(\gamma)\circ_{vert} 1_{\id_K}\circ_{vert} \varphi(\gamma\inv)\\
&= & 1_{\id_K}  \qquad (\textrm{ since }\varphi \textrm{ is a homomorphism}).
\end{eqnarray*}
We conclude that for any multiplicative vector field $v$ the natural
transformation $\Phi(\gamma)v$ is an arrow in the 2-vector space
$\X(K)$.   

It is easy to check that $\Phi(\gamma):\X(K)_0 \to \X(K)_1$ is linear.
We now check that $\Phi(\gamma)$ is an actual natural transformation
from $\Phi(x)$ to $\Phi(y)$. That is, we check that for any arrow
$v\stackrel{\alpha}{\Rightarrow}w$ in $\X(K)$ the diagram
\[
\xy
(-15,10)*+{\Phi(x)w}="1";
(15,10)*+{\Phi(x)v}="2";
( -15,-10)*+{\Phi(y)w}="3";
(15,-10)*+{\Phi(y)v}="4";
{\ar@{=>}^{\Phi(x)\alpha} "2";"1"};
{\ar@{=>}_{\Phi(\gamma)w} "1";"3"};
{\ar@{=>}^{\Phi(\gamma)v} "2";"4"};
{\ar@{=>}_{\Phi(y)\alpha} "4";"3"};
\endxy
\]
commutes in the category $\X(K)$.  By definition the composition of the arrows
$\Phi(\gamma)w$ and $ \Phi(x)\alpha$ is the vertical composition
of the diagrams
\[
\xy
(-20,0)*+{TK}="1";
(-5,0)*+{TK}="2";
( 10,0)*+{K}="3";
(25,0)*+{G}="4";
{\ar@{->}_{T\varphi(x)} "2";"1"};
{\ar@{->}_{\varphi({x\inv})} "4";"3"};
{\ar@/^1.pc/^{w} "3";"2"};
{\ar@/_1.pc/_{v} "3";"2"};
{\ar@{=>}^<<<{\scriptstyle \alpha} (3,3)*{};(3,-3)*{}} 
\endxy
\]
and 
\[
\xy
(-30,0)*+{TK}="1";
(-5,0)*+{\,\,TK}="2";
( 10,0)*+{K}="3";
(35,0)*+{K}="4";
{\ar@{->}_{w} "3";"2"};
{\ar@/^1.pc/^{T\varphi(x)} "2";"1"};
 {\ar@/_1.pc/_{T\varphi(y)} "2";"1"};
 {\ar@/^1.pc/^{\varphi(x\inv)} "4";"3"};
{\ar@/_1.pc/_{\varphi(y\inv)} "4";"3"};
{\ar@{=>}^<<<<{\scriptstyle T\varphi(\gamma)} (-20,3)*{};(-20,-3)*{}}; 
{\ar@{=>}^<<<<{\scriptstyle \varphi(\gamma\inv)} (19,3)*{};(19,-3)*{}}; 
\endxy
\]
which is 
\[
T\varphi(\gamma)\circ_{hor} \alpha \circ _{hor} \varphi(\gamma\inv).
\]
Similarly
\[
\Phi(y)\alpha\circ_{vert} \Phi(\gamma)\alpha 
= T\varphi(\gamma)\circ_{hor} \alpha \circ _{hor} \varphi(\gamma\inv)
\]
as well.  Therefore $\Phi(\gamma):\Phi(x)\Rightarrow \Phi(y)$ is a
2-morphism of 2-vector spaces.\\

We finish the proof by checking that $\Phi$ is a homomorphism of
2-groups.  Clearly $\Phi(e_0) = \id_{\X(K)}$ and $\Phi(1_{e_0}) = 1_{
  \id_{\X(K)}}$.  For any two objects $x_2, x_1$ of $G$
$\varphi(x_2\cdot x_1) = \varphi(x_2)\circ \varphi(x_1)$ and $T\varphi(x_2\cdot x_1) = T\varphi(x_2)\circ T\varphi(x_1)$ 
Consequently for any multiplicative vector field $v$
\begin{eqnarray*}
  \Phi(x_2\cdot x_1)v 
&=& T(\varphi(x_2\cdot x_1)\circ v \circ \varphi((x_2\cdot x_1)\inv)\\
&=&T\varphi(x_2)\circ T\varphi(x_1)\circ v \circ 
\varphi(x_2\inv)\circ \varphi(x_1\inv)\\
&=& \Phi(x_2)(\Phi(x_1)v).
\end{eqnarray*}
Checking that $\Phi(\gamma_2\cdot \gamma_1) = \Phi(\gamma_2)\circ
_{hor} \Phi(\gamma_1)$ is a bit more involved.
Note first that 
\[
\varphi (\gamma_2 \cdot \gamma_1) = \varphi(\gamma_2) \circ_{hor}
\varphi(\gamma_1)
\]
since $\varphi$ is a homomorphism (1-morphism) of 2-groups.   Similarly
\[
T\varphi (\gamma_2 \cdot \gamma_1) =
 T\varphi(\gamma_2) \circ_{hor} T\varphi(\gamma_1) .
\]
Recall that the arrows in the category $\mathrm{GL}(\X(K))$ are
natural isomorphisms, and that the composition of arrows in
$\mathrm{GL}(\X(K))$ is the vertical composition.  Hence by
Remark~\ref{rmrk:3.7.1} for any object $u$ of $\X(K)$,
\[
(\Phi(\gamma_2)\circ_{hor}\Phi(\gamma_1))(u)
= \left(\Phi(\gamma_2)(\Phi(y_1) u)   \right) \circ_{vert}
 \left(\Phi(x_2)(\Phi(\gamma_1) u)   \right).    
\]
Since
\[
(\Phi(x_2)(\Phi(\gamma_1) u) \quad =\qquad
\xy
(-50,0)*+{TK}="0";
(-30,0)*+{TK}="1";
(-5,0)*+{\,\,TK}="2";
( 10,0)*+{K}="3";
(35,0)*+{K}="4";
(54,0)*+{K}="5";
{\ar@{->}_{T\varphi (x_2)} "1";"0"};
{\ar@{->}_<<<{u} "3";"2"};
{\ar@/^1.pc/^{T\varphi(x_1)} "2";"1"};
 {\ar@/_1.pc/_{T\varphi(y_1)} "2";"1"};
 {\ar@/^1.pc/^{\varphi(x_1\inv)} "4";"3"};
{\ar@/_1.pc/_{\varphi(y_1\inv)} "4";"3"};
{\ar@{->}_{\varphi(x_2\inv)} "5";"4"};
{\ar@{=>}^<<<<{\scriptstyle T\varphi(\gamma_1)} (-20,3)*{};(-20,-3)*{}}; 
{\ar@{=>}^<<<<{\scriptstyle \varphi(\gamma_1\inv)} (19,3)*{};(19,-3)*{}}; 
\endxy
\]
and 
\[
\Phi(\gamma_2)(\Phi(y_1) u) \quad =\qquad
\xy
(-50,0)*+{TK}="0";
(-24,0)*+{TK}="1";
(-5,0)*+{\,\,TK}="2";
( 10,0)*+{K}="3";
(29,0)*+{K}="4";
(55,0)*+{K}="5";
{\ar@{->}_{T\varphi (y_1)} "2";"1"};
{\ar@{->}_<<<{u} "3";"2"};
{\ar@/_1.pc/_{T\varphi(x_2)} "1";"0"};
 {\ar@/^1.pc/^{T\varphi(y_2)} "1";"0"};
 {\ar@/^1.pc/^{\varphi(x_2\inv)} "5";"4"};
{\ar@/_1.pc/_{\varphi(y_2\inv)} "5";"4"};
{\ar@{->}_{\varphi(y_1\inv)} "4";"3"};
{\ar@{=>}^<<<<{\scriptstyle T\varphi(\gamma_2)} (-40,3)*{};(-40,-3)*{}}; 
{\ar@{=>}^<<<<{\scriptstyle \varphi(\gamma_2\inv)} (40,3)*{};(40,-3)*{}}; 
\endxy,
\]
\begin{eqnarray*}
\left(\Phi(\gamma_2)(\Phi(y_1) u)   \right) \circ_{vert}
 \left(\Phi(x_2)(\Phi(\gamma_1) u)   \right) 
&=&
\left( T\varphi(\gamma_2)\circ_{hor} T\varphi(\gamma_1)\right)u
\left(\varphi(\gamma_1\inv)\circ_{hor} \varphi(\gamma_2\inv)\right)
\\ 
&=& (T\varphi)(\gamma_2\cdot \gamma_1)\, u\, 
\varphi((\gamma_2\cdot \gamma_1)\inv)\\ 
&=&  \Phi(\gamma_2\cdot \gamma_1)u.
\end{eqnarray*}
We conclude that $\Phi(\gamma_2\cdot \gamma_1) = \Phi(\gamma_2)\circ
_{hor} \Phi(\gamma_1)$ for all arrows $\gamma_2, \gamma_1$ of the Lie
2-group $G$.
It now follows that $\Phi: G\to \mathrm{GL}(\X(K))$ is a homomorphism
of 2-groups.
\end{proof}
\mbox{}

We are now in position to construct the representation $\lambda:G\to
\mathrm{GL}(\X(G))$ of a Lie 2-group $G$ on its 2-vector space $\X(G)$
of vector fields coming from the multiplication on the left. 

\begin{lemma}\label{lem:3.13}
Left multiplication on a Lie 2-group $G$ induces a homomorphism of 2-groups
\[
\lambda: G\to \mathrm{GL}(\X(G))
\]
from $G$ to the 2-group $\mathrm{GL}(\X(G))$ of
automorphisms of the 2-vector space of vector fields on the Lie groupoid $G$.
For each object $x$ of $G$, $\lambda(x):\X(G)\to \X(G)$ is a linear
functor with
\[
\lambda(x)\,(v\stackrel{\alpha}{\Rightarrow}w) =
\xy
(-20,0)*+{TG}="1";
(-5,0)*+{TG}="2";
( 10,0)*+{G}="3";
(25,0)*+{G}="4";
{\ar@{->}_{TL_x} "2";"1"};
{\ar@{->}_{L_{x\inv}} "4";"3"};
{\ar@/^1.pc/^{w} "3";"2"};
{\ar@/_1.pc/_{v} "3";"2"};
{\ar@{=>}^<<<{\scriptstyle \alpha} (3,3)*{};(3,-3)*{}} 
\endxy
\]
for each arrow $v\stackrel{\alpha}{\Rightarrow}w$ of $G$.  Here as
before $L:G\to \Aut(G)$ is the homomorphism of 2-groups induced by
multiplication on the left.  For each arrow $x\xrightarrow{\gamma}y$
of $G$, $\lambda(\gamma):\lambda(x)\to \lambda(y)$ is a natural
isomorphism with
\[
\lambda(\gamma)v =
\xy
(-25,0)*+{TG}="1";
(-5,0)*+{\,\,TG}="2";
( 10,0)*+{G}="3";
(30,0)*+{G}="4";
{\ar@{->}_{v} "3";"2"};
{\ar@/^1.pc/^{TL_x} "2";"1"};
{\ar@/_1.pc/_{TL_y} "2";"1"};
{\ar@/^1.pc/^{L_{x\inv}} "4";"3"};
{\ar@/_1.pc/_{L_{y\inv}} "4";"3"};
{\ar@{=>}^<<<<{\scriptstyle TL_\gamma} (-15,3)*{};(-15,-3)*{}}; 
{\ar@{=>}^<<<<{\scriptstyle L_{\gamma\inv}} (19,3)*{};(19,-3)*{}}; 
\endxy
\]
for all objects $v$ of the 2-vector space $\X(G)$.
\end{lemma}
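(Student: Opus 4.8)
The plan is to read $\lambda$ off the general machinery already assembled, so that the lemma becomes essentially a corollary of Lemma~\ref{lem:2.2.1}. First I note that by Corollary~\ref{cor:2.7} the Lie 2-group $G$ is in particular a Lie groupoid, so the 2-vector space $\X(G)$ of multiplicative vector fields on $G$ is defined. Next, by Corollary~\ref{lem:2.2.0}, left multiplication furnishes a homomorphism of 2-groups $L\colon G\to\Aut(G)$, with $L(x\xrightarrow{\gamma}y) = (L_x\stackrel{L_\gamma}{\Rightarrow}L_y)$, where $L_x(\sigma)=1_x\cdot\sigma$ and $L_\gamma(a)=\gamma\cdot 1_a$. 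Applying Lemma~\ref{lem:2.2.1} with the Lie groupoid $K:=G$ and the homomorphism $\varphi:=L$ produces a homomorphism of 2-groups $\Phi\colon G\to\mathrm{GL}(\X(G))$; I set $\lambda:=\Phi$. This already yields every assertion of the lemma except the two displayed formulas, which I will match against the construction carried out inside the proof of Lemma~\ref{lem:2.2.1}.

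To obtain the formulas I unwind that construction with $\varphi=L$. There, $\Phi(x)$ sends an arrow $v\stackrel{\alpha}{\Rightarrow}w$ of $\X(K)$ to the whiskering $T\varphi(x)\,\alpha\,\varphi(x\inv)$ (and an object $v$ to $T\varphi(x)\circ v\circ\varphi(x\inv)$). Since $\varphi(x)=L_x$ and $\varphi(x\inv)=L_{x\inv}$, and since $T\varphi$ is obtained by post-composing $\varphi$ with the tangent 2-functor $T\colon\LieGpd\to\LieGpd$, so that $T\varphi(x)=TL_x$, the expression $T\varphi(x)\,\alpha\,\varphi(x\inv)$ becomes exactly $TL_x\,\alpha\,L_{x\inv}$, i.e.\ the diagram displayed for $\lambda(x)$. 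In the same way $\Phi(\gamma)v$ is the whiskered 2-cell assembled from $T\varphi(\gamma)$, the identity on $v$, and $\varphi(\gamma\inv)$; substituting $\varphi(\gamma)=L_\gamma$, $\varphi(\gamma\inv)=L_{\gamma\inv}$ and $T\varphi(\gamma)=TL_\gamma$ gives precisely the diagram displayed for $\lambda(\gamma)v$. Linearity of the functors $\lambda(x)$ and of the transformations $\lambda(\gamma)$, together with the statement that $\lambda$ is a homomorphism of 2-groups, are all already contained in the conclusion of Lemma~\ref{lem:2.2.1}, so nothing further needs to be proved.

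The one point worth a word of care is the bookkeeping of inverses: $x\inv$ and $\gamma\inv$ are formed in the Lie groups $G_0$ and $G_1$, not categorically. Because the source and target maps $s,t\colon G_1\to G_0$ are group homomorphisms, an arrow $x\xrightarrow{\gamma}y$ satisfies $\gamma\inv\colon x\inv\to y\inv$; hence $L_{\gamma\inv}\colon L_{x\inv}\Rightarrow L_{y\inv}$ and $TL_\gamma\colon TL_x\Rightarrow TL_y$ are composable in the right way, and the whiskered 2-cell is a morphism $\lambda(\gamma)v\colon\lambda(x)v\Rightarrow\lambda(y)v$, as required. I do not anticipate any genuine obstacle here: the substance of the argument resides in Lemma~\ref{lem:2.2.1} and Corollary~\ref{lem:2.2.0}, both already established, and what remains is a direct specialization plus a notational translation.
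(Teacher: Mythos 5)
Your proposal is correct and follows essentially the same route as the paper: the paper's own proof is exactly the two-step citation of Corollary~\ref{lem:2.2.0} (left multiplication gives $L:G\to\Aut(G)$) followed by Lemma~\ref{lem:2.2.1} applied with $K=G$ and $\varphi=L$. Your additional unwinding of the whiskering formulas and the check that $\gamma\inv:x\inv\to y\inv$ is a harmless (and welcome) elaboration of what the paper leaves implicit.
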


\begin{proof}
by Corollary~\ref{lem:2.2.0} multiplication on $G$ gives rise to a
homomorphism of 2-groups $L:G\to \Aut(G)$.  By Lemma~\ref{lem:2.2.1}
the homomorphism $L$ gives rise to the homomorphism $\lambda: G\to
\mathrm{GL}(\X(G))$.
\end{proof}

\section{A map of Lie 2-algebras $p: \fg \to \X(G)$} \label{sec:p}

Recall that to a Lie 2-group $G= \{G_1\toto G_0\}$ one can associate a
Lie 2-algebra $\fg= \{\fg_1\toto \fg_0\}$ by applying the $\Lie$
functor to the Lie group $G_0$ of objects, the Lie group $G_1$ of
morphisms and to the structure maps of $G$.  That is, $\fg_0 =
T_{e_0}G_0$, $\fg_1 = T_{e_1}G_1$ and so on.  In this section we
prove:

\begin{theorem} \label{thm:4.1} 
Let $G$ be a Lie 2-group and $\fg$ the associated Lie 2-algebra
There is a morphism of Lie 2-algebras
  $p:\fg\to \X(G)$ from the Lie 2-algebra $\fg$ to the Lie 2-algebra
  of multiplicative vector fields $\X(G)$.  Moreover the functor $p$
  is injective on objects and fully faithful.
\end{theorem}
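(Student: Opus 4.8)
The plan is to build $p$ out of the classical identification of a Lie algebra with left-invariant vector fields, applied level-by-level and then checked to be compatible with all the groupoid structure. Concretely, I would first define $p_0:\fg_0 = T_{e_0}G_0 \to \X(G)_0$ and $p_1:\fg_1 = T_{e_1}G_1 \to \X(G)_1$ and then verify three things: (i) each $p_i(\xi)$ is (a component of) a multiplicative vector field resp.\ an arrow of $\X(G)$; (ii) $p_0,p_1$ intertwine the structure maps $s,t,1,\ast$ on the two sides, so that $p=(p_0,p_1)$ is an internal functor; (iii) $p_0,p_1$ are Lie algebra homomorphisms, so that $p$ is a morphism of Lie 2-algebras; and finally (iv) $p$ is injective on objects and fully faithful. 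For step (i) the natural guess is to send $\xi\in\fg_0=T_{e_0}G_0$ to the pair consisting of the left-invariant vector field on $G_0$ generated by $\xi$ together with the left-invariant vector field on $G_1$ generated by an appropriate element of $\fg_1$ associated to $\xi$; more precisely, I expect $p_0(\xi) = (\xi^L, (T1\,\xi)^L)$, where $(\cdot)^L$ denotes the left-invariant extension on $G_0$ resp.\ $G_1$ and $T1:\fg_0\to\fg_1$ is the derivative of the unit map. One must check this is a multiplicative vector field, i.e.\ a functor $G\to TG$; the key input is Lemma~\ref{lem:2.3} (equation~\eqref{eq:1}), which says multiplication and composition in $G$ interact the way they must, and the fact that left-invariant vector fields are intertwined by group homomorphisms (here $s,t,1$), so that compatibility with $s,t,1$ is automatic and compatibility with $\ast$ follows from \eqref{eq:1} differentiated at the identity. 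The same recipe on $\fg_1$ defines $p_1$.

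For step (ii), compatibility with $s,t:\fg_1\to\fg_0$ and $1:\fg_0\to\fg_1$ follows because these maps are $\Lie$ applied to the corresponding Lie group homomorphisms $s,t:G_1\to G_0$, $1:G_0\to G_1$, and $\Lie$ of a homomorphism sends a left-invariant vector field to the left-invariant vector field generated by the differential — so the square relating $p$ and $s$ (resp.\ $t$, $1$) commutes on generators, hence everywhere by linearity. Compatibility with composition $\ast:\fg_2\to\fg_1$ against $\circ_v$ in $\X(G)_1$ is the one genuinely computational point: one uses Lemma~\ref{lem:1} ($\sigma\ast\gamma = \gamma\cdot 1_{b\inv}\cdot\sigma$), which expresses $\ast$ purely in terms of the group multiplications and the unit, differentiates it, and matches it with the definition of $\circ_v$ via the derivative $\star$ of $\ast$. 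Because left-invariant vector fields are determined by their value at the identity and behave predictably under multiplication maps, this reduces to an identity in $\fg_1$ that again comes from differentiating \eqref{eq:1}.

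For step (iii), the bracket on $\fg_0,\fg_1$ is by definition the Lie-algebra bracket coming from left-invariant vector fields on $G_0$, $G_1$, while the bracket on $\X(G)_0$ is the componentwise bracket $[(u_0,u_1),(v_0,v_1)] = ([u_0,v_0],[u_1,v_1])$; since $p_0$ sends $\xi$ to a pair of genuine left-invariant vector fields and the bracket of left-invariant fields is the left-invariant field of the bracket, $p_0$ is a Lie algebra map more or less by construction. For $p_1$ I would route through the map $J:\X(G)_1\to\calX(G_1)$ of \eqref{eq:J}: one checks $J\circ p_1$ is exactly the left-invariant-extension map $\fg_1\to\calX(G_1)$ (or differs from it by the harmless $\bbm{1}$/$\bbm{s}$ decomposition bookkeeping), and since $J$ is injective and bracket-preserving by definition, $p_1$ is a Lie algebra homomorphism. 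Finally, injectivity on objects is immediate because $\xi\mapsto\xi^L$ is injective on $\fg_0$ (evaluate at $e_0$), and fully faithful means $p$ induces a bijection $\{\text{arrows }p_0(\xi)\Rightarrow p_0(\eta)\} \cong \{\text{arrows }\xi\rightrightarrows\eta\text{ in }\fg\}$; this follows from the corresponding statement for the underlying 2-vector spaces together with the fact that both $\fg$ and $\X(G)$ have ``source fibers'' that are torsors over their respective kernels, which $p_1$ identifies. The main obstacle I anticipate is step (ii)'s compatibility with composition: it is the one place where one cannot simply invoke functoriality of $\Lie$ but must combine Lemma~\ref{lem:1}, the definition of $\star$, and a careful tracking of how left-invariant vector fields on $G_1$ restrict and transform under the maps $R_\gamma$, $L_x$ appearing in the groupoid structure — essentially verifying that the differentiated version of $\sigma\ast\gamma = \gamma\cdot 1_{b\inv}\cdot\sigma$ is exactly the vertical-composition rule for arrows of $\X(G)$.
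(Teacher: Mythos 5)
Your strategy matches the paper's proof essentially point for point: the paper factors $p=q\circ\ell$, where $\ell$ is the left-invariant-extension isomorphism from $\fg$ onto the Lie 2-algebra $\scrL(G)$ of left-invariant fields, $q$ sends $u$ to the multiplicative field whose $G_1$-component is the left-invariant extension of $T1(u(e_0))$ (multiplicativity being checked via the interchange law in $TG$, i.e.\ the derivative of \eqref{eq:1}, as you propose), and bracket-compatibility on morphisms is proved exactly by your step (iii), namely by showing that $J$ applied to the image arrow recovers the left-invariant extension, so that $J\circ p$ is bracket-preserving and injective; injectivity on objects and fully faithfulness are then, as in your sketch, read off from evaluation at the identity. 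The one point you should state explicitly is the definition of $p$ on morphisms: an arrow of $\X(G)$ is a natural transformation $G_0\to TG_1$, not a vector field on $G_1$, so ``the same recipe on $\fg_1$'' has to mean $p(b)=b^L\circ 1$, the left-invariant extension of $b\in\fg_1$ restricted along the unit map---this is the paper's formula $q(\alpha)=\alpha\circ 1$, and it is what your claim that $J\circ p_1$ equals the left-invariant-extension map tacitly presupposes.
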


The theorem has an immediate corollary:
\begin{corollary}
  The image $p(\fg)$ of the functor $p:\fg \to \X(G)$ is a full Lie
  2-subalgebra of the Lie 2-algebra of vector fields $\X(G)$.  This
  2-subalgebra $p(\fg)$ is isomorphic to $\fg$.
\end{corollary}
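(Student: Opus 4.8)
The plan is to factor $p$ through the auxiliary Lie 2-algebra $\scrL(G)$, whose objects are the left-invariant vector fields on $G_0$ and whose arrows are the left-invariant vector fields on $G_1$. Since $s,t\colon G_1\to G_0$, $1\colon G_0\to G_1$ and $\ast\colon G_2\to G_1$ are homomorphisms of Lie groups, they carry left-invariant vector fields to left-invariant vector fields, so $\scrL(G)$ is a Lie 2-algebra and evaluation at the identities $e_0,e_1$ induces an isomorphism $\scrL(G)\xrightarrow{\sim}\fg$; hence it is enough to build $p\colon\scrL(G)\to\X(G)$ with the stated properties. I write $\xi^L$ for the left-invariant vector field (on $G_0$ or $G_1$) with $\xi^L(e)=\xi$. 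On objects I would send a left-invariant vector field $X$ on $G_0$ to the pair $p(X):=(X,\widetilde X)$, where $\widetilde X$ is the left-invariant vector field on $G_1$ with $\widetilde X(e_1)=T1\bigl(X(e_0)\bigr)$; on arrows I would send a left-invariant vector field $Y$ on $G_1$, with $\eta:=Y(e_1)$, to the ``left-invariant natural transformation'' $\alpha_Y$ defined by $\alpha_Y(x):=T\cL_{1_x}(\eta)\in T_{1_x}G_1$ for $x\in G_0$.

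First one must check $p(X)$ really is a multiplicative vector field, i.e. a functor $G\to TG$. Compatibility with the unit, source and target maps is immediate from $s\circ 1=t\circ 1=\id$ and from $1,s,t$ being homomorphisms. Compatibility with the composition $\ast$ is the only substantive point; here I would use \eqref{eq:1} together with the observation that $\eta:=T1(X(e_0))$ satisfies $Ts\,\eta=Tt\,\eta=X(e_0)$ (because $Ts\,T1=Tt\,T1=\id$), so that $t\mapsto(\sigma\cdot\exp t\eta,\ \gamma\cdot\exp t\eta)$ is a curve in $G_2$ whenever $(\sigma,\gamma)\in G_2$, and $\exp t\eta\ast\exp t\eta=\exp t\eta$ since $\exp t\eta=1_{\exp t(Ts\,\eta)}$. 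Likewise, checking that $\alpha_Y$ is an arrow of $\X(G)$ from $p\bigl((Ts\,\eta)^L\bigr)$ to $p\bigl((Tt\,\eta)^L\bigr)$ reduces, via \eqref{eq:1} and the unit laws for $\ast$, to an identity, and $p$ preserves composition because each $\cL_{1_x}\colon G_1\to G_1$ is a (groupoid) functor by Corollary~\ref{lem:2.2.0}, so $T\cL_{1_x}$ commutes with $\star=T\ast$. Linearity and the structural compatibilities $\bbm s\,p_1=p_0\,s$, $\bbm t\,p_1=p_0\,t$, $\bbm 1\,p_0=p_1\,1$ come out along the way, so $p$ is a $1$-morphism of $2$-vector spaces.

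To see $p$ is a morphism of Lie 2-algebras one checks that $p_0$ and $p_1$ are Lie algebra homomorphisms. For $p_0$ this is immediate, since the bracket on $\X(G)_0$ is componentwise and $T1$ is a Lie algebra map. For $p_1$ the plan is to compute the composite $J\circ p_1$ with the injection $J$ of \eqref{eq:J} and to show it is simply the inclusion of left-invariant vector fields into $\calX(G_1)$. Writing $\eta=Y(e_1)$ and $\eta'':=T1(Ts\,\eta)$, one gets $\bbm s(\alpha_Y)_1=(\eta'')^L$, while $\alpha_Y-\bbm 1_{\bbm s(\alpha_Y)}$ is the left-invariant section of the Lie algebroid $A_G$ with value $\eta-\eta''\in\ker T_{e_1}s$; since by Lemma~\ref{lem:1} the map $R_\gamma$ agrees on $s\inv(t(\gamma))$ with left translation by $\gamma\cdot 1_{t(\gamma)\inv}$, the formula \eqref{eq:j} yields $j\bigl(\alpha_Y-\bbm 1_{\bbm s(\alpha_Y)}\bigr)=(\eta-\eta'')^L$, whence $J(\alpha_Y)=(\eta-\eta'')^L+(\eta'')^L=\eta^L=Y$. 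Because $J$ is injective, $J[\alpha,\beta]=[J\alpha,J\beta]$ by the very definition of the bracket on $\X(G)_1$, and the left-invariant vector fields form a Lie subalgebra of $\calX(G_1)$ carrying the bracket of $\scrL(G)_1$, it follows that $p_1[\,Y,Y'\,]=[\,p_1Y,p_1Y'\,]$.

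Finally, $p$ is injective on objects because $p(X)_0=X$, and $p_1$ is injective on each hom-set because $\alpha_Y(e_0)=Y(e_1)$. For fullness, let $\alpha$ be an arrow of $\X(G)$ with $\bbm s(\alpha)=p(X)$ and $\bbm t(\alpha)=p(X')$; put $\eta:=\alpha(e_0)\in T_{e_1}G_1=\fg_1$, so that $Ts\,\eta=X(e_0)$, $Tt\,\eta=X'(e_0)$, and $Y:=\eta^L$ is an arrow of $\scrL(G)$ from $X$ to $X'$. Since $J$ is injective it suffices to show $J(\alpha)=\eta^L$, and as $J(\alpha)=j\bigl(\alpha-\bbm 1_{\bbm s(\alpha)}\bigr)+p(X)_1$ with $p(X)_1=\widetilde X$ already left-invariant, this reduces to proving that the algebroid section $\sigma:=\alpha-\bbm 1_{\bbm s(\alpha)}$ is left-invariant, i.e. $\sigma(b)=T\cL_{1_b}(\sigma(e_0))$ for all $b\in G_0$. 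The plan is to extract this from the naturality of $\alpha$: writing out the naturality square of $\alpha$ at an arrow $\tau$ and using Lemma~\ref{lem:1} to rewrite $\ast$ through left and right translations, one finds that $\sigma(t(\tau))$ is obtained from $\sigma(s(\tau))$ by a single left translation, which forces the left-invariance of $\sigma$. \textbf{Turning the naturality of $\alpha$ into left-invariance of $\sigma$ is the crux of the proof, and the place where Lemma~\ref{lem:1} is indispensable.} Granting this, $p$ is fully faithful and injective on objects, hence an isomorphism onto the full $2$-subalgebra spanned by its image, which is the content of the corollary.
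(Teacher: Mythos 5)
Your construction of $p$ is exactly the paper's: you factor through $\scrL(G)$, your $p(X)=(X,\widetilde X)$ and $\alpha_Y(x)=T\cL_{1_x}\bigl(Y(e_1)\bigr)$ are the paper's $q\circ\ell$, and your computation $J(\alpha_Y)=Y$ reproduces the paper's lemma $J(q(\alpha))=\alpha$; the Lie-algebra-map property, faithfulness and injectivity on objects are all fine. The genuine gap is precisely the step you flag as the crux: deducing left-invariance of the algebroid section $\sigma=\alpha-\bbm{1}_{\bbm{s}(\alpha)}$ from naturality of $\alpha$. Naturality only relates $\sigma(s(\tau))$ and $\sigma(t(\tau))$ for arrows $\tau$ of $G$, i.e.\ it constrains $\sigma$ only along the orbits of the groupoid $G$; by Corollary~\ref{cor:2.7} the orbit of $z\in G_0$ is $t(K)\cdot z$ with $K=\ker s$, so unless $t(K)=G_0$ no amount of rewriting via Lemma~\ref{lem:1} propagates the value $\sigma(e_0)$ to all of $G_0$. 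Concretely, take the $2$-group of the crossed module $0\colon\R\to\R$ with trivial action: $G_0=\R$, $G_1=\R\times\R$ (both additive), $s(k,x)=t(k,x)=x$, $1_x=(0,x)$, $(k_2,x)\ast(k_1,x)=(k_2+k_1,x)$. Then $p$ sends $c\,\partial_x$ to the pair consisting of $c\,\partial_x$ on $G_0$ and the constant field $(0,c)$ on $G_1$, and for \emph{every} smooth function $a\in C^\infty(\R)$ the assignment $\alpha_a(x)=\bigl((0,x),(a(x),c)\bigr)\in T_{1_x}G_1$ is a morphism $p(c\,\partial_x)\Rightarrow p(c\,\partial_x)$ in $\X(G)$: the source, target and $\pi\circ\alpha_a=1$ conditions are immediate, and every naturality square commutes because $\star$ is fiberwise addition over each $x$. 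Only constant $a$ lie in the image of $p$, so here naturality does not force left-invariance of $\sigma$, and your proposed derivation of fullness breaks down (indeed $\Hom_{\X(G)}(p(u),p(u))$ contains a copy of $\Gamma(A_G)\cong C^\infty(\R)$, while the corresponding hom-set in $\fg$ is finite-dimensional).

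For comparison, the paper deduces the corollary immediately from Theorem~\ref{thm:4.1}, where full faithfulness of $q$ is asserted ``by construction'' rather than by a naturality argument; the mechanism that actually pins down $p(\fg)$ inside $\X(G)$ in the paper is invariance under the representation $\lambda$, not naturality: Lemma~\ref{lem:4.1} and steps (i)--(ii) in the proof of Theorem~\ref{thm:5.2} show that the equations $\lambda(x)u=u$, $\lambda(\gamma)u=1_u$ and $\lambda(x)\alpha=\alpha$ force left-invariance, because there $x$ ranges over \emph{all} objects of $G_0$ (all left translations), whereas naturality only sees the arrows of the groupoid. So if you want your fullness argument to go through you must either assume $t\colon\ker s\to G_0$ is surjective (the groupoid $G$ is then transitive and your orbit argument does apply), or replace ``naturality of $\alpha$'' by ``$\lambda$-invariance of $\alpha$'' --- but the latter proves membership in the limit $\X(G)^G$, i.e.\ Theorem~\ref{thm:5.2}, and is not a proof of fullness of the inclusion $p(\fg)\hookrightarrow\X(G)$; as the example above shows, that fullness is problematic in general and should not be expected to follow from the naturality constraints alone.
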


We construct $p$ as a composite of two of functors.

\subsection{A  Lie 2-algebra $\scrL(G)$ associated to a Lie 2-group $G$}\mbox{}\\

Recall that to define a Lie bracket on the tangent space at the
identity $T_eH$ of a Lie group $H$ one identifies  $T_eH$ with the space of
left-invariant vector fields $\calX(H)^H$ on $H$.  

Similarly given a
Lie 2-group $G$ we define the Lie 2-algebra $\scrL(G)$ 
as follows.
We define the Lie algebra of objects $\scrL(G)_0$ of $\scrL(G)$ to be the
Lie algebra of left-invariant vector fields $\calX(G_0)^{G_0}$ on the
Lie group $G_0$.  We define the Lie algebra of morphisms $\scrL(G)_1$ to
be the Lie algebra $\calX(G_1)^{G_1}$ of left-invariant vector fields
on the Lie group $G_1$.  The source map $\mathbf{s}:\scrL(G)_1 \to
\scrL(G)_0$ is defined by setting the source of a vector field $\alpha
\in \scrL(G)_1$ to be the unique left-invariant vector field $v\in
\scrL(G)_0$ which is $s:G_1\to G_0$ related to $\alpha$.  The target map
$\mathbf{t}:\scrL(G)_1 \to \scrL(G)_0$ is defined similarly.  The unit map
$\mathbf{1}:\scrL(G)_0\to \scrL(G)_1$ is defined by setting $\mathbf{1}_u$
to be the unique left-invariant vector field on $G_1$ which is
$1:G_0\to G_1$ related to $u\in \scrL(G)_0$.  The composition 
\[
\circledast: \scrL(G)_1 \times_{\scrL(G)_0}\scrL(G)_1 \to \scrL(G)_1
\]
is defined pointwise; it is induced by the composition 
\[
\star: TG_1 \times_{TG_0} TG_1\to TG_1
\]
in the tangent groupoid $TG$ (recall that $\star = T\ast$, where
$\ast: G_1\times_{G_0}G_1 \to G_1$ is the composition in $G$).  Thus
$\circledast$ is defined by
\[
(\alpha\circledast\beta)\,(\gamma) := \alpha(\gamma)\star \beta(\gamma)
\]
for all arrows $\gamma\in G_1$.  Routine computations establish that
$\scrL(G)$ is indeed a Lie 2-algebra.

There is an evident functor
\[
\ell: \fg = \{T_{e_1} G_1\toto T_{e_0} G_0\} \to \scrL(G).
\]
which sends a vector $v\in \fg_0 = T_{e_0}G_0$ to the corresponding
left-invariant vector field $\ell(v)$ on the Lie group $G_0$ and an
arrow $\alpha:v\to w \in \fg_1 = T_{e_1}G_1$ to the corresponding
left-invariant vector field $\ell(\alpha)$ on the Lie group $G_1$. By
definition of the Lie brackets on $\fg_0$ and on $ \fg_1$ the maps
$\ell:\fg_0\to \scrL(G)_0$, $\ell: \fg_1\to \scrL(G)_1$ are Lie
algebra maps.  On the other hand $\ell$ is also an isomorphism of
categories --- its inverse is given by evaluation at the
identities: 
\[
\ell\inv (\beta: u\to u') = \beta(e_1): u(e_0)\to
u'(e_0).
\]
\mbox{}

We next
construct a functor $q: \scrL(G)\to \X(G)$.
Given a left-invariant vector field $u$ on the Lie group $G_0$ we
define a multiplicative vector field $q(u)$ as follows.  We take the
object part $q(u)_0:G_0\to TG_0$ to be $u$:
\begin{equation} \label{eq:3.1}
q(u)_0 := u.
\end{equation}
We define $q(u)_1:G_1\to TG_1$ by setting
\begin{equation} \label{eq:3.2}
q(u)_1 (\gamma) = (T\cL_\gamma \circ T1) \,( u(e_0))
\end{equation}
for all $\gamma\in G_1$.  Here and elsewhere in the paper $\cL_\gamma:
G_1\to G_1$ is the left multiplication by $\gamma$ and
$T\cL_\gamma:TG_1\to TG_1$ is its derivative.  It is clear that both
$q(u)_0$ and $q(u)_1$ are vector fields.  It is less clear that
$q(u):G\to TG$ is a functor.  

\begin{lemma}\label{lem:3.1}
  For any vector $u\in T_{e_0} G_0$ the vector field $q(u)_1:G_1\to TG_1$ defined by \eqref{eq:3.2} preserves composition of arrows:
\begin{equation}\label{eq:3.3}
q(u)_1 (\gamma_2 \ast \gamma_1) 
= \left(q(u)_1 (\gamma_2) \right)\star \left(q(u)_1 (\gamma_1)\right).
\end{equation}  
for all composable arrows $(\gamma_2, \gamma_1)\in G_1
\times_{G_0}G_1$. Here as before $\ast$ is the composition in the Lie
groupoid $G$ and $\star $ is the composition in the tangent groupoid
$TG$. 
\end{lemma}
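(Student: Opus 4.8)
The plan is to recognize $q(u)_1$ as the left-invariant vector field on the Lie group $G_1$ determined by a single tangent vector, and then to exploit the compatibility of the composition $\ast$ with left translations which is already encoded in equation~\eqref{eq:1} of Lemma~\ref{lem:2.3}.

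\textbf{Step 1: a left-invariant vector field.} First I would set $\xi:=u(e_0)\in T_{e_0}G_0$ and $\eta:=T1(\xi)\in T_{e_1}G_1$, so that by \eqref{eq:3.2} we have $q(u)_1(\gamma)=T\cL_\gamma(\eta)$ for every $\gamma\in G_1$; in other words $q(u)_1$ is the left-invariant vector field on the Lie group $G_1$ with $q(u)_1(e_1)=\eta$. Since $s\circ 1=\id_{G_0}=t\circ 1$, applying the tangent functor gives $Ts(\eta)=\xi=Tt(\eta)$, so the pair $(\eta,\eta)$ lies in $T_{(e_1,e_1)}G_2$. Here we use that the fibre product $G_2=G_1\times_{s,G_0,t}G_1$ is transverse, so that $TG_2$ is identified with $TG_1\times_{TG_0}TG_1$ and, accordingly, $\star$ with $T\ast$.

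\textbf{Step 2: intertwining left translations.} Next I would record the identity of smooth maps $G_2\to G_1$: for any composable pair $(\gamma_2,\gamma_1)\in G_2$,
\[
\ast\circ \cL_{(\gamma_2,\gamma_1)}=\cL_{\gamma_2\ast\gamma_1}\circ \ast,
\]
where $\cL_{(\gamma_2,\gamma_1)}:G_2\to G_2$ is left translation in the Lie group $G_2$, that is $(\mu_2,\mu_1)\mapsto(\gamma_2\cdot\mu_2,\gamma_1\cdot\mu_1)$. This is a direct transcription of \eqref{eq:1}:
\[
\ast\bigl(\gamma_2\cdot\mu_2,\gamma_1\cdot\mu_1\bigr)=(\gamma_2\cdot\mu_2)\ast(\gamma_1\cdot\mu_1)=(\gamma_2\ast\gamma_1)\cdot(\mu_2\ast\mu_1)=\cL_{\gamma_2\ast\gamma_1}\bigl(\ast(\mu_2,\mu_1)\bigr).
\]
Applying $T$ to this identity yields $T\ast\circ T\cL_{(\gamma_2,\gamma_1)}=T\cL_{\gamma_2\ast\gamma_1}\circ T\ast$ on $TG_2$.

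\textbf{Step 3: evaluate at $(\eta,\eta)$ and conclude.} Finally I would evaluate both sides at $(\eta,\eta)\in T_{(e_1,e_1)}G_2$. On one side, $T\cL_{(\gamma_2,\gamma_1)}(\eta,\eta)=(T\cL_{\gamma_2}(\eta),T\cL_{\gamma_1}(\eta))=(q(u)_1(\gamma_2),q(u)_1(\gamma_1))$, so the left-hand side gives $q(u)_1(\gamma_2)\star q(u)_1(\gamma_1)$. On the other side, the composite $G_0\to G_2\to G_1$ sending $x\mapsto(1_x,1_x)\mapsto 1_x\ast 1_x=1_x$ is exactly the unit map $1:G_0\to G_1$; applying $T$ at $e_0$ gives $T\ast(\eta,\eta)=T1(\xi)=\eta$, so the right-hand side gives $T\cL_{\gamma_2\ast\gamma_1}(\eta)=q(u)_1(\gamma_2\ast\gamma_1)$. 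Comparing the two yields \eqref{eq:3.3}. The only point that requires care — but is not deep — is bookkeeping which group's multiplication and left translation is in play at each stage ($G_0$, $G_1$, or $G_2$) and the verification that $(\eta,\eta)$ genuinely lies in $TG_2$; the substance of the lemma is contained in \eqref{eq:1} together with the computation $T\ast(\eta,\eta)=\eta$.
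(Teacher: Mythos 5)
Your proof is correct and uses the same ingredients as the paper's: the interchange law of Lemma~\ref{lem:2.3} (equation~\eqref{eq:1}), the observation that $q(u)_1(\gamma)=T\cL_\gamma\bigl(T1(u(e_0))\bigr)$ is left-invariant, and the identity $T\ast\bigl(T1(u(e_0)),T1(u(e_0))\bigr)=T1(u(e_0))$ coming from $1_x\ast 1_x=1_x$. The only difference is bookkeeping: the paper differentiates the interchange law itself, computing inside the tangent 2-group $TG$ with the zero vectors $(\gamma_2,0)$, $(\gamma_1,0)$, whereas you first record the map-level consequence $\ast\circ\cL_{(\gamma_2,\gamma_1)}=\cL_{\gamma_2\ast\gamma_1}\circ\ast$ on $G_2$ (i.e.\ that the homomorphism $\ast$ intertwines left translations) and then apply $T$ once and evaluate at a single tangent vector, which is the same computation in a slightly different order.
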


\begin{remark}
  For a manifold $M$ and a point $q\in M$ we write $(q,X)$ for the
  tangent vector $X \in T_qM$.
With this notation it is easy to see that 
\[
T\cL_\gamma (\sigma, X) = Tm ((\gamma, 0), (\sigma, X))
\]
for all $\gamma, \sigma\in G_1$, $X\in T_\sigma G_1$.  Note also that
since the composition 
$\star  = T\ast: T(G_1\times_{G_0}G_1)\to TG_1 $
is fiberwise linear,
\[
 (\gamma_2, 0)\star (\gamma_1, 0) = (\gamma_2 \ast \gamma_1, 0).
\]
\end{remark}

\begin{proof}[Proof of Lemma~\ref{lem:3.1}]   
  Recall that the tangent functor $T:\Man\to \Man$ extends to a
  2-functor $T:\LieGpd\to \LieGpd$ on the 2-category of Lie
  groupoids. As a special case (any Lie group is a Lie groupoid with
  one object) the functor $T$ induces a functor on the category
  $\LieGp$ of Lie groups.  Consequently for a Lie 2-group $G$ its
  tangent groupoid $TG$ is a Lie 2-group as well.  The unit map of the
  groupoid $TG$ is the derivative $T1$ of the unit map $1:G_0\to G_1$.
  The interchange law (see Lemma~\ref{lem:2.3}) in the case of $TG$
  reads:
\begin{equation} \label{eq:3.2}
Tm ((\mu_2\star \mu_1), (\nu_2 \star \nu_1)) =
Tm (\mu_2, \nu_2) \star Tm(\mu_1, \nu_1))
\end{equation}
for all composable pairs $(\mu_2, \mu_1), (\nu_2, \nu_1)\in TG_1
\times _{TG_0}TG_1$.  Now take $\nu_2 = \nu_1 = T1(u(e_0))$ which
we abbreviate as $\bone$.  Then \eqref{eq:3.2} reads:
  \begin{equation}\label{eq:3.3}
Tm ((\mu_2 \star
  \mu_1), (\bone \star \bone))=
(Tm(\mu_2, \bone))\star (Tm(\mu_1, \bone)).
\end{equation}

Then by definition of $q(u)_1$, for any $\gamma\in G_1$
\[
q(u)_1 (\gamma) = T\cL_\gamma \,\bone =
Tm ((\gamma, 0),  \bone).
\] 
Therefore
\begin{eqnarray*}
q(u)_1\,(\gamma_2 \ast \gamma_1) &=& T\cL_{\gamma_2 \ast\gamma_1} (\bone)\\
 &=& Tm ((\gamma_2\ast \gamma_1, 0), \bone)\\
 &=& Tm ((\gamma_2,0)\star (\gamma_1, 0), \bone\star \bone)\\
&=& Tm ((\gamma_2,0), \bone)\star Tm ((\gamma_1, 0), \bone)
\qquad\textrm{ by \eqref{eq:3.3}}\\
&=& T\cL_{\gamma_2}(\bone)\star T\cL_{\gamma_1}(\bone)\\
&=&\left( q({u})_1 (\gamma_2)\right) \star \left(q({u})_1(\gamma_1)\right).
\end{eqnarray*}
\end{proof}
It is easy to see that $Ts\circ q(u)_1 = q(u)_0 \circ s$, $Tt \circ
q(u)_1 = q(u)_0 \circ t$ and $q(u)_1\circ 1 = T1\circ q(u)_0$.  We
conclude that $q(u) = (q(u)_0, q(u)_1):G\to TG$ is a multiplicative
vector field for any left-invariant vector field $u$ on the Lie group
$G_0$.  We thus have constructed the functor $q$ on objects.

An arrow $v\xrightarrow{\alpha} u$ in $\scrL(G)$ is a vector field
$\alpha: G_1\to TG_1$ which is source map $s$ related to $v$ and
target map $t$ related to $u$.  Define $q(\alpha):G_0 \to TG_1$ by
\begin{equation}
q(\alpha) = \alpha \circ 1,
\end{equation}
where as before $1:G_0\to G_1$ is the unit map.  We need to check that
$q(\alpha)$ is an arrow in the category $\X(G)$ from $q(v)$ to $q(u)$.
That is, we need to check that $q(\alpha)$ is a natural transformation
from $q(v)$ to $q(u)$ with
\begin{equation} \label{eq:3.4}
\pi_G (q(\alpha)\,(x)) = 1_x
\end{equation}
for all $x\in G_0$.  

Checking that \eqref{eq:3.4} holds is easy. Since $\alpha$ is a vector
field on $G_1$,
\[
\pi_{G_1} (\alpha (\gamma)) = \gamma
\]
for all $\gamma\in G_1$. In particular $ \pi_{G_1} (\alpha (1_x)) =
1_x$ for all $x\in G_0$, which implies \eqref{eq:3.4}.

We now check that $q(\alpha)$ is in fact a natural transformation from the
functor $q(v)$ to the functor $q(u)$.  Since $\alpha$ is $s$-related
to $v$
\[
Ts (q(\alpha)\,(x)) = Ts (\alpha (1_x)) =v_0 (s(1_x)) = v_0 (x)
\]
for all $x\in G_0$.  Since $q(v)_0 = v_0$ it follows that the source
of the putative natural transformation $q(\alpha):G_0 \to TG_1$ is
$q(v)$.  Similarly the target of $q(\alpha)$ is $q(u)$.  It remains to
check that $q(\alpha)$ is actually a natural transformation: that is, for any
arrow $x\xrightarrow{\gamma} y$ in $G$, the diagram
\[
\xy
(-15,10)*+{q(v)(x)}="1";
(15,10)*+{q(v)(y)}="2";
( -15,-10)*+{q(u)(x)}="3";
(15,-10)*+{q(u)(y)}="4";
{\ar@{->}^{q(v)(\gamma)} "1";"2"};
{\ar@{->}_{q(\alpha)(x)} "1";"3"};
{\ar@{->}^{q(\alpha)(y)} "2";"4"};
{\ar@{->}_{q(u)(\gamma)} "3";"4"};
\endxy
\]
commutes in the category $TG$, i.e.,
\begin{equation} \label{eq:3.5}
q(\alpha)(y)\star q(v)(\gamma) = q(y)(\gamma)\star q(\alpha) (x).
\end{equation}
By definition of  $q(\alpha)$,
\[
q(\alpha) (y)  =\alpha (1_y).
\]
Since $\alpha$ is left-invariant
\[
\alpha (1_y) = T\cL_{1_y}(\alpha (e_1)) = 
   Tm ((1_y, 0), (1_{e_0}, \alpha (1_{e_0}))).
\]
Similarly
\[
q(\alpha) (x) = Tm ((1_x, 0), (1_{e_0}, \alpha (1_{e_0}))).
\]
On the other hand,
\[
q(v)(\gamma) = T\cL_\gamma (\bone _{v_0(e_0)}) = 
Tm ((\gamma, 0), (1_{e_0}, \bone _{v_0(e_0)})).
\]
Similarly,
\[
q(u)(\gamma) = 
Tm ((\gamma, 0), (1_{e_0}, \bone _{u_0(e_0)})).
\]
Now
\begin{eqnarray*}
q(\alpha)(y) \star p(v) (\gamma)
&=& Tm ((1_y,0), (1_{e_0}, \alpha(1_{e_0})))\star
Tm ((\gamma, 0), ((1_{e_0}, v_1(1_{e_0})))\\
&=& Tm ((1_y,0)\star(\gamma, 0), 
(1_{e_0}, \alpha(1_{e_0}))\star (1_{e_0},  \bone_{v_0(e_0)}))\\
&=& Tm ((1_y\ast\gamma, 0), (1_{e_0}, \alpha(1_{e_0}))) \\
&= & Tm( (\gamma, 0),(1_{e_0}, \alpha(1_{e_0}))).
\end{eqnarray*}
Similarly,
\begin{eqnarray*}
 q(u) (\gamma)\star q(\alpha)(x) 
&=& Tm ((\gamma, 0), (1_{e_0}, \bone_{u(1_{e_0})}))\bullet
Tm ((1_x,0), (1_{e_0}, \alpha(1_{e_0})))
\\
&=& Tm ((\gamma, 0)\star (1_x,0), 
( 1_{e_0}, \bone_{u(1_{e_0})})) \star (1_{e_0}, \alpha(1_{e_0}))\\
&= & Tm( (\gamma, 0),(1_{e_0}, \alpha(1_{e_0}))).
\end{eqnarray*}
It follows that \eqref{eq:3.5} holds.  Hence $q(\alpha)$ is an arrow
in $\X(G)$ from $q(v)$ to $q(u)$.

It is not hard to check that the map $q: \scrL(G) \to \X(G)$
constructed above is in fact a functor.  We need to check that $q$ is
a map of Lie 2-algebras.  For this it suffices to check that $q_0:
\scrL(G)_0 \to \X(G)_0$ and $q_1: \scrL(G)_1 \to \X(G)_1$ are Lie
algebra maps.

Recall that the Lie bracket of two multiplicative vector fields
$X=(X_0, X_1)$ and $Y= (Y_0, Y_1)$ is given by
\[
[X,Y]:= ([X_0, Y_0], [X_1, Y_1]).
\]
It follow from the definition of the functor $q$ on objects that for
any two vector fields $u,v\in \scrL(G)_0$
\begin{itemize}
\item[(i)] $[q(u)_0, q(v)_0] = q([u,v])_0$ and
\item[(ii)] $q([u,v])_1$ is the unique left-invariant vector field on
  $G_1$ which is 1-related to $[u,v]$.  Hence
\[
q([u,v])_1 = [q(u)_1, q(v)_1].
\]
\end{itemize}
We conclude that 
\[
q:\scrL(G)_0 \to \X(G)_0
\]
is a map of Lie algebras.

We next check that $q:\scrL(G)_1 \to \X(G)_1$ is also a map of Lie
algebras. Recall the construction of a Lie algebra structure on the
space $\X(G)_1$ starts with the injective linear map $j: \Gamma(A_G)
\to \calX(K_1)$ that maps the section of the Lie algebroid $A_G\to
G_0$ to the corresponding right-invariant vector field (see
\eqref{eq:j}).  We then embed $\X(G)_1$ into the space of vector
fields $\calX(G_1)$ by the map $J$ (see \eqref{eq:J}) and give
$\X(G)_1$ the induced Lie algebra structure: for $\alpha, \beta\in
\X(G)_1$ their bracket $[\alpha, \beta]$ is the unique element of the
vector space $\X(G)_1$ with
\[
J([\alpha, \beta]) = [J(\alpha), J(\beta)].
\]

\begin{lemma}
(We use the notation developed above.)
For any left-invariant vector field $\alpha \in \scrL(G)\equiv \calX (G_1)^{G_1}$
\[
J(q(\alpha)) = \alpha.
\]
Hence $q:\scrL(G)_1 \to \X(G)_1$ is a Lie algebra map.
\end{lemma}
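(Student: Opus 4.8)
The plan is to unwind the definition \eqref{eq:J} of $J$ applied to $q(\alpha)$ and to check term by term that it returns $\alpha$. Recall from the construction of $q$ that $q(\alpha)$ is an arrow of $\X(G)$ from $q(v)$ to $q(u)$, where $v,u\in\calX(G_0)^{G_0}$ are the left-invariant vector fields to which $\alpha$ is $s$- and $t$-related; hence $\bbm{s}(q(\alpha))=q(v)$, so in particular $\bbm{s}(q(\alpha))_1=q(v)_1$. Therefore $J(q(\alpha)) = j(\sigma) + q(v)_1$, where $\sigma := q(\alpha)-\bbm{1}_{q(v)}$ is the section of the Lie algebroid $A_G\to G_0$ with $\sigma(x) = \alpha(1_x) - T1\bigl(v(x)\bigr)\in T_{1_x}G_1$. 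So everything reduces to proving $j(\sigma)(\gamma) = \alpha(\gamma) - q(v)_1(\gamma)$ for every $\gamma\in G_1$.

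First I would record that $\sigma$ behaves like a left-invariant vector field along the units. Left-invariance of $\alpha$ on $G_1$ gives $\alpha(1_b) = T\cL_{1_b}\bigl(\alpha(e_1)\bigr)$; left-invariance of $v$ on $G_0$ together with the fact that $1:G_0\to G_1$ is a group homomorphism (so that $1\circ\cL_b = \cL_{1_b}\circ 1$, hence $T1\circ T\cL_b = T\cL_{1_b}\circ T1$) gives $T1\bigl(v(b)\bigr) = T\cL_{1_b}\bigl(T1(v(e_0))\bigr)$; subtracting, $\sigma(b) = T\cL_{1_b}\bigl(\sigma(e_0)\bigr)$ for every object $b$, where $\sigma(e_0) = \alpha(e_1) - T1(v(e_0))\in T_{e_1}G_1$ and $\cL$ denotes left multiplication in the Lie group $G_1$ (we use $e_1 = 1_{e_0}$).

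The key step is the computation of $j(\sigma)(\gamma) = TR_\gamma\bigl(\sigma(t(\gamma))\bigr)$. Put $b = t(\gamma)$. By Lemma~\ref{lem:1}, for $\mu\in s\inv(b)$ we have $R_\gamma(\mu) = \mu\ast\gamma = (\gamma\cdot 1_{b\inv})\cdot\mu = \cL_{\gamma\cdot 1_{b\inv}}(\mu)$, so $R_\gamma$ is the restriction to $s\inv(b)$ of the left translation $\cL_{\gamma\cdot 1_{b\inv}}$ of the group $G_1$; hence $TR_\gamma = T\cL_{\gamma\cdot 1_{b\inv}}$ on the relevant tangent spaces. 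Combining this with $\sigma(b) = T\cL_{1_b}(\sigma(e_0))$ and with $(\gamma\cdot 1_{b\inv})\cdot 1_b = \gamma\cdot 1_{e_0} = \gamma$, I obtain
\begin{align*}
j(\sigma)(\gamma) &= T\cL_{\gamma\cdot 1_{b\inv}}\,T\cL_{1_b}\bigl(\sigma(e_0)\bigr) = T\cL_\gamma\bigl(\sigma(e_0)\bigr)\\
&= T\cL_\gamma\bigl(\alpha(e_1)\bigr) - T\cL_\gamma\bigl(T1(v(e_0))\bigr) = \alpha(\gamma) - q(v)_1(\gamma),
\end{align*}
the last equality by left-invariance of $\alpha$ and by the defining formula $q(v)_1(\gamma) = (T\cL_\gamma\circ T1)(v(e_0))$. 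Plugging into $J(q(\alpha)) = j(\sigma)+q(v)_1$ gives $J(q(\alpha))(\gamma) = \alpha(\gamma)$ for all $\gamma$, i.e. $J(q(\alpha)) = \alpha$.

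For the last assertion, recall that $J:\X(G)_1\to\calX(G_1)$ is injective and that the bracket on $\X(G)_1$ is by definition the unique one making $J$ a Lie algebra embedding. Given $\alpha,\beta\in\scrL(G)_1$, the bracket $[\alpha,\beta]$ computed in $\calX(G_1)$ is again left-invariant, hence lies in $\scrL(G)_1$; applying the identity just proved to it gives $J\bigl(q([\alpha,\beta])\bigr) = [\alpha,\beta] = [J(q(\alpha)),J(q(\beta))] = J\bigl([q(\alpha),q(\beta)]\bigr)$, and injectivity of $J$ forces $q([\alpha,\beta]) = [q(\alpha),q(\beta)]$, so $q:\scrL(G)_1\to\X(G)_1$ is a Lie algebra map. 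The main obstacle is the middle step: one must recognize, via Lemma~\ref{lem:1}, that the right translation $R_\gamma$ is the restriction of a left translation in the group $G_1$, and then chain together the three distinct ``left-invariance'' facts — on $G_0$, on $G_1$, and compatibility of the two under the homomorphism $1$ — so that the composite translations collapse to $\cL_\gamma$.
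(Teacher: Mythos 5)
Your proof is correct and follows essentially the same route as the paper: both unwind $J(q(\alpha))$ via the algebroid section $q(\alpha)-\bbm{1}_{\bbm{s}(q(\alpha))}$, use Lemma~\ref{lem:1} to identify $R_\gamma$ on the source fiber with the left translation $\cL_{\gamma\cdot 1_{t(\gamma)\inv}}$, and invoke left-invariance together with the homomorphism property of $1:G_0\to G_1$ to collapse the translations to $\cL_\gamma$, after which the bracket statement follows from injectivity of $J$ exactly as in the paper. Your reorganization (establishing $\sigma(b)=T\cL_{1_b}\sigma(e_0)$ first, then composing translations) is only a cosmetic variant of the paper's cancellation via $\cL_{\gamma\cdot(1_b)\inv}\circ 1\circ\cL_b=\cL_\gamma\circ 1$.
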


\begin{proof} Since $G$ is a Lie 2-group, for any $(\sigma, \gamma)\in
  G_1\times_{G_0}G_1$
\[
R_\gamma (\sigma) = \sigma * \gamma = \gamma \cdot (1_{t(\gamma)})\inv\cdot \sigma
\]
by Lemma~\ref{lem:1}.  Therefore for any curve $\sigma (\tau)$ in
$G_1$ lying entirely in a fiber of the source map $s:G_1\to G_0$ with
$\sigma (0) = 1_y $ for some $y\in G_0$
\[
TR\gamma (\dot{\sigma}(0)) = \left.\frac{d}{d\tau}\right|_0 \sigma
(\tau)\ast \gamma = \left.\frac{d}{dt}\right|_0 \gamma \cdot (1_y)\inv
\cdot \sigma (\tau)=
T\cL_{\gamma \cdot (1_y)\inv} (\dot{\sigma}(0)).
\]
It follows that for any section $\zeta \in \Gamma (A_G)$ of the
algebroid, $j(\zeta ) \in \calX (G_1)$ is given by
\begin{equation}\label{eq:3.6}
j(\zeta) \, (\gamma) = T\cL_{\gamma \cdot (1_{t(\gamma)})\inv} (\zeta (t(\gamma))).
\end{equation}
Now, for any arrow $\alpha: u\to v$ in $\scrL(G)_1$, 
\[
q(\alpha) = \alpha \circ 1: q(u)\Rightarrow q(v).
\] 
Hence for any arrow $y\xrightarrow{\gamma}x$ in the Lie groupoid $G$
\begin{eqnarray*}
  J(q(\alpha))\,(\gamma) &=& j(q(\alpha)-\bbm{1}_{q(u)}) + q(u)_1 (\gamma)\\
  &=& T\cL_{\gamma \cdot(1_y)\inv} (\alpha (1_y) -T1 (u(y))) 
  + T\cL_\gamma (q(u)_1 (e_1)) \\
  &&\qquad (\textrm{ by \eqref{eq:3.6} and left-invariance of }q(u)_1)\\
  &=& \alpha (\gamma\cdot (1_y)\inv \cdot 1_y) - 
  (T\cL_{\gamma \cdot(1_y)\inv}\circ  T1 \circ T\cL_y)\, u(e_0) 
+ T\cL_\gamma (T1 (u(e_0))).
\end{eqnarray*}
Now for any $z\in G_0$
\[
(\cL_{\gamma \cdot(1_y)\inv} \circ 1 \circ \cL_y)\, (z) = 
\gamma \cdot (1_y)\inv \cdot 1_{yz}= \gamma \cdot  (1_y)\inv  \cdot 1_y \cdot 1_z,
\]
where the last equality holds since $1:G_0\to G_1$ is a homomorphism.  Hence
\[
\cL_{\gamma \cdot(1_y)\inv} \circ 1 \circ \cL_y = \cL_\gamma \circ 1.
\]
and consequently
\[
 T\cL_{\gamma \cdot(1_y)\inv}\circ  T1 \circ T\cL_y =  T\cL_\gamma \circ T1.
\] 
It follows that 
\[
J(q(\alpha)) \, (\gamma) = \alpha (\gamma)
\]
for all $\gamma \in G_1$ and all $\alpha \in \scrL (G)_1$.  

Now by definition of the bracket on the vector space $\X(G)_1$, for
any $\alpha, \beta \in \scrL(G)_1$ the bracket $[q(\alpha), q(\beta)]$
is the unique element of $\X(G)_1$ such that
\[
J ([q(\alpha), q(\beta)]) = [J(q(\alpha)), J(q(\beta))].
\]
On the other hand
\[
J (q [\alpha, \beta])) = [\alpha, \beta] = [J(q(\alpha)), J(q(\beta))]
\]
as well.  Hence,
\[
q([\alpha, \beta]) = [q(\alpha), q(\beta)]
\]
for all $\alpha,\beta\in \cL(G)_1$.
\end{proof}
We conclude that the functor 
\[
q: \scrL(G) \to \X(G)
\]
from the category $\scrL(G)$ of left-invariant vector fields on the
Lie 2-group $G$ to the category $\X(G)$ of multiplicative vector
fields on the Lie groupoid $G$ is a 1-morphism of Lie 2-algebras.  By
construction $q$ is fully faithful and is injective on objects.  
We now define $p:\fg\to \X(G)$ to be the composite
\[
p := q\circ \ell.
\]
By construction $p$ is fully faithful and injective on objects.

%\newpage

\section{Universal properties of the inclusion $i: p(\fg)
  \hookrightarrow \X(G)$ }
\label{sec:u}

As before $G$ denotes a Lie 2-group and $\X(G)$ the Lie 2-algebra of
multiplicative vector fields on the Lie groupoid $G$.

\begin{lemma} \label{lem:4.1} A multiplicative vector field $u =(u_0,
  u_1):G\to TG$ on a Lie 2-group $G$ satisfies
\begin{equation}
\lambda(\gamma)(u) = 1_u
\end{equation}
for an arrow $\gamma$ of $G$ % (as before $1_u : u\to u$ denotes the
% identity arrow in the category $\X(G)$)
if and only if for all $z\in
G_0$
\begin{equation} \label{eq:4.4}
  u_1(1_z) = T\cL_\gamma(u_1 (\cL_{\gamma\inv}(1_z))).
\end{equation}
As before $\cL_\sigma:G_1\to G_1$ denotes the multiplication on the
left by $\sigma\in G_1$ and $\lambda:G\to \mathrm{GL}(\X(G))$ is the
action of $G$ on its vector fields induced by left multiplication (see
Lemma~\ref{lem:3.13}).
\end{lemma}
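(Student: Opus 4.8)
\emph{Proof plan.} Write $\gamma$ as an arrow $x\xrightarrow{\gamma}y$ of $G$, and recall from the proof of Lemma~\ref{lem:3.1} that $TG$ is itself a Lie 2-group, with multiplication $Tm$, composition $\star=T\ast$, and unit map $T1$. The plan is to turn the 2-cell $\lambda(\gamma)(u)$ into an explicit formula, by unwinding the whiskering description of Lemma~\ref{lem:3.13} and then collapsing the result with the interchange law \eqref{eq:1} applied to $TG$. The target is the identity
\[
\lambda(\gamma)(u)(z)\;=\;T\cL_\gamma\bigl(u_1(\gamma\inv\cdot 1_z)\bigr),\qquad z\in G_0 ,
\]
after which the lemma is immediate: comparing with $1_u(z)=T1(u_0(z))=u_1(1_z)$, the equation $\lambda(\gamma)(u)=1_u$ holds componentwise if and only if $T\cL_\gamma\bigl(u_1(\gamma\inv\cdot 1_z)\bigr)=u_1(1_z)$ for all $z$, which is exactly \eqref{eq:4.4} since $\cL_{\gamma\inv}(1_z)=\gamma\inv\cdot 1_z$.

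To produce the formula I would evaluate $\lambda(\gamma)(u)$ at an object $z\in G_0$. By Lemma~\ref{lem:3.13} the 2-cell $\lambda(\gamma)(u)=TL_\gamma\cdot u\cdot L_{\gamma\inv}$, going from $\lambda(x)(u)=TL_x\circ u\circ L_{x\inv}$ to $\lambda(y)(u)=TL_y\circ u\circ L_{y\inv}$, can be written as the vertical composite $\bigl(TL_\gamma\cdot(u\circ L_{y\inv})\bigr)\circ_v\bigl((TL_x\circ u)\cdot L_{\gamma\inv}\bigr)$. Using $L_{\gamma\inv}(z)=\gamma\inv\cdot 1_z$ (Corollary~\ref{lem:2.2.0}), the fact that $TL_x$ acts on arrows of $TG$ as $T\cL_{1_x}$, and the fact that $TL_\gamma$, regarded as a map $TG_0\to TG_1$, equals $T\cL_\gamma\circ T1$, the value of this composite at $z$ comes out as
\[
\lambda(\gamma)(u)(z)\;=\;\bigl(T\cL_\gamma\circ T1\bigr)\bigl(u_0(y\inv z)\bigr)\;\star\;T\cL_{1_x}\bigl(u_1(\gamma\inv\cdot 1_z)\bigr).
\]

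Next I would express both $\star$-factors through $Tm$. Since $u$ is a functor it preserves units, so $T1\bigl(u_0(w)\bigr)=u_1(1_w)$; combining this with the identities $T\cL_\sigma(q,X)=Tm\bigl((\sigma,0),(q,X)\bigr)$ and $(\sigma_2,0)\star(\sigma_1,0)=(\sigma_2\ast\sigma_1,0)$ recorded just before the proof of Lemma~\ref{lem:3.1}, the first factor becomes $Tm\bigl((\gamma,0),u_1(1_{y\inv z})\bigr)$ and the second $Tm\bigl((1_x,0),u_1(\gamma\inv\cdot 1_z)\bigr)$. Applying the interchange law \eqref{eq:1} to the Lie 2-group $TG$ fuses the $\star$-product of these into
\[
Tm\Bigl((\gamma,0)\star(1_x,0),\;\;u_1(1_{y\inv z})\star u_1(\gamma\inv\cdot 1_z)\Bigr)\;=\;Tm\bigl((\gamma,0),\,u_1(\gamma\inv\cdot 1_z)\bigr),
\]
where the last step uses $\gamma\ast 1_x=\gamma$, the fact that $u_1$ preserves composition, and the unit law $1_{y\inv z}\ast(\gamma\inv\cdot 1_z)=\gamma\inv\cdot 1_z$. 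This is the asserted formula. I would also observe that applying $Ts$ and $Tt$ to \eqref{eq:4.4} yields $(\lambda(x)(u))_0=u_0=(\lambda(y)(u))_0$, so that when \eqref{eq:4.4} holds the 2-cell $\lambda(\gamma)(u)$ really is an endomorphism of $u$ in $\X(G)$ and the equation $\lambda(\gamma)(u)=1_u$ is meaningful.

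The difficulty I anticipate is purely organizational: carrying out the three-layer whiskering $TL_\gamma\cdot u\cdot L_{\gamma\inv}$ without index errors, being careful that $\gamma\inv$ is the inverse in the Lie \emph{group} $G_1$ and not the groupoid inverse, tracking the source and target objects that appear at each stage, and spotting the single application of the $TG$-interchange law that performs the collapse. Everything else is routine bookkeeping with functoriality and the unit laws in $G$ and $TG$.
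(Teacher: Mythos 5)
Your proposal is correct and follows essentially the same route as the paper: evaluate the whiskered 2-cell $\lambda(\gamma)(u)$ at an object $z$ via the horizontal-composition formula of Remark~\ref{rmrk:3.7.1}, rewrite both $\star$-factors through $Tm$, collapse them using functoriality of $Tm$ (the interchange law in the Lie 2-group $TG$) together with the unit laws and functoriality of $u$, and identify the result with $T\cL_\gamma(u_1(\cL_{\gamma\inv}(1_z)))$, comparing against $1_u(z)=T1(u_0(z))=u_1(1_z)$. The only additions beyond the paper's argument are cosmetic (the explicit vertical-composite decomposition of the whiskering and the source/target sanity check), so no further changes are needed.
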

\begin{proof}
  The proof is a computation. 

Recall that for an arrow $x\xrightarrow{\gamma}y\in G_1$ the
$u$-component of the natural transformation
$\lambda(\gamma):\lambda(x)\Rightarrow \lambda(y)$ is defined to be
the composite
\[
\xy
(-45,0)*+{TG}="1";
(-15, -0)*+{TG}="2";
( 15, 0)*+{G}="3";
{\ar@/^1.pc/^{TL_y} "2";"1"};
{\ar@/_1.pc/_{TL_x} "2";"1"};
{\ar@/^1.pc/^{uL_{y\inv}} "3";"2"};
{\ar@/_1.pc/_{u L_{x\inv}} "3";"2"};
{\ar@{=>}^<<<{\scriptstyle uL_\gamma} (0, 2.5)*{};(0, -2.5)*{}};
{\ar@{=>}^<<<{\scriptstyle TL_\gamma} (-30, 2.5)*{};(-30, -2.5)*{}};
\endxy
\]
Hence for any object $z\in G_0$
\begin{eqnarray*}
  \left(\lambda(\gamma) (u)\right) (z)
  &=& \left((TL_\gamma)\circ _{hor} (uL_{\gamma\inv})\right) (z) \\
  &=& (TL_\gamma)\left((u_0 \circ L_{y\inv})(z)]\right) 
  \star (TL_x) (u_1 (L_{\gamma\inv} (z))  
\qquad \textrm{ (by Remark~\ref{rmrk:3.7.1}})
\end{eqnarray*}
where $\star$ is the composition in the Lie groupoid $TG$.
For any tangent vector $\dot{a}\in T_a G_0$
\[
TL_\gamma (a,\dot{a}) = Tm ((\gamma, 0), (1_a, T1(\dot{a})))
\]
Hence
\[
(TL_\gamma)\left((u_0 \circ L_{y\inv})(z) \right) = 
Tm ((\gamma, 0), (1_{y\inv z}, T1 \,u_1 (1_{y\inv z}))).
\]
For any tangent vector $\dot{\sigma}\in T_\sigma G_1$
\[
TL_x (\sigma,\dot{\sigma}) = Tm ((1_x, 0), (\sigma, \dot{\sigma})).
\]
Hence
\[
(TL_x) (u_1 (L_{\gamma\inv} (z)) 
= Tm ((1_x, 0), (\gamma\inv 1_z, u_1 \gamma\inv 1_z)).
\]
Recall that since $Tm: TG\times TG \to TG$ is a functor, for any two
pairs of composable arrows $ ((\sigma_2 , \dot{\sigma}_2)$, $(\sigma_1,
\dot{\sigma}_1)), ((\sigma_4, \dot{\sigma}_4), (\sigma_3,
\dot{\sigma}_3)) \in TG_1 \times _{TG_0}TG_1 $
\[
Tm  ((\sigma_2 , \dot{\sigma}_2)\star (\sigma_1,
\dot{\sigma}_1)), ((\sigma_4, \dot{\sigma}_4) \star (\sigma_3,
\dot{\sigma}_3)) =
Tm ((\sigma_2 , \dot{\sigma}_2), (\sigma_4, \dot{\sigma}_4)) \star 
Tm ((\sigma_1, \dot{\sigma}_1)) \star  (\sigma_3,\dot{\sigma}_3))
\]
Hence
\begin{eqnarray*}
\left(\lambda(\gamma) (u)\right) (z)
&=& (TL_\gamma)\left((u_0 \circ L_{y\inv})(z)]\right) 
\star (TL_x) (u_1 (L_{\gamma\inv} (z))\\
&=& Tm ((\gamma, 0), (1_{y\inv z}, T1 \,u_1 (1_{y\inv z}))) \star  
Tm ((1_x, 0), (\sigma, \dot{\sigma}))\\
&=&
Tm ((\gamma, 0)\star (1_x, 0), (1_{y\inv z}, u_1(1_{y\inv z}))\star 
(\gamma\inv 1_x, u_1 (\gamma\inv 1_x)))\\
&=& Tm ((\gamma\ast 1_x, 0), (1_{y\inv z} \ast (\gamma\inv 1_x), 
T1(u_0(y\inv z))\star u_1 (\gamma\inv 1-x)))\\
&=& Tm ((\gamma, 0), (\gamma \inv 1_z, u_1 (\gamma\inv 1_z)).
\end{eqnarray*}
Now, for any $\dot{\sigma}\in T_\sigma G_1$,
\[
T\cL_\gamma (\sigma, \dot{\sigma}) = Tm ((\gamma,0), (\sigma, \dot{\sigma})),
\]
where, as before $\cL_\gamma: G_1\to G_1$ is the left multiplication
by $\gamma$ and $T\cL_\gamma:TG_1\to TG_1$ is its derivative.  Therefore
\[
\left(\lambda(\gamma) (u)\right) (z) = T\cL_\gamma (u_1 (\cL_{\gamma\inv} 1_z)).
\]
Since the $z$ component of $1_u:u\Rightarrow u$ is $T1 (u_0(z))$ and
since $T1 (u_0(z)) = u_1 (1_z)$ (because $u:G\to TG$ is a functor),
the result now follows:
\[
T\cL_\gamma (u_1 (\cL_{\gamma\inv} 1_z)) = u_1 (1_z).
\]
\end{proof}

\begin{theorem} \label{thm:5.2} Let $G$ be a Lie 2-group, $\fg$ the
  associated tangent Lie 2-algebra, $p:\fg\to \X(G)$ is the map of
  2-vector spaces constructed in Theorem~\ref{thm:4.1},
  $i:p(\fg)\hookrightarrow \X(G)$ the inclusion of 2-vector spaces and
  $\lambda: G\to \Aut(\X(G))$ is the action of the Lie 2-group $G$ on
  its Lie 2-algebra of multiplicative vector fields by left
  multiplication (see Lemma~\ref{lem:3.13}).
\begin{enumerate}
\item The diagram
\begin{equation}\label{diag:4.1}
\xy
(0,15)*+{p(\fg)}="1";
(-15, -5)*+{\X(G)}="2";
( 15, -5)*+{\X(G)}="3";
{\ar@{->}_{i} "1"; "2"};
{\ar@{->}^{i} "1"; "3"};
{\ar@/^1.pc/^{\lambda(y)} "3";"2"};
{\ar@/_1.pc/_{\lambda(x)} "3";"2"};
{\ar@{=>}^<<<{\scriptstyle \lambda(\gamma)} (0,-2.5)*{};(0,-7)*{}} 
\endxy
\end{equation}
commutes for any choice of arrow $x\xrightarrow{\gamma} y\in G_1$.
That is,
\[
\lambda(x)\circ i = i
\]
for all $x\in G_0$ and 
\[
\lambda(\gamma)i =1_i
\]
for all $\gamma \in G_1$ (here $\lambda(\gamma)i$ is the whiskering of
the natural transformation $\lambda(\gamma)$ by the functor $i$).
\item For any map $\psi:\fh \to \X(G)$ of 2-vector spaces such that
  the diagram
\begin{equation}\label{diag:4.2}
\xy
(0,15)*+{\fh}="1";
(-15, -5)*+{\X(G)}="2";
( 15, -5)*+{\X(G)}="3";
{\ar@{->}_{\psi} "1"; "2"};
{\ar@{->}^{\psi} "1"; "3"};
{\ar@/^1.pc/^{\lambda(y)} "3";"2"};
{\ar@/_1.pc/_{\lambda(x)} "3";"2"};
{\ar@{=>}^<<<{\scriptstyle \lambda(\gamma)} (0,-2.5)*{};(0,-7)*{}} 
\endxy
\end{equation}
commutes for all choices of arrows $x\xrightarrow{\gamma} y\in G_1$
there exists a unique map of 2-vector  spaces $\bar{\psi}:\fh \to p(\fg)$
so that
\[
\psi = i\circ \bar{\psi}.
\]
\end{enumerate}
In other words $i:p(\fg)\to \X(G)$ is a (strict conical) limit of the
functor $\lambda: G\to \mathrm{GL}(\X(G))$.
\end{theorem}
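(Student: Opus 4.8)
The plan is to verify directly the two axioms defining a strict conical $2$-limit, most of whose computational content is already at hand. Part~(1) is largely a matter of recalling what $p(\fg)$ is: since $p=q\circ\ell$ with $\ell$ an isomorphism, $p(\fg)=q(\scrL(G))$, so an object of $p(\fg)$ is a multiplicative vector field $q(u)$ with $u$ left-invariant on $G_0$ and $q(u)_1(\gamma)=T\cL_\gamma(T1(u(e_0)))$, and an arrow of $p(\fg)$ is $q(\alpha)=\alpha\circ 1$ with $\alpha$ left-invariant on $G_1$. Because $L_x$ acts on $G_0$ as $\cL_x$ and on $G_1$ as $\cL_{1_x}$, and $1\colon G_0\to G_1$ is a homomorphism, left-invariance of $u$ yields $T\cL_x\circ u\circ \cL_{x\inv}=u$ and $q(u)_1(1_{x\inv}\cdot\gamma)=T\cL_{1_{x\inv}}(q(u)_1(\gamma))$, hence $\lambda(x)(q(u))=TL_x\circ q(u)\circ L_{x\inv}=q(u)$; the analogous computation on the arrows $q(\alpha)$ gives $\lambda(x)\circ i=i$ for all $x\in G_0$. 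For the $2$-morphism statement, by Lemma~\ref{lem:4.1} the identity $\lambda(\gamma)(q(u))=1_{q(u)}$ is equivalent to $q(u)_1(1_z)=T\cL_\gamma(q(u)_1(\cL_{\gamma\inv}(1_z)))$ for all $z\in G_0$, and this holds since $q(u)_1(\gamma\inv\cdot 1_z)=T\cL_{\gamma\inv}(q(u)_1(1_z))$. Thus $\lambda(\gamma)i=1_i$, so diagram~\eqref{diag:4.1} commutes.

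The substance of the theorem, and the step I expect to be the main obstacle, is the converse needed for part~(2): that the invariant multiplicative vector fields are \emph{exactly} the objects of $p(\fg)$. So suppose $u=(u_0,u_1)\colon G\to TG$ satisfies $\lambda(x)(u)=u$ for all $x\in G_0$ and $\lambda(\gamma)(u)=1_u$ for all $\gamma\in G_1$. Reading $\lambda(x)(u)=u$ on object components gives $T\cL_x\circ u_0\circ \cL_{x\inv}=u_0$, so $u_0$ is left-invariant on $G_0$. The crucial move is to specialize equation~\eqref{eq:4.4} of Lemma~\ref{lem:4.1} (valid for every $\gamma$) to $z=e_0$: since $1_{e_0}=e_1$ and $\cL_{\gamma\inv}(e_1)=\gamma\inv$, it reads $u_1(e_1)=T\cL_\gamma(u_1(\gamma\inv))$, and replacing $\gamma$ by $\gamma\inv$ gives $u_1(\gamma)=T\cL_\gamma(u_1(e_1))$ for every $\gamma\in G_1$; that is, $u_1$ is left-invariant on all of $G_1$. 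Since $u\colon G\to TG$ is a functor, $u_1(e_1)=u_1(1_{e_0})=T1(u_0(e_0))$, whence $u_1(\gamma)=T\cL_\gamma(T1(u_0(e_0)))=q(u_0)_1(\gamma)$; together with $q(u_0)_0=u_0$ this gives $u=q(u_0)\in p(\fg)_0$. (The same specialization, combined with the fact that $s\colon G_1\to G_0$ is a surjective homomorphism, would in fact recover left-invariance of $u_0$ without appealing separately to the $\lambda(x)$-condition.) I flag this specialization as the heart of the matter: it is what upgrades the a priori weak invariance under left translation by units to honest left-invariance of $u_1$, which is precisely what pins $u$ down inside $p(\fg)$; everything else is either already proved or formal.

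With that characterization in hand, I would assemble part~(2) as follows. Uniqueness of a factorization $\psi=i\circ\bar\psi$ is immediate since $i$ is injective on objects and faithful, hence a monomorphism of $2$-vector spaces. For existence, let $\psi\colon\fh\to\X(G)$ be a map of $2$-vector spaces for which \eqref{diag:4.2} commutes for all arrows $x\xrightarrow{\gamma}y$, i.e.\ $\lambda(x)\circ\psi=\psi$ for all $x\in G_0$ and $\lambda(\gamma)\psi=1_\psi$ for all $\gamma\in G_1$. Then for each object $h$ of $\fh$ the multiplicative vector field $\psi_0(h)$ satisfies the two invariance conditions above, so by the previous paragraph $\psi_0(h)\in p(\fg)_0$; and for each arrow $\eta\colon h\to h'$ of $\fh$, $\psi_1(\eta)$ is an arrow of $\X(G)$ between the objects $\psi_0(h),\psi_0(h')\in p(\fg)_0$, hence lies in $p(\fg)_1$ because $i$ is fully faithful. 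Setting $\bar\psi_0:=\psi_0$ and $\bar\psi_1:=\psi_1$ with codomains $p(\fg)_0$ and $p(\fg)_1$, one checks routinely that $\bar\psi$ is a map of $2$-vector spaces: linearity is inherited because $p(\fg)$ carries the subspace structure, and compatibility with the structure maps $s,t,1,\circledast$ follows from $i\circ\bar\psi=\psi$ together with $i_0$ injective and $i$ itself a map of $2$-vector spaces. This shows $i\colon p(\fg)\to\X(G)$ is the strict conical limit of $\lambda\colon G\to\GL(\X(G))$, and since $p\colon\fg\to p(\fg)$ is an isomorphism of Lie $2$-algebras by Theorem~\ref{thm:4.1}, we conclude $\fg\cong\X(G)^G$.
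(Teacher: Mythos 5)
Your proposal is correct and takes essentially the same route as the paper: both rest on Lemma~\ref{lem:4.1}, using left-invariance to show $\lambda$ fixes everything in $p(\fg)$, and conversely characterizing the $\lambda$-invariant multiplicative vector fields and arrows as exactly the image of $p$ (your specialization of \eqref{eq:4.4} at $z=e_0$ is precisely what the paper compresses into the remark that \eqref{eq:4.4} is equivalent to left-invariance of $u_1$), from which the factorization in part~(2) follows. The only minor variation is that for arrows in part~(2) you invoke fullness of $p(\fg)$ in $\X(G)$, whereas the paper computes directly that $\lambda(x)\alpha=\alpha$ for all $x\in G_0$ forces $\alpha=p(\alpha(e_0))$; both are valid and of comparable length.
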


\begin{remark}
In the course of the proof we realize the limit of the functor $\lambda: G\to \mathrm{GL}(\X(G))$ explicitly as a sub 2-vector space of the 2-vector space $\X(G)$ cut out by equations.
\end{remark}

\begin{proof}
We argue first
\begin{itemize}
\item[(i)]  For any multiplicative vector field $u=(u_0, u_1):G\to TG$
\begin{equation} \label{eq:4.5}
\lambda(x)u = u \quad \textrm{ for all } x\in G_0\qquad \textrm{ and } \qquad
\lambda(\gamma)u = 1_u \quad \textrm{ for all }\gamma \in G_1
\end{equation}
if and only if
\begin{equation}\label{eq:4.6}
u = p(u(e_0)).
\end{equation}

\item[(ii)] For any morphism $\alpha:u\Rightarrow v$ in the category $\X(G)$
\begin{equation} \label{eq:4.5ii}
\lambda (x) (\alpha) = \alpha \textrm{  for all }x\in G_0
\end{equation}
if and only if 
\begin{equation}\label{eq:4.6ii}
\alpha = p(\alpha (e_0)).
\end{equation}
\end{itemize}

\begin{proof}[Proof of (i)] By Lemma~\ref{lem:4.1}
  $\lambda(\gamma))(u) = 1_u$ if and only if \eqref{eq:4.4} holds for
  all $z\in G_0$.  It is easy to see that \eqref{eq:4.4} is equivalent
  to the vector field $u_1$ on the Lie group $G_1$ being
  left-invariant.

On the other hand $\lambda(x)u = u$ for all $x\in G_0$ translates into
\begin{eqnarray*}
T\cL_x \circ u_0 \circ \cL_{x\inv} = u_0\\
\textrm{and}\\
T\cL_{1_x} \circ u_1 \circ \cL_{1_x\inv} = u_1.
\end{eqnarray*}
Thus \eqref{eq:4.5} implies that $u_0$ and $u_1$ are both
left-invariant. Moreover, since $u$ is multiplicative and $e_1 =
1(e_0)$, $u_1 (e_1) = T1 u_0(e_0)$.  Hence \eqref{eq:4.5} implies
\eqref{eq:4.6}.

Conversely, suppose $(u_0, u_1) = p(a)$ for some $a\in \fg_0$.  By
construction of the functor $p$, $a= u_0(e_0)$.  Moreover $u_1$ is a
left-invariant vector field on $G_1$ with $u_1(e_1) = T1 (u_0 (e_0))$.
Hence equation \eqref{eq:4.4} hold for all $z\in G_0$ and all $\gamma \in G_1$,
which implies that $\lambda(\gamma)u = 1_u $ for all $\gamma \in G_1$.
It also implies that 
\begin{equation}
T\cL_{1_x} \circ u_1 \circ \cL_{1_{x\inv}} = u_1
\end{equation}
for all $x\in G_1$.  On the other hand, by construction of the functor
$p$ the vector field $u_0$ on $G_0$ is left-invariant.  Hence
\begin{equation}
T\cL_{x} \circ u_0 \circ \cL_{{x\inv}} = u_0
\end{equation}
for all $x\in G_0$.  Therefore $\lambda (x) u = u$ for all $x\in G_0$.
We conclude that if $u = p(a)$ then \eqref{eq:4.5} holds.  This
finishes our proof of (i).
\end{proof}
\begin{proof}[Proof of (ii)]
By definition of the functor $\lambda(x)$, 
\[
(\lambda(x)\alpha)(z) = TL_x (\alpha (x\inv z))
\]
for all $z\in G_0$.
By definition of the functor $L_x$ on arrows,
\[
 TL_x (\alpha (x\inv z)) = T\cL_{1_z} (\alpha (x\inv z)
\]
where as before $\cL_{1_z}$ is left multiplication by $1_z\in G_1$.
Thus if $\lambda (x)\alpha = \alpha$  then
\[
\alpha(x)  = T\cL_{1_x} (\alpha (e_0))
\]
Hence \eqref{eq:4.5ii} implies \eqref{eq:4.6ii}.

Conversely,  if $\alpha = p(b)$ for some $b\in \fg_1$  then $\alpha(x) = T\cL_{1_x} b$ for all $x\in G_0$ and $\alpha (e_0)= T\cL_{1_{e_0}} b = b$.  This
finishes our proof of (ii).
\end{proof}
The proof of part (1) of the theorem is now easy.  By (i), for any
object $a\in \fg_0$, and any object $x\in G_0$,
\[
\lambda (x)(p(a)) = p(a).
\]
By (ii), for any object $b\in \fg_1$ and any arrow $\gamma \in G_1$
\[
\lambda(\gamma)(p(b)) = p(b).
\]
Hence \eqref{diag:4.1} commutes.

Now suppose $\psi: \fh\to \X(G)$ is a map of 2-vector spaces making the
diagram \eqref{diag:4.2} commute.
Then for any object $X$ of $\fh$
\[ %begin{equation} \label{eq:4.5}
\lambda(x) \psi(X)= \psi(X) \quad \textrm{ for all } x\in G_0\qquad 
\textrm{ and } \qquad
\lambda(\gamma)\psi(X) = 1_{\psi(X)} \quad \textrm{ for all }\gamma \in G_1
\] %end{equation}
Consequently by (i)
\[
\psi(X) = p (\psi(X) (e_0)).
\]
The commutativity of  \eqref{diag:4.2} also implies that 
\[ 
\lambda (x) (\psi(Y)) = \psi(Y) \textrm{  for all }x\in G_0
\]
for any arrow $Y$ in $\fh$.  Then by (ii)
\[ 
\psi(Y) = p(\psi(Y)\, (e_0)).
\] 
We conclude that the image of $\psi:\fh\to \X(G)$ is contained in $p(\fg)$ 
and the result follows.
\end{proof}
We are now in position to prove our main result by putting together
all the work we have already done.

\begin{proof}[Proof of Theorem~\ref{thm:main}]
  By Lemma~\ref{lem:3.13} the action of the Lie 2-group $G$ on itself
  by multiplication on the left gives rise to a homomorphism of
  2-groups $\lambda:G\to \mathrm{GL}(\X(G))$.  By
  Theorem~\ref{thm:4.1} we have a 1-morphism of Lie 2-algebras
  $p:\fg\to \X(G)$ which is fully faithful and injective on objects.
  In particular $p:\fg\to p(\fg)$ is an isomorphism of Lie 2-algebras.  

  On the other hand by Theorem~\ref{thm:5.2}, the 2-vector space
  $p(\fg)$ underlying the Lie 2-algebra $p(\fg)$ is a limit of the
  functor $\lambda:G\to \mathrm{GL}(\X(G))$.  Hence it makes sense to
  say that $p(\fg)$ is the 2-vector space $\X(G)^G$ of left-invariant
  vector fields.  As we remarked previously $p(\fg)$ is also a Lie
  2-subalgebra of $\X(G)$ which is isomorphic to the Lie 2-algebra
  $\fg$.
\end{proof}

\end{document}